\documentclass[a4paper,11pt,reqno]{amsart}

\usepackage{url}
\usepackage[colorlinks=true, linkcolor=blue, citecolor=ForestGreen, urlcolor=blue]{hyperref}
\usepackage{cite}

\usepackage{latexsym,amsmath,amssymb,amsfonts,amsthm}
\usepackage{mathrsfs}
\usepackage{mathtools}
\usepackage{dsfont}
\usepackage{bbm}
\usepackage{soul}
\usepackage{cancel}
\usepackage[utf8]{inputenc}

\usepackage[dvipsnames]{color}
\usepackage[dvipsnames, cmyk]{xcolor}
\usepackage{graphicx}
\usepackage{multirow}
\usepackage{enumerate}
\usepackage{enumitem}
\usepackage{subcaption}
\usepackage{comment}

\usepackage{a4wide}

\allowdisplaybreaks
\numberwithin{equation}{section}

\definecolor{grey}{rgb}{.7,.7,.7}
\definecolor{refkey}{gray}{.45}
\definecolor{labelkey}{gray}{.45}

\newtheorem{theorem}{Theorem}[section]
\newtheorem{proposition}[theorem]{Proposition}
\newtheorem{lemma}[theorem]{Lemma}

\theoremstyle{remark}
\newtheorem{remark}[theorem]{Remark}
\theoremstyle{definition}

\newtheorem{example}[theorem]{Example}

\usepackage{tikz}
\usepackage{pgf,pgfplots}
\pgfplotsset{compat=newest}
\usetikzlibrary{calc}
\usetikzlibrary{decorations.markings}
\usetikzlibrary{decorations.pathmorphing}
\usetikzlibrary{decorations.shapes}
\usetikzlibrary{shapes,arrows,shapes.geometric,patterns,fadings}
\usetikzlibrary{pgfplots.fillbetween}

\renewcommand{\bar}[1]{\overline{#1}}

\renewcommand{\div}{\operatorname{div}}

\newcommand{\Id}{\operatorname{Id}}

\let\epsilon\varepsilon

\edef\restoreparindent{\parindent=\the\parindent\relax}
\usepackage{parskip}
\restoreparindent
\usepackage{stmaryrd}

\newcommand{\lbr}{\llbracket}
\newcommand{\rbr}{\rrbracket}

\newcommand{\e}{\varepsilon}

\newcommand{\N}{\mathbb N}

\newcommand{\R}{\mathbb R}

\newcommand{\dd}{\,\mathrm{d}}

\renewcommand{\setminus}{\backslash}

\newcommand{\defeq}{\coloneqq}

\newcommand{\asymm}[2]{#1\triangle #2}

\newcommand{\ene}{\mathcal{E}_{\alpha,\beta}}
\newcommand{\enf}{\mathcal{F}_{\alpha}}
\newcommand{\eng}{\mathcal{G}_{\alpha}}

\newcommand{\nl}{\mathcal{N}_{\alpha}}
\newcommand{\disp}{\displaystyle}

\newcommand{\ba}{\begin{array}}
\newcommand{\ea}{\end{array}}
\newcommand{\trns}[1]{\widehat{#1}\,}

\newcommand{\bthm}{\begin{theorem}}
\newcommand{\ethm}{\end{theorem}}
\newcommand{\bprop}{\begin{proposition}}
\newcommand{\eprop}{\end{proposition}}
\newcommand{\blemma}{\begin{lemma}}
\newcommand{\elemma}{\end{lemma}}
\newcommand{\bexmpl}{\begin{example}}
\newcommand{\eexmpl}{\end{example}}

\newcommand{\beqn}{\begin{equation}}
\newcommand{\eeqn}{\end{equation}}
\newcommand{\beqns}{\begin{equation*}}
\newcommand{\eeqns}{\end{equation*}}

\newcommand{\pt}{\partial}

\newcommand{\Htwo}{\mathcal{H}^2}

\renewcommand{\leq}{\leqslant}
\renewcommand{\geq}{\geqslant}

\definecolor{mygreen}{rgb}{0.1,0.75,0.2}

\newcommand{\calF}{\mathcal{F}}
\newcommand{\calD}{\mathcal{D}}
\newcommand{\calP}{\mathcal{P}}
\newcommand{\calN}{\mathcal{N}}
\newcommand{\calE}{\mathcal{E}}

\newcommand{\Pe}{\mathcal{P}}

\newcounter{myenumi}
\DeclareMathOperator{\dive}{div}
\DeclareMathOperator{\Lip}{Lip}

\DeclareMathOperator{\loc}{loc}

\title[The liquid drop model with a Yukawa potential]{The liquid drop model with a Yukawa potential: existence of minimizers and sharp stability of the ball}

\author{Lia Bronsard}
\address[Lia Bronsard]{Department of Mathematics \& Statistics, McMaster University, Hamilton ON, Canada}
\email{bronsard@mcmaster.edu}

 \author{Kenneth DeMason}
 \address[Kenneth DeMason]{Department of Mathematics \& Statistics, McMaster University, Hamilton ON, Canada}
 \email{demasonk@mcmaster.edu}

\author{Ihsan Topaloglu}
\address[Ihsan Topaloglu]{Department of Mathematics and Applied Mathematics, Virginia Commonwealth University, Richmond, VA, USA}
\email{iatopaloglu@vcu.edu}

\date{\today}       

\subjclass[2020]{49Q10, 49Q20, 49K21, 70G75, 82B21, 82B24}
\keywords{Stability, Second variation, Yukawa kernel, Screened Coulomb, Global and local minimizers}

\begin{document}

\begin{abstract}
We study a long-range perturbation of the perimeter functional by a nonlocal repulsive term defined through the Yukawa kernel, minimized under a volume constraint. Because the kernel decays exponentially, the competition between surface tension and repulsion is governed by two independent quantities, the screening rate and the volume. Specifically we prove that (i) above an explicit critical screening rate, minimizers exist at every volume, whereas earlier work required the volume to be large; (ii) below an explicit volume threshold minimizers exist for every screening rate; (iii) at small volume, and uniformly in the screening rate, the ball is the unique minimizer up to translation; and (iv) there exists a sharp volume threshold, depending on the screening rate, for the ball to be a stable volume-constrained critical point. The threshold is given in closed form for every screening rate and reduces to the known unscreened value when $\alpha=0$. Our proofs overcome new technical changes due to the lack of homogeneity in the Yukawa kernel.
\end{abstract}

\maketitle



\section{Introduction}\label{sec:intro}

In this paper we are interested in the stability and global/local minimality of balls $B_r$ for the energy functional
	\[
		\enf(E) \defeq \Pe(E) + \int_E \int_E \frac{e^{-\alpha|x-y|}}{|x-y|}\dd x \dd y
	\]
in the class of sets of finite perimeter $E\subset \R^3$ such that $|E|=m$. Here $\alpha,m>0$ and $\Pe(E)$ denotes the perimeter of the set $E$ in the De Giorgi sense, see for instance \cite[Chapter 12]{Mag}. The nonlocal term is given by
	\begin{equation}\label{eqn: Nonlocal Definition}
		\nl(E) \defeq \int_E \int_E Y_\alpha(x-y) \dd x \dd y, \qquad  Y_\alpha(z) \defeq \frac{e^{-\alpha|z|}}{|z|}
	\end{equation}
where $Y_{\alpha}$ is the Yukawa kernel.

The kernel $Y_\alpha$ arises in models where a charged phase carrying an interfacial tension is immersed in a medium of mobile opposite charges. The mobile charges rearrange to cancel the charge they surround, and to first order this adds a restoring term to the electrostatic equation: the potential solves $(-\Delta+\alpha^{2})v=4\pi\rho$ rather than $-\Delta v=4\pi\rho$. Its Green function is $(4\pi)^{-1}Y_\alpha$, so pairwise interactions decay exponentially beyond the length $\ell=1/\alpha$ while remaining Coulombic below it. For example, in the inner crust of a neutron star the medium is a gas of degenerate electrons, in which protons form clusters surrounded by a dilute neutron gas. The Thomas--Fermi linearization of the electron response makes $\ell$ the electron Thomas--Fermi length, and two protons at distance $r$ then interact through $q^{2}e^{-r/\ell}/r=q^{2}Y_\alpha(r)$, with $q$ being the proton charge
(see for example \cite{CaplanHorowitz,ChugunovHorowitz}). Minimizing the associated free energy predicts the cluster shape, which as the density increases passes from near-spherical nuclei through rods, slabs, tubes and bubbles. This sequence of phases is referred to as nuclear pasta (see \cite{RPW,HSY,PR}). A second setting in which functionals of such form appear is regarding models for a charged phase in an electrolyte. There the medium consists of dissolved ions and the linearization is the Debye--H\"uckel approximation. In these models $\ell$ is the Debye length, set by the ionic strength, and it is adjustable by adding salt into the mixture. The screened repulsion obtained in this way is one of the two interactions in the classical theory of colloidal stability of Derjaguin, Landau, Verwey and Overbeek, in which it competes with a van der Waals attraction to determine whether suspended particles aggregate (see \cite{Isr,VO}).

When $\alpha=0$ the minimization problem with $Y_0(z)=|z|^{-1}$ among $E\subset \mathbb{R}^3$ with fixed mass is Gamow's liquid drop model of the atomic nucleus, and it has been long conjectured that minimizers are balls below a critical mass and that no minimizers exist above this threshold (see \cite{CMT} for a review on the topic). This conjecture has been resolved only very recently (see \cite{CG}). Screening the interaction changes this picture qualitatively, and the resulting minimization problem with $Y_{\alpha}$ is the object of very recent activity (cf. \cite{BMP,MNS}). 

The first author, Merlet, and Pegon \cite{BMP} have shown that the conjecture \emph{fails} when the interaction is screened: for suitable values of the screening parameter they show that there exists a connected minimizer in $\R^3$ which is not a ball, and balls fail to minimize at that mass. Their setting is exactly the one studied here, their Yukawa potential $Y_{1,\kappa}(x)=e^{-|x|/\kappa}/|x|$ corresponds to our $Y_\alpha$ with $\kappa=1/\alpha$, so that their functional is $\calF_{1/\kappa}$. In two dimensions, Muratov, Novaga and Simon \cite{MNS} analyze the same Yukawa profile in the large-mass limit for a certain regime of screening parameters, where the nonlocal term concentrates on the boundary and cancels the perimeter to leading order. They show that the next-order $\Gamma$-limit is a weighted sum of the perimeter and Euler's elastica functional evaluated on the system of boundary curves, and the minimizers are disks above a critical screening rate and rings below it. For a general class of integrable kernels, Pegon \cite{Peg} proves the existence of minimizers above an explicit critical screening rate, provided the mass is large enough.

In this paper we first establish the existence of minimizers of $\enf$ for any volume $m$ when the screening parameter is above a critical threshold. We follow the strategy in \cite{MNS} and extend the existence result in \cite{Peg} to the full range of $m$.
	\begin{theorem}\label{thm:exist_every_m}
		For $\alpha>(2\pi)^{1/3}$ and for any $m>0$, the problem 	
		\begin{equation}\label{eqn:energy_alpha_min}
		e_{\alpha}(m) \defeq \inf \Big\{ \enf(E) \, : \, |E|=m \Big\}
	\end{equation}
 admits a solution. Moreover $m \mapsto m^{-1}e_\alpha(m)$ is strictly decreasing.
	\end{theorem}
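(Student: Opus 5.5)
The plan is to rewrite $\enf$ so that the part left after removing the volume term is coercive, and then run a concentration--compactness argument in the style of \cite{MNS}. The strict monotonicity of $m\mapsto m^{-1}e_\alpha(m)$ will supply the strict subadditivity that rules out splitting.

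\textbf{Step 1: reformulation and coercivity.} Since $\int_{\R^3}Y_\alpha = 4\pi/\alpha^2$, I write
\[
\nl(E)=\frac{4\pi}{\alpha^2}|E|-\int_E\int_{E^c}Y_\alpha(x-y)\dd x\dd y .
\]
This gives $\enf(E)=\frac{4\pi m}{\alpha^2}+\eng(E)$, where $\eng(E)\defeq \Pe(E)-\int_E\int_{E^c}Y_\alpha$, and hence $e_\alpha(m)=\frac{4\pi m}{\alpha^2}+g_\alpha(m)$ with $g_\alpha$ the infimum of $\eng$ at volume $m$. Next I use
\[
\int_E\int_{E^c}Y_\alpha=\frac12\int Y_\alpha(z)\,|\asymm{E}{(E+z)}|\dd z
\]
together with the directional bound $|\asymm{E}{(E+z)}|\le |z|\int_{\pt^*E}|\nu_E\cdot \tfrac{z}{|z|}|\dd\Htwo$. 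Averaging over directions, where the mean of $|\nu\cdot\omega|$ over the sphere is $1/2$, I expect
\[
\int_E\int_{E^c}Y_\alpha\le \frac{1}{4}\Pe(E)\int_{\R^3}|z|\,Y_\alpha(z)\dd z=\frac{2\pi}{\alpha^3}\Pe(E).
\]
Hence $\eng(E)\ge (1-2\pi\alpha^{-3})\Pe(E)$, and this is exactly where $\alpha>(2\pi)^{1/3}$ enters. Minimizing sequences therefore have bounded perimeter, and $g_\alpha(m)>-\infty$.

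\textbf{Step 2: strict monotonicity of $m^{-1}e_\alpha(m)$.} Because $e_\alpha(m)/m=4\pi/\alpha^2+g_\alpha(m)/m$, it suffices to show that $g_\alpha(m)/m$ is strictly decreasing. I will compare $E$ with the dilate $\lambda E$, $\lambda>1$. Changing variables gives
\[
\eng(\lambda E)=\lambda^2\Pe(E)-\frac{\lambda^5}{2}\int\frac{e^{-\alpha\lambda|w|}}{|w|}\,|\asymm{E}{(E+w)}|\dd w .
\]
I then aim to prove $\eng(\lambda E)<\lambda^3\eng(E)$, uniformly enough along minimizing sequences to give $g_\alpha(\lambda^3m)\le \lambda^3 g_\alpha(m)-c(\lambda,m)$ with $c>0$. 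The natural route is to differentiate $\lambda\mapsto\lambda^{-3}\eng(\lambda E)$ and bound the derivative strictly below zero, using the Step 1 inequality to absorb the positive contributions of the nonlocal term into $-\Pe(E)$. An alternative route, if the dilation estimate is too lossy, is to split a near-optimal set with a plane, as in \cite{MNS}: a cut costs at most the area of the cut, but strictly decreases the normalized energy by the coercivity margin.

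\textbf{Step 3: existence.} Let $(E_n)$ be a minimizing sequence. By Step 1, $\Pe(E_n)$ is bounded, so the standard decomposition applies (e.g.\ the nucleation lemma, or the concentration--compactness argument of \cite{Peg}). After translations, $E_n$ splits, up to a vanishing error, into finitely or countably many pieces that converge locally in $L^1$ to limit sets $F_i$ with $\sum_i|F_i|=m$. Lower semicontinuity of $\Pe$ is combined with continuity of the cross term (the kernel is integrable, and interactions between pieces drifting apart tend to zero). Since the cross term enters $\eng$ with a minus sign, this gives $\sum_i\eng(F_i)\le g_\alpha(m)$. Vanishing is excluded because the nonlocal interaction would then be lost, which would force energy $\ge \Pe$ of a set with small local mass, contradicting the bound of the form $g_\alpha(m)<\Pe$ available for a ball. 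Dichotomy is excluded by the strict subadditivity
\[
e_\alpha(m)<\sum_i e_\alpha(m_i),
\]
which follows from Step 2 since $e_\alpha(m_i)>\frac{m_i}{m}e_\alpha(m)$. Hence a single piece carries all the mass, and it is a minimizer.

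\textbf{Main obstacle.} The hard part is Step 2, because the kernel is not homogeneous. The factor $e^{-\alpha\lambda|w|}$ in the dilation formula works against a clean scaling argument, so the gain must come quantitatively from the coercivity constant $1-2\pi\alpha^{-3}>0$.
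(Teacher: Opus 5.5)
\textbf{There is a genuine gap in Step 2.} That step carries the whole theorem. Once $m\mapsto m^{-1}e_\alpha(m)$ is strictly decreasing with a uniform gain, strict binding follows and existence is the standard compactness result, as in your Step 3. You only announce a route for Step 2, and the route you name does not reach the threshold $\alpha^3>2\pi$. Writing $h(w)\defeq|\asymm{E}{(E+w)}|$, you have
\[
\frac{\dd}{\dd\lambda}\Big[\lambda^{-3}\eng(\lambda E)\Big]=-\frac{\Pe(E)}{\lambda^2}-\lambda\int_{\R^3}\frac{e^{-\lambda\alpha|w|}}{|w|}\,h(w)\dd w+\frac{\lambda^2\alpha}{2}\int_{\R^3}e^{-\lambda\alpha|w|}\,h(w)\dd w .
\]
Suppose you absorb the last (positive) term with the Step 1 bound $h(w)\le|w|\int_{\pt^*E}\big|\nu_E\cdot\tfrac{w}{|w|}\big|\dd\Htwo$. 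That term is then bounded by $6\pi\alpha^{-3}\lambda^{-2}\Pe(E)=3\sigma_\alpha\lambda^{-2}\Pe(E)$. Step 1 gives no lower bound on $h$, so the negative middle term cannot help. The derivative is therefore negative only when $\alpha^3>6\pi$, and the range $2\pi<\alpha^3\le 6\pi$ stays open. More generally, the fixed-parameter upper bounds $\calP_\beta(E)\le 2\pi\beta^{-3}\Pe(E)$ do not control the $\lambda$-derivative of $\lambda^2\calP_{\lambda\alpha}(E)$, and that derivative is exactly what you need.

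\textbf{The missing idea} is a decomposition that pairs the two nonlocal terms. The paper gets it from the boundary representation (Proposition~\ref{prop:bd_repres}): $\calP_\alpha(E)=\sigma_\alpha\Pe(E)-\calD_\alpha(E)$ with $\calD_\alpha\ge 0$. Hence $\lambda^{-3}\eng(\lambda E)=\lambda^{-1}\theta_\alpha\Pe(E)+\lambda^2\calD_{\lambda\alpha}(E)$. The map $\lambda\mapsto\lambda^2\calD_{\lambda\alpha}(E)$ is nonincreasing because $\lambda^2\psi_{\lambda\alpha}(r)=(1+\lambda\alpha r)e^{-\lambda\alpha r}/(\alpha^2r^2)$ is (Lemma~\ref{lem:D_alpha_properties}). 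This yields $\eng(t^{1/3}E)\ge t\,\eng(E)+\theta_\alpha(t^{2/3}-t)\Pe(E)$ for $t\in(0,1)$, and the isoperimetric inequality makes the gain uniform.

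You can also close Step 2 inside your own framework. Set $H(r)\defeq\int_{\pt B_1}h(r\omega)\dd\Htwo(\omega)$. Integrating by parts in $r$ gives
\[
\frac{\dd}{\dd\lambda}\big[\lambda^2\calP_{\lambda\alpha}(E)\big]=-\frac{\lambda}{2}\int_0^\infty r^2e^{-\lambda\alpha r}H'(r)\dd r .
\]
The subadditivity $h(z+w)\le h(z)+h(w)$, combined with your Step 1 bound, gives the Lipschitz estimate $H'\le 2\pi\Pe(E)$. This yields $\frac{\dd}{\dd\lambda}\big[\lambda^{-3}\eng(\lambda E)\big]\le-\theta_\alpha\lambda^{-2}\Pe(E)$, which is precisely the paper's inequality.

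Two minor points:
\begin{enumerate}
\item Bounded perimeter of minimizing sequences does not need $\alpha>(2\pi)^{1/3}$, since $\enf\ge\Pe$.
\item Vanishing is excluded by bounded perimeter and fixed volume, via the relative isoperimetric inequality on unit cubes, not by the comparison you describe.
\end{enumerate}
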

	
Our second theorem shows the existence of minimizers for any $\alpha$ provided $m$ is sufficiently small. Using the arguments of Frank and Nam \cite{FN} for the liquid drop model ($\alpha=0$ case), we prove the following.
	\begin{theorem}\label{thm:exist_every_alpha}
		Let
			\[
				\bar{m} \defeq \frac{5(2^{1/3}-1)}{2(1-2^{-2/3})}.
			\] 
		Then for any $\alpha\geq 0$ and $m\in(0,\bar{m})$, the problem \eqref{eqn:energy_alpha_min} admits a solution.
	\end{theorem}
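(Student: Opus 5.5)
The plan follows the Frank--Nam strategy for the liquid drop model, with two extra checks: the Yukawa kernel satisfies $0<Y_\alpha\leq Y_0$, and it is still positive definite and radially decreasing. I expect none of the inequalities below to need homogeneity in a way that fails for $\alpha>0$, but the pieces listed at the end have not been checked and could change the constant.

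\textbf{Step 1: generalized minimizers.} Since $Y_\alpha\in L^1_{\loc}(\R^3)$ and $Y_\alpha(z)\to0$ as $|z|\to\infty$, the concentration-compactness argument of Frank--Lieb/Frank--Nam should go through unchanged. It gives a \emph{generalized minimizer}: finitely many sets $E_1,\dots,E_N$ with $\sum|E_i|=m$ and
\[
e_\alpha(m)=\sum_{i=1}^N\enf(E_i).
\]
Each $E_i$ minimizes $\enf$ at its own mass $m_i=|E_i|$. Finiteness of $N$ and boundedness of each $E_i$ should come from the usual density estimates for quasi-minimizers of the perimeter, since $\nl$ is Lipschitz with respect to $|E\triangle F|$. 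If $N=1$, then $E_1$ solves \eqref{eqn:energy_alpha_min}. So it suffices to exclude $N\geq2$ when $m<\bar m$.

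\textbf{Step 2: pairwise binding.} Given two pieces $E_i,E_j$ with $i\neq j$, place translates of them far apart and bring them together in a half-space arrangement. More precisely, I would average over the relative position so that the cross-term $2\int_{E_i}\int_{E_j}Y_\alpha$ is controlled, using $Y_\alpha\leq Y_0$, by the Coulomb cross-term. The aim is an upper bound on $e_\alpha(m_i+m_j)$ that, when $m_i+m_j$ is small, contradicts
\[
e_\alpha(m_i+m_j)\ \geq\ \enf(E_i)+\enf(E_j),
\]
which holds because the $E_k$ form a generalized minimizer.

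\textbf{Step 3: cutting a piece by planes.} Following Frank--Nam, take a single piece $E=E_i$ and cut it by a plane $H$. Minimality of the generalized configuration shows that separating $E\cap H^+$ from $E\cap H^-$ to infinity cannot lower the energy, so
\[
2\,\mathcal H^2(E\cap\partial H)\ \geq\ 2\int_{E\cap H^+}\int_{E\cap H^-}Y_\alpha(x-y)\dd x\dd y .
\]
Averaging over all planes turns the left side into $|E|$ (up to a constant) and the right side into an averaged distance interaction.

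\textbf{Step 4: combining the bounds.} The bounds from Steps 2 and 3 are then combined with two comparisons. The lower bound is $\enf(E_k)\geq\Pe(E_k)\geq\Pe(B_{m_k})$ by the isoperimetric inequality. The upper bound is $e_\alpha(\mu)\leq\enf(B_\mu)\leq\Pe(B_\mu)+\calN_0(B_\mu)$, where $\calN_0(B_\mu)=c\,\mu^{5/3}$ is the unscreened interaction of the ball. Using the concavity of $t\mapsto t^{2/3}$, the worst case should be two equal masses. This is the source of the factors $2^{1/3}-1$ and $1-2^{-2/3}$, and the constant $5/2$ should come from the explicit self-energy of the ball. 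The resulting conclusion is that $N\geq 2$ is impossible when $m<\bar m$.

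\textbf{Main obstacle.} I expect the hardest part to be making the $\alpha$-dependence disappear, so that the threshold $\bar m$ is uniform in $\alpha\geq0$. The upper bounds should use $Y_\alpha\leq Y_0$. Wherever a lower bound on the interaction is needed, I would instead drop the nonlocal term, which is nonnegative. The delicate points are the plane-averaging identity in Step 3, which for $\alpha=0$ uses the explicit Coulomb structure, and checking that only the direction $Y_\alpha\leq Y_0$ is ever used there. If that fails, I would fall back on the representation $Y_\alpha(z)=\int_0^\infty e^{-\alpha t}\,\mathbf 1_{\{|z|<t\}}\,|z|^{-1}\dd t$-type layer-cake decompositions, so that the Coulomb identity can be applied slice by slice.
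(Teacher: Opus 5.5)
There is a genuine gap. Your Step 1 is fine: reducing to excluding a generalized minimizer with $N\ge 2$ pieces is equivalent to proving the strict binding inequality $e_\alpha(m)<e_\alpha(\xi)+e_\alpha(m-\xi)$. That plays the role of Proposition~\ref{prop:compactness} in the paper. The problem is that Steps 2--4 never produce strict binding.

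\begin{itemize}
\item \emph{Step 3 points the wrong way.} It is the Lieb/Frank--Nam plane-cutting argument, which shows that \emph{large} pieces prefer to split. That is a nonexistence tool. For $\alpha>0$ it does not even bound the mass, since averaging over planes produces $\int_E\int_E e^{-\alpha|x-y|}$ rather than $|E|^2$.
\item \emph{Step 2 has no mechanism for lowering the energy.} Bringing two disjoint pieces together only adds the positive cross-term. Bounding that term above by the Coulomb one cannot create a decrease unless the perimeter drops, and you give no way to make it drop.
\item \emph{Step 4 gives no positive threshold.} You compare the isoperimetric lower bound for the pieces (dropping $\nl$) with the ball upper bound $C_1m^{2/3}+C_2m^{5/3}$ for the union. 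This requires $C_1\bigl(t^{2/3}+(1-t)^{2/3}-1\bigr)>C_2m$ for all $t\in(0,1)$. The left side is concave in $t$ and vanishes at $t=0,1$, so its infimum is $0$. The worst case is a vanishing piece, not equal masses, so your concavity claim is backwards.
\end{itemize}

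The missing idea is the dilation lemma \eqref{eqn:min_at_m'}.
\begin{itemize}
\item Take $E$ with $|E|=tm$ and rescale it to $t^{-1/3}E$, which has volume $m$.
\item Use $\nl(t^{-1/3}E)=t^{-5/3}\calN_{t^{-1/3}\alpha}(E)\le t^{-5/3}\nl(E)$. Dilation increases the screening rate, so the loss of homogeneity goes in the favorable direction. This is the only place $\alpha>0$ needs care; your layer-cake fallback is unnecessary.
\item With the isoperimetric inequality this gives $e_\alpha(tm)\ge t^{5/3}e_\alpha(m)+(1-t)t^{2/3}C_1m^{2/3}$. This lower-bounds the pieces in terms of $e_\alpha(m)$ itself, so no lower bound on $\nl$ is ever needed.
\item Summing the bounds for $t$ and $1-t$, the coefficient $t^{5/3}+(1-t)^{5/3}-1$ of $e_\alpha(m)$ is negative.
\item Only then insert the ball competitor, using $\nl(B[m])\le C_2m^{5/3}$.
\end{itemize}
The resulting condition involves the ratio $\bigl(t^{2/3}+(1-t)^{2/3}-1\bigr)/\bigl(1-t^{5/3}-(1-t)^{5/3}\bigr)$. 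It blows up like $\tfrac35 t^{-1/3}$ at the endpoints and is minimized at $t=1/2$. That is where $2^{1/3}-1$ and $1-2^{-2/3}$ come from, and $C_1/C_2=5/2$ then gives $\bar m$.
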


Next we turn to characterizing minimizers. Arguing as in Carazzato, Fusco, and Pratelli \cite{CFP}, we prove that for sufficiently small volume, the global minimizer is the unit ball.
	\begin{theorem}\label{thm:global_min}
		There exists $m_0>0$ (independent of $\alpha$) such that for any $\alpha\geq 0$ and $m<m_0$, the unique minimizer, up to translations, of $\enf$ over sets $|E|=m$ is the ball $B[m]$ of volume $m$. 
	\end{theorem}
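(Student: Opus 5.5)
Following the Carazzato--Fusco--Pratelli strategy, I rescale to unit volume and reduce to a quantitative-isoperimetric comparison, with all constants controlled by the Newtonian case $\alpha=0$. For $|E|=m$ write $E=\lambda F$ with $\lambda=(m/|B_1|)^{1/3}$ and $|F|=|B_1|$. Since $Y_\alpha(\lambda z)=\lambda^{-1}Y_{\lambda\alpha}(z)$, we get
\[
\enf(E)=\lambda^2\Big(\Pe(F)+\lambda\,\mathcal N_{\lambda\alpha}(F)\Big).
\]
Thus minimizing $\enf$ at volume $m$ is the same as minimizing $\Pe(F)+\e\,\mathcal N_{\beta}(F)$ with $\e=\lambda$, $\beta=\lambda\alpha\in[0,\infty)$, and $|F|=|B_1|$. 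I will prove that there is $\e_0>0$ such that for all $\e<\e_0$ and \emph{all} $\beta\geq0$ the ball $B_1$ is the unique minimizer up to translations. Existence of a minimizer for small $m$ is supplied by Theorem~\ref{thm:exist_every_alpha}, so only the identification of the minimizer is needed. In any case the comparison inequality below works for arbitrary competitors, so it shows the ball is a minimizer without any existence theory.

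The key step is a uniform Riesz-type estimate: for $|F|=|B_1|$ and every $\beta\geq0$,
\[
\mathcal N_\beta(B_1)-\mathcal N_\beta(F)\leq C\,|\asymm{F}{B_1}|
\]
with $C$ independent of $\beta$, after translating $F$ to optimize the Fraenkel asymmetry. To prove it, set $v_F=Y_\beta*\chi_F$ and write
\[
\mathcal N_\beta(B_1)-\mathcal N_\beta(F)=\int (\chi_{B_1}-\chi_F)(v_{B_1}+v_F).
\]
The pointwise bound $0\leq v_F\leq Y_0*\chi_{B_1}\leq C$ holds uniformly in $\beta$, because $Y_\beta\leq Y_0$ and Riesz rearrangement puts the worst-case potential at the ball. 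This bounds the difference by $2C|\asymm{F}{B_1}|$.

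Combining this with the sharp quantitative isoperimetric inequality $\Pe(F)-\Pe(B_1)\geq c\,|\asymm{F}{B_1}|^2$ only gives
\[
\Pe(F)+\e\mathcal N_\beta(F)-\Pe(B_1)-\e\mathcal N_\beta(B_1)\geq c\,a^2-C\e\, a,
\]
where $a=|\asymm{F}{B_1}|$, which is inconclusive when $a\lesssim\e$. This is the main obstacle. To handle it I would follow the CFP reduction. First, using almost-minimality, any minimizer $F_\e$ is a $(\Lambda,r_0)$-quasi-minimizer of perimeter with $\Lambda$ uniform in $\beta$, since the nonlocal term is $C$-Lipschitz in volume by the $L^\infty$ bound above. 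Next, $|\asymm{F_\e}{B_1}|\to0$ as $\e\to0$ by the quantitative inequality. Regularity of quasi-minimizers then gives that $\partial F_\e$ is a $C^{1,\gamma}$ normal graph $u$ over $\partial B_1$ with $\|u\|_{C^{1,\gamma}}\to0$ uniformly in $\beta$.

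For such nearly spherical sets I would use Fuglede's estimate $\Pe(F)-\Pe(B_1)\geq c\|u\|_{H^1(\partial B_1)}^2$, valid after fixing the barycenter and the volume. Against it, I need the nonlocal deficit bounded quadratically, $\mathcal N_\beta(B_1)-\mathcal N_\beta(F)\leq C\|u\|_{L^2}^2$ with $C$ uniform in $\beta$. This should follow from the first-variation structure: $v_{B_1}$ is radial, so on $\partial B_1$ it is constant, and the linear term vanishes under the volume constraint. The remainder is controlled by $\|\nabla v\|_\infty\lesssim1$ uniformly (again from $Y_\beta\leq Y_0$, $|\nabla Y_\beta|\leq |\nabla Y_0|$) together with a bilinear term $\int\!\int Y_\beta\, u\,u\leq\int\!\int Y_0|u||u|\leq C\|u\|_{L^2}^2$. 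Hence for $\e<c/C$ the energy difference is at least $(c-C\e)\|u\|_{H^1}^2\geq0$, with equality only when $u=0$. So $F_\e=B_1$ up to translation, and $m_0=|B_1|\e_0^3$ is independent of $\alpha$.
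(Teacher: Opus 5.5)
Your proposal is correct and follows essentially the same route as the paper: rescale to volume $|B_1|$, use regularity that is uniform in $\alpha$ to reduce to nearly spherical minimizers, then play Fuglede's inequality against a quadratic bound on $\nl(B_1)-\nl(F)$ that comes from the radial, uniformly Lipschitz ball potential being constant on $\partial B_1$; the only variation is that the paper discards the interaction term $-\iint Y_\alpha(x-y)\phi(x)\phi(y)\dd x\dd y$, $\phi=\chi_F-\chi_{B_1}$, by positive definiteness (Lemma~\ref{lem:kernel_prop}(iii)), whereas you bound it through $Y_\alpha\le Y_0$ and a Schur-type estimate, which also works. Two small corrections: since $\nl(\lambda F)=\lambda^5\calN_{\lambda\alpha}(F)$, the rescaled coefficient is $\lambda^3$, not $\lambda$, so $m_0=|B_1|\e_0$; and your aside that the comparison yields minimality ``without any existence theory'' is false, because Fuglede's estimate only applies to the nearly spherical sets obtained from regularity of an actual minimizer, so your appeal to Theorem~\ref{thm:exist_every_alpha} (taking $m_0\le\bar m$) is genuinely needed.
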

	
Finally, we derive a critical stability threshold for balls of volume $m$. 
	\begin{theorem}\label{thm: Stable Critical Threshold} For any $\alpha\geq 0$ there exists a computable $m_*(\alpha)>0$ such that if $m>0$ the ball $B[m]$ of volume $m$ is a volume-constrained stable critical point of $\enf$ if and only if $m \leq m_*(\alpha)$. In particular, when $\alpha \geq (2\pi)^{1/3}$ we have $m_*(\alpha)=+\infty$, so that $B[m]$ is always stable; when $\alpha <(2\pi)^{1/3}$ we have $m_*(\alpha)\geq 5$.
	\end{theorem}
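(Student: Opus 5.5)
We compute the second variation of $\enf$ at the ball $B_R$ with $|B_R|=m$ and diagonalize it in spherical harmonics, as in the unscreened case. For a normal perturbation $\varphi\in H^1(\partial B_R)$ with $\int_{\partial B_R}\varphi=0$ (volume constraint), we use the standard formula for perimeter plus a radial interaction kernel $K$:
\[
\partial^2\enf(B_R)[\varphi]=\int_{\partial B_R}\big(|\nabla_\tau\varphi|^2-\tfrac{2}{R^2}\varphi^2\big)
+2\int_{\partial B_R}\!\int_{\partial B_R}Y_\alpha(x-y)\varphi(x)\varphi(y)
+2\int_{\partial B_R}\partial_\nu v_{B_R}\,\varphi^2 .
\]
Here $v_{B_R}=Y_\alpha*\chi_{B_R}$ is the Yukawa potential of the ball. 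This formula only uses that $Y_\alpha$ is radial and locally integrable, so its derivation does not need homogeneity. Stability means $\partial^2\enf(B_R)[\varphi]\geq0$ for all such $\varphi$. We also exclude the degree-one modes, which are translations and give zero.

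Next we expand $\varphi=\sum_{k\geq2}\sum_j a_{kj}Y_k^j$. The tangential gradient term gives $k(k+1)-2$ over $R^2$. By the Funk–Hecke formula, the kernel term gives eigenvalues
\[
\mu_k(\alpha,R)=2\pi R^2\int_{-1}^1 \frac{e^{-\alpha R\sqrt{2-2t}}}{R\sqrt{2-2t}}P_k(t)\dd t .
\]
Equivalently, $\mu_k=4\pi\alpha R^2\, i_k(\alpha R)k_k(\alpha R)$ in terms of modified spherical Bessel functions, using the Yukawa Green function expansion. The boundary term is constant: $\partial_\nu v_{B_R}(R)=-c(\alpha,R)$, which is explicit via the closed-form Yukawa potential of a ball, involving $\alpha R\cosh(\alpha R)-\sinh(\alpha R)$ and $e^{-\alpha R}$. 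The ball is therefore stable if and only if
\[
\lambda_k(\alpha,R)\defeq \frac{(k-1)(k+2)}{R^2}+2\mu_k(\alpha,R)-2c(\alpha,R)\geq 0\qquad\text{for all }k\geq2 .
\]

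The main obstacle is proving that $k=2$ is the worst mode for every $\alpha$, and then analyzing the single function $\lambda_2$ in $R$. To show $\lambda_k\geq\lambda_2$, we want the gap $\mu_2-\mu_k$ to be dominated by $[(k-1)(k+2)-4]/(2R^2)$. We will try the following. Write $Y_\alpha(r)=\int_0^\infty r^{-1}$-type superpositions, or use $\mu_k\geq0$ together with monotonicity of $\mu_k$ in $k$ (the positive, decreasing Legendre coefficients of a completely monotone function of $2-2t$). Then bound $\mu_2\leq\mu_2^{\alpha=0}=\frac{4\pi R}{5}$. This yields
\[
\lambda_k-\lambda_2\geq\frac{(k-1)(k+2)-4}{R^2}-\frac{8\pi R}{5}.
\]
This works only for small $R$, so for large $R$ we need sharper decay estimates of $\mu_2$ (of order $1/(\alpha^2R)$). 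We therefore expect a case analysis in $\alpha R$.

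Once the problem is reduced to $\lambda_2(\alpha,R)\geq0$, we substitute $s=\alpha R$ and study the explicit expression. Set $m_*(\alpha)=\sup\{m:\lambda_2\geq0\}$ and show that $\{R:\lambda_2\geq0\}$ is an interval $(0,R_*]$, through sign/monotonicity of $R^3\lambda_2$. For large $R$, $\mu_2=O(1/(\alpha^2 R))$ and $c\to 2\pi/\alpha^2$, so $R^2\lambda_2\to 4-4\pi R^2/\alpha^2\cdot(\ldots)$. A more careful asymptotic, with the $1/R^2$ corrections from the curvature expansion of $c-\mu_2$, should give $\lambda_2\geq0$ for all $R$ exactly when $\alpha^3\geq2\pi$, hence $m_*=\infty$. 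Finally, $m_*(\alpha)\geq5$ follows by comparison with $\alpha=0$. There we have $\lambda_2^{(0)}\geq0\iff m\leq5$, and we would show $\lambda_2$ is nondecreasing in $\alpha$ at fixed $R$, i.e. $\partial_\alpha(\mu_2-c)\geq0$, by differentiating the explicit Bessel expressions.
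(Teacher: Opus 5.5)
\textbf{There is a genuine gap, at exactly the step you flag as the main obstacle.} Nothing in the proposal shows that the mode $k=2$ decides stability outside a small range of volumes.

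\emph{What your crude bound does give.} For $k=3$ it needs $6/R^2\ge 8\pi R/5$, i.e.\ $m\le 5$, and larger $k$ are easier. Combined with the (true) monotonicity of $\lambda_2$ in $\alpha$ at fixed $R$, this shows that $B[m]$ is stable for every $\alpha$ when $m\le5$. So your route to $m_*(\alpha)\ge5$ is fine once $m_*$ is known to be a threshold.

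\emph{What it does not give.} The ``if'' direction for $5<m\le m_*(\alpha)$ is left to an unspecified case analysis. So is $m_*(\alpha)=+\infty$ for $\alpha^3\ge2\pi$, which needs the reduction for every $R$. That analysis cannot work as set up, because the target inequality $\lambda_k\ge\lambda_2$ is false in general. At $\alpha=0$ one has $\lambda_3-\lambda_2=6/R^2-16\pi R/35<0$ once $m>35/2$. For $\alpha>0$, the expansion $I_\nu(s)K_\nu(s)\sim\frac{1}{2s}(1-\frac{4\nu^2-1}{8s^2})$ gives $\mu_k\approx\frac{2\pi}{\alpha}(1-\frac{k(k+1)}{2\alpha^2R^2})$. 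Hence $\lambda_k-\lambda_2\approx\frac{k(k+1)-6}{R^2}(1-\frac{2\pi}{\alpha^3})$, which is negative for $\alpha^3<2\pi$. Since this expansion is not uniform in $k$, it does not settle $\alpha^3>2\pi$ either.

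Your stated large-$R$ sizes are also off. Both $\mu_2$ and $c$ tend to $2\pi/\alpha$; here $c=\mu_1$ because translations have zero second variation. Only the difference $\mu_1-\mu_2\approx 4\pi/(\alpha^3R^2)$ is small.

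\textbf{The missing idea: monotonicity of ratios, not of differences.} At fixed $s=\alpha R$, the sequence $k\mapsto (k-1)(k+2)/(\mu_1-\mu_k)$, $k\ge2$, is strictly increasing. This gives $\lambda_2\ge0\Rightarrow\lambda_k\ge0$ for all $k\ge2$ without ever comparing $\lambda_k$ with $\lambda_2$. The paper proves this (its appendix proposition on $\beta_*^{(l)}$) through a variational characterization: $1/(s\,i_l(s)k_l(s))=G(l(l+1))$. Here $G(\tau)$ is the infimum of $\int_0^\infty(\varphi'^2+s^2\varphi^2)r^2\,dr+\tau\int_0^\infty\varphi^2\,dr$ over $\varphi(1)=1$. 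It is attained by $i_l(sr)/i_l(s)$ on $(0,1)$ and $k_l(sr)/k_l(s)$ on $(1,\infty)$. As an infimum of increasing affine functions of $\tau$, $G$ is increasing and concave. Concavity plus $x+x^{-1}>2$ then yields the strict ratio monotonicity.

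\textbf{The one-variable step also needs a global fact, not asymptotics.} After the reduction, $\lambda_2\ge0$ holds iff $s^3(\mu_1^*(s)-\mu_2^*(s))\le 2\alpha^3$, where $\mu_l^*(s)=4\pi I_{l+1/2}(s)K_{l+1/2}(s)$. So you need $s\mapsto s^3(\mu_1^*-\mu_2^*)(s)$ to be strictly increasing with supremum $4\pi$, not attained. Equivalently, the paper's $\beta_*(s)/s^3$ decreases to $1/(2\pi)$. This single fact gives both the interval structure of the stable set and the dichotomy at $\alpha^3=2\pi$. The leading-order term $\frac{4}{R^2}(1-\frac{2\pi}{\alpha^3})$ identifies the threshold but vanishes exactly at the borderline. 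Your ``more careful asymptotic'' therefore cannot decide the case $\alpha^3=2\pi$ or rule out a non-interval stability set.
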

As a consequence of this stability result, using the well-established scheme in the literature (see for instance \cite{AFM,BC,FFMMM}), we obtain the $L^1$-local minimality of $B[m]$. We note that our proof does not require any homogeneity, in contrast to \cite{BC,FFMMM}.
	\begin{theorem}\label{thm: L1 Local Minimality} For every $\alpha \geq 0$ and $m < m_*(\alpha)$ there exists $\epsilon_*(\alpha,m)>0$ such that if $|E|=m$ and $|E\Delta B[m]| \leq \epsilon_*m$ then
\[\enf(B[m])\leq \enf(E)\]
with equality if and only if $|E\Delta B[m]|=0$ up to translation. If instead $m>m_*(\alpha)$ then there exists a sequence of sets of finite perimeter $\{E_i\}_{i=1}^{\infty}$ with $|E_i|=m$ and $|E_i\Delta B[m]|\to 0$ but $\enf(B[m])>\enf(E_i)$ for every $i\in \mathbb{N}$.
	\end{theorem}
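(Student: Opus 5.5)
The plan follows the scheme of \cite{AFM,BC,FFMMM}: strict stability gives $W^{2,p}$/$C^{1,\gamma}$-local minimality, which an $\Lambda$-minimality regularity argument upgrades to $L^1$-local minimality. The instability half comes directly from the second variation.

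\emph{Instability for $m>m_*(\alpha)$.} By Theorem~\ref{thm: Stable Critical Threshold} there is a mode $\varphi$ on $\partial B[m]$, a spherical harmonic of some degree $k\geq 2$, with $\int_{\partial B}\varphi=0$ and $\partial^2\enf(B[m])[\varphi]<0$. I take the normal graph $E_t=\{x+t\varphi(x)\nu(x)\}$ and correct its volume by a dilation so that $|E_t|=m$. A second-order Taylor expansion, using that $B[m]$ is critical with a Lagrange multiplier, gives $\enf(E_t)=\enf(B[m])+\tfrac{t^2}{2}\partial^2\enf(B[m])[\varphi]+o(t^2)$. Taking $t_i\to0$ then yields sets $E_i$ with $|E_i\Delta B[m]|\to0$ and $\enf(E_i)<\enf(B[m])$.

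\emph{Local minimality for $m<m_*(\alpha)$.} This proceeds in three steps.

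\begin{enumerate}
\item \textbf{Coercivity.} Strict stability, together with the explicit eigenvalues of the quadratic form on spherical harmonics, gives $\partial^2\enf(B[m])[\varphi]\geq c_0\|\varphi\|_{H^1(\partial B)}^2$ for all $\varphi$ with zero mean that are $L^2$-orthogonal to the translation modes (the degree-one harmonics). The eigenvalue of degree $k$ is $\lambda_k(\lambda_k-2)/r^3$ minus the Yukawa multiplier. I expect to show that this multiplier is bounded in $k$ and tends to $0$, so positivity for every $k\geq2$ upgrades to a uniform $H^1$ bound.

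\item \textbf{$W^{2,p}$-local minimality.} Following \cite{AFM}, for nearby normal graphs $E_\psi$ with $|E_\psi|=m$ and barycenter $0$, I write $f(t)=\enf(E_{t\psi})$ and estimate $f''(t)-f''(0)$. This uses the second-variation formula for the perimeter plus the nonlocal part. The nonlocal potential $v_E(x)=\int_E Y_\alpha(x-y)\,dy$ satisfies $(-\Delta+\alpha^2)v_E=4\pi\chi_E$. Hence $v_E\in C^{1,\gamma}$ with bounds uniform in $E$, and its dependence on $E$ is Lipschitz in $L^1$. I would derive these estimates directly from the equation and the kernel bounds $Y_\alpha\leq Y_0$ and $|\nabla Y_\alpha|\leq C|z|^{-2}$. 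This is the point where homogeneity is avoided: no scaling of the kernel is needed, only pointwise bounds on it.

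\item \textbf{$L^1$ to $W^{2,p}$, which I expect to be the main obstacle.} I argue by contradiction, taking sets $E_h$ with $|E_h\Delta B|\to0$ and $\enf(E_h)<\enf(B)$. Each $E_h$ is replaced by a minimizer $F_h$ of the penalized problem $\enf(F)+\Lambda\bigl||F|-m\bigr|+\sqrt{(|F\Delta B|-\epsilon_h)^2+\epsilon_h}$. Since $\nl$ is Lipschitz in $L^1$ with a constant depending only on $Y_\alpha\in L^1_{\loc}$ (bounded by the $\alpha=0$ constant), the sets $F_h$ are uniform $(\Lambda,r_0)$-perimeter minimizers. Uniform $\Lambda$-minimality and the regularity theory in \cite{Mag} then give $C^{1,\gamma}$ convergence $\partial F_h\to\partial B$. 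The Euler--Lagrange equation $H_{F_h}+2v_{F_h}=\text{const}$, combined with Schauder estimates using the uniform $C^{1,\gamma}$ bound on $v$, bootstraps this to $W^{2,p}$ and eventually $C^{2,\gamma}$ convergence. After translating and rescaling the volume, Step~2 gives $\enf(F_h)\geq\enf(B)$, which contradicts the construction. Equality forces $F_h$ to be a translate of $B$. Finally, the volume penalty has to be shown exact for large $\Lambda$; I would do this by comparison with dilations, using the smooth dependence of $\nl(tE)$ on $t$ instead of the scaling law available at $\alpha=0$.

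\end{enumerate}
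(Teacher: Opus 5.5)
Your instability argument and Steps 1--2 are a legitimate alternative to the paper's route: an Acerbi--Fusco--Morini second-variation/$W^{2,p}$ argument, where the paper instead uses a Fuglede-type bound, Theorem~\ref{thm: Quantitative Deficit}, that needs only $C^1$-smallness. Step 1 has two harmless slips. The degree-$k$ perimeter eigenvalue on $\partial B_1$ is $\lambda_k-\lambda_1$. The Yukawa contribution $\mu_k^\alpha-\mu_1^\alpha$ increases to $2\mu_1^*(\alpha)>0$ rather than tending to $0$; boundedness is all you use.

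The genuine gap is in Step 3, exactly where you must produce $W^{2,p}$-small graphs to feed into Step 2. Minimizers $F_h$ of your penalized functional do not satisfy $H_{F_h}+2v_{F_h}=\mathrm{const}$. Writing $\partial F_h=\{(1+u_h)x\}$, the first variation of $\sqrt{(|F\Delta B|-\epsilon_h)^2+\epsilon_h}$ contributes a term $\sigma_h\operatorname{sign}(u_h)$, where $\sigma_h=(|F_h\Delta B|-\epsilon_h)/\sqrt{(|F_h\Delta B|-\epsilon_h)^2+\epsilon_h}\in(-1,1)$. The volume penalty adds a multiplier in $[-\Lambda,\Lambda]$. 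This right-hand side jumps across $\{u_h=0\}$, so no Schauder bootstrap to $C^{2,\gamma}$ is available, and $L^p$ theory gives only bounded $W^{2,p}$ norms. The AFM estimate of $f''(t)-f''(0)$, however, needs the curvatures of the intermediate sets to converge, for instance in the second-fundamental-form term. You would therefore have to prove $\sigma_h\to0$, and the proposal does not address this.

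Second, in $\mathbb{R}^3$ (AFM work on a torus), your functional with unit coefficient on the $L^1$ term neither guarantees minimizers nor localizes them near $B$. Take $\alpha=0$, or small $\alpha>0$ by continuity, and $m=4.9<5\leq m_*(\alpha)$. Let $F$ be a concentric ball of volume $m/2$ together with a ball of volume $m/2$ sent to infinity. Then $\enf(F)+|F\Delta B[m]|\approx 34.8+4.9<41.3\approx\enf(B[m])$. So the penalized infimum is not attained near $B$, and mass escapes to infinity. Repairing this needs three things: a truncation to a bounded region, a large coefficient on the $L^1$ penalty, and a lower bound $\enf(F)\geq\enf(B)-C|F\Delta B|$ near $B$. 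That lower bound is also what yields $\sigma_h\to0$.

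The paper sidesteps all of this:
\begin{enumerate}
\item Step one truncates, via Almgren's lemma plus a dilation and using $\mathcal{N}_{\lambda\alpha}\leq\nl$ for $\lambda\geq1$, so that $E_i\subset B_R$.
\item It penalizes by $\Lambda|E\Delta E_i|$ with no volume constraint, so minimizers exist inside $B_R$ and are $\Lambda'$-minimizers of perimeter.
\item $C^{1,\gamma}$ closeness then suffices, because Theorem~\ref{thm: Quantitative Deficit} needs only $C^1$-smallness.
\item Volume is restored by $G_i=x_i+t_iF_i$. Choosing $\Lambda>c\beta$ forces $t_i\geq1$, after which $\mathcal{N}_{t_i\alpha}(F_i)\leq\nl(F_i)$ closes the contradiction.
\end{enumerate}
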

	
\begin{remark}[Relationship between variants of $\enf$]\label{rem:master}
One could consider the a priori more general three-parameter family of functionals
	\[
  		\ene(E)\defeq \Pe(E)+\beta\,\nl(E)\qquad\text{over }|E|=m, \qquad (\alpha,\beta,m)\in[0,\infty)\times[0,\infty)\times(0,\infty),
	\]
where $\nl(E)$ is defined as in \eqref{eqn: Nonlocal Definition}. Here we are studying $\enf=\calE_{\alpha,1}$ over volume $m$, whereas in the literature authors sometimes consider $\ene$ over volume $|B_1|$ instead. Under a volume scaling, studying volume-constrained minimizers of any of these two functionals is equivalent to just studying $\enf$ for a particular choice of $\alpha$ and $m$. Indeed, by a change of variables 
	\[
		\nl(tE) = \int_{tE}\int_{tE} Y_{\alpha}(x-y) \dd x \dd y = t^6 \int_E\int_E Y_{\alpha}(t(x-y)) \dd x\dd y = t^5\calN_{t\alpha}(E)
	\]
and combined with the usual scaling of the perimeter yields
	\begin{equation}\label{eqn:dilation}
  		\ene(tE)=t^2\bigl[\Pe(E)+\beta t^3\calN_{t\alpha}(E)\bigr];
	\end{equation}
hence, $(\alpha,\beta,m)\mapsto(t\alpha,\,\beta t^3,\,m/t^3)$. For example, minimizers of $\ene$ with volume $m$, provided they exist, are (up to scaling) minimizers of $\calF_{\alpha\beta^{-1/3}}$ with volume $m\beta$. At times we may switch to studying $\ene$ for the convenience of presentation. Indeed, Theorem \ref{thm: Stable Critical Threshold} and Theorem \ref{thm: L1 Local Minimality} are more easily stated and proven in terms of $\ene$; see Theorem \ref{thm: Stable Critical Threshold Beta} and Theorem \ref{thm: L1 Local Minimality Beta} for the analogous statements, including an explicit stability threshold in terms of $\beta$, see \eqref{eqn:beta_star}. We state all the results in the introduction in terms of $\enf$ to not burden the reader and to provide a physically relevant functional. 
\end{remark}	

\begin{remark}[Comparison with previous literature]\label{rem:comparison}
At $\alpha=0$ the value of $m_*(0)$ is not new. Bonacini and Cristoferi \cite{BC} and later Figalli, Fusco, Maggi, Millot, and Morini \cite{FFMMM} obtain the unscreened counterpart of Theorem~\ref{thm: L1 Local Minimality}, i.e. the sharp threshold for local minimality of balls for the perimeter plus a Riesz interaction. In $\R^3$ our $m_*(\alpha)$ agrees with it as $\alpha\to0$.

In \cite{Peg}, Pegon considers the functionals
	\[
		\mathcal{F}_{\gamma, G_{\lambda}}(E) \defeq  \Pe(E)-\gamma\lambda^4 \int_E\int_{E^c} G\big(\lambda (x-y)\big) \dd x \dd y
	\]
for some kernel $G$. In particular, if $G(x) = e^{-|x|}/(2\pi |x|)$ then Pegon proves in \cite[Theorem C]{Peg} that
	\begin{itemize}
		\item if $\gamma<1$ there exists $\lambda_s(\gamma)>0$ such that if $\lambda>\lambda_s$, then $B_1$ is a volume-constrained stable critical point of $\mathcal{F}_{\gamma,G_{\lambda}}$;
		\item if $\gamma >1$  there exists $\lambda_u(\gamma)>0$ such that if $\lambda>\lambda_u$, then $B_1$ is a volume-constrained unstable critical point of $\mathcal{F}_{\gamma,G_{\lambda}}$.
	\end{itemize}	
Using that $\|Y_{\lambda}\|_{L^1(\mathbb{R}^3)} = 4\pi/\lambda^2$, we have
	\[
		\mathcal{F}_{\gamma, G_{\lambda}}(E) = \Pe(E) + \gamma\lambda^3 \int_E\int_E \frac{e^{-\lambda|x-y|}}{2\pi|x-y|}  \dd x \dd y - 2\gamma \lambda |E| = \calE_{\lambda, \gamma \lambda^3/2\pi}(E)-2\gamma\lambda|E|.
		\]
Hence, analyzing volume-constrained stable critical points $E$ with $|E|=|B_1|$ of $\mathcal{F}_{\gamma, G_{\lambda}}$ is equivalent to that of $\calF_{\alpha}$ over $E$ with $|E|=m$, where $\alpha=(2\pi/\gamma)^{1/3}$ and $m=(\gamma\lambda^3/2\pi)|B_1|$. In particular, when $\gamma>1$ we have $\alpha<(2\pi)^{1/3}$, and so by Theorem \ref{thm: Stable Critical Threshold} we can recover Pegon's result for some explicitly computable $\lambda_u$, and moreover show that if $\lambda \leq \lambda_u$ then $B_1$ is a volume-constrained stable critical point of $\calF_{\gamma, G_{\lambda}}$. On the other hand if $\gamma \leq 1$ then $\alpha \geq (2\pi)^{1/3}$, and so $B_1$ is \textit{always} a volume-constrained stable critical point; hence $\lambda_s=0$.
\end{remark}

\begin{remark} One could also study an analogue of $\enf$ in higher dimensions, whose kernel arises as the fundamental solution of $-\Delta+\alpha^2$. For the sake of presentation we elect to focus on dimension $n=3$ as the kernel is well studied in that case, see \cite{Winkelmann}. However, we believe that similar results hold in any dimension.
\end{remark}

 \subsection*{Acknowledgments}
LB is supported by an NSERC discovery grant. KDM was supported by NSF-DMS 2306962. This work was partially completed while KDM was hosted at the Department of Mathematics and Statistics of Virginia Commonwealth University. IT was partially supported by the Simons Foundation MPS-TSM 851065, and by NSF-DMS 2306962 and NSF-DMS 2607148. The authors would also like to acknowledge support from the Fields Institute through their Fields Opportunity for Collaborations US (FOCUS) program.


\section{Notation and Preliminaries}\label{sec:notation_prelim}

As in \cite{Peg}, we will denote by $\calP_\alpha$ the nonlocal functional
	\[
		\calP_\alpha(E) \defeq \int_E \int_{E^c} Y_\alpha(x-y)\dd x \dd y.
	\]
Since $\| Y_\alpha\|_{L^1(\R^3)}=4\pi/\alpha^2$, recalling \eqref{eqn: Nonlocal Definition}, we have $\nl(E)=4\pi|E|/\alpha^2-\calP_\alpha(E)$, so $\enf(E)=\eng(E)+4\pi|E|/\alpha^2$, where
	\[
		\eng(E)\defeq \Pe(E)-\calP_\alpha(E).
	\]
As a consequence,
	\[
		g_\alpha(m) \defeq \inf \Big\{ \eng(E) \colon |E|=m \Big\} = e_\alpha(m) - \frac{4\pi m}{\alpha^2},
	\]
where $e_\alpha(m)$ is defined in \eqref{eqn:energy_alpha_min}.

Throughout the paper we will refer to the following constants related to the surface tension: 
	\[
		\sigma_\alpha\defeq \frac{2\pi}{\alpha^3}, \qquad\qquad \theta_\alpha \defeq 1-\sigma_\alpha,
	\]
as well as the following ones related to the local and nonlocal terms of the energy:
	\begin{equation}\label{eqn:constants}
		C_1 \defeq (36\pi)^{1/3}, \qquad\qquad C_2 \defeq \frac{32\pi^2}{15}\left(\frac{3}{4\pi}\right)^{5/3}, \qquad \text{so that}\quad \bar{m}= \frac{2^{1/3}-1}{1-2^{-2/3}}\frac{C_1}{C_2},
	\end{equation}	
where $\bar{m}$ is defined in the statement of Theorem~\ref{thm:exist_every_alpha}.
Here $C_1$ and $C_2$ are such that $\Pe(E)\geq C_1 |E|^{2/3}$, and $\calN_0(B[m])=C_2m^{5/3}$, where $B[m]$ denotes the ball in $\R^3$ of volume $m$. Also note that $\theta_\alpha>0$ simply means $\alpha>(2\pi)^{1/3}$.

In the following lemma we collect the properties of the Yukawa kernel that we will need in the paper.
	\begin{lemma}[Properties of $Y_\alpha$]\label{lem:kernel_prop}
		For any $\alpha\geq 0$, the function $Y_\alpha$ satisfies the following.
			\begin{enumerate}
            \item $Y_\alpha$ is radial, strictly decreasing in $|z|$, and $0<Y_\alpha \leq Y_0=1/|z|$.
            \item $|\nabla Y_\alpha(z)|=(1+\alpha|z|)e^{-\alpha|z|}/|z|^2\leq 1/|z|^2$.
            \item $Y_\alpha$ is positive definite. Namely $\trns{Y_\alpha}(k)=4\pi/(\alpha^2+|k|^2)>0$, and so
            	\[
            		\iint Y_\alpha(x-y)f(x)f(y)\dd x \dd y \geq 0
            	\]
            for any $f\in L^1(\R^3)\cap L^2(\R^3)$ with compact support.
            \item For any $E\subset \R^3$ with $|E|\leq |B_1|$, and for any $x\in\R^3$, we have
            	\begin{gather*}
            		\int_E Y_\alpha(x-y)\dd y \leq \int_{B_1} \frac{1}{|y|}\dd y = 2\pi,\\
            		\int_E |\nabla Y_\alpha(x-y)|\dd y \leq \int_{B_1} \frac{1}{|y|^2} \dd y = 4\pi.
            	\end{gather*}
            \item For any $E\subset \mathbb{R}^3$ we have $\nl(E) \leq \nl(B[|E|]) \leq \calN_0(B[|E|])$. In particular, $\nl(E) \leq C_2 |E|^{5/3}$ and $\nl(B_1) \leq 32\pi^2/15$. 
        \end{enumerate}
	\end{lemma}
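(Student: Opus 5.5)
We verify the five items in order, since each builds on the previous ones.

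For (1), radiality is immediate. For $\alpha\geq 0$, both $r\mapsto e^{-\alpha r}$ and $r\mapsto 1/r$ are positive and nonincreasing on $(0,\infty)$, and $1/r$ is strictly decreasing, so their product $Y_\alpha$ is strictly decreasing. Since $e^{-\alpha r}\le 1$, we get $0<Y_\alpha\le Y_0$.

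For (2), differentiate the profile $y(r)=e^{-\alpha r}/r$:
\[
y'(r)=-(1+\alpha r)e^{-\alpha r}/r^2 .
\]
The bound then follows from the elementary inequality $(1+t)e^{-t}\le 1$ for $t\ge0$, which holds because the function $t\mapsto(1+t)e^{-t}$ has derivative $-te^{-t}\le 0$ and equals $1$ at $t=0$.

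For (3), compute the Fourier transform of the radial function directly. With the convention $\widehat f(k)=\int f(x)e^{-ik\cdot x}\,dx$, the radial formula gives
\[
\widehat{Y_\alpha}(k)=\frac{4\pi}{|k|}\int_0^\infty e^{-\alpha r}\sin(|k|r)\,dr=\frac{4\pi}{\alpha^2+|k|^2}.
\]
For $\alpha>0$, $Y_\alpha\in L^1$, so this is classical. For $\alpha=0$, the formula holds in the distributional sense, either by letting $\alpha\to0$ or by recalling that $Y_0$ is $4\pi$ times the Green function of $-\Delta$. Then for $f\in L^1\cap L^2$ with compact support, Plancherel and the convolution theorem give
\[
\iint Y_\alpha(x-y)f(x)f(y)\,dx\,dy=(2\pi)^{-3}\int\frac{4\pi\,|\widehat f(k)|^2}{\alpha^2+|k|^2}\,dk\ \ge 0 .
\]
The only care needed is integrability. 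Since $|\widehat f|\le\|f\|_1$ and $|k|^{-2}$ is integrable near $0$ in $\R^3$, while $\widehat f\in L^2$ controls large $|k|$, the right side is finite. Moreover $Y_0*f\in L^2_{\loc}$, so the pairing makes sense. For $\alpha=0$ one can alternatively apply the $\alpha>0$ identity and let $\alpha\downarrow0$ by monotone convergence on both sides.

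For (4), the key tool is the bathtub (rearrangement) principle. If $g$ is a radial, decreasing, nonnegative function, then among sets $E$ of fixed measure, $\int_E g(x-y)\,dy$ is maximized when $E$ is the ball centered at $x$ of the same measure. Indeed, $\int_E g-\int_{B}g=\int_{E\setminus B}g-\int_{B\setminus E}g\le 0$, because $|E\setminus B|=|B\setminus E|$ and $g$ is larger on $B$ than outside it. Apply this with $g=Y_\alpha$ and with $g=|\nabla Y_\alpha|$; the latter is also radial and decreasing, or one can simply use the majorants $1/|z|$ and $1/|z|^2$ from (1)–(2). Enlarging to $B_1$ using $|E|\le|B_1|$ and positivity of the integrand, the estimates become $\int_{B_1}|y|^{-1}\,dy=4\pi\int_0^1 r\,dr=2\pi$ and $\int_{B_1}|y|^{-2}\,dy=4\pi$.

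For (5), the main step is the Riesz rearrangement inequality: for a symmetric decreasing kernel $K$, $\iint \chi_E(x)K(x-y)\chi_E(y)\ge$ fails in general, but $\iint \chi_E K\chi_E\le \iint \chi_{E^*}K\chi_{E^*}$, where $E^*=B[|E|]$ is the centered ball. Applying this with $K=Y_\alpha$, which is symmetric decreasing by (1), gives $\nl(E)\le\nl(B[|E|])$. Then $Y_\alpha\le Y_0$ gives $\nl(B[|E|])\le\calN_0(B[|E|])=C_2|E|^{5/3}$, and setting $|E|=|B_1|=4\pi/3$ gives $C_2(4\pi/3)^{5/3}=32\pi^2/15$.

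The only point needing care is the Fourier/positivity argument at $\alpha=0$, which is handled by the limiting argument described in (3); everything else is elementary.
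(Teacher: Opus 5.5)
Your proof is correct and follows the same route as the paper. Items (i)--(ii) are elementary calculus. Item (iii) is Fourier positivity of $\widehat{Y_\alpha}=4\pi/(\alpha^2+|k|^2)$: you compute it from the radial integral, whereas the paper reads it off from $(-\Delta+\alpha^2)Y_\alpha=4\pi\delta_0$. Items (iv)--(v) use rearrangement together with $Y_\alpha\le Y_0$, with your bathtub argument standing in for Riesz's inequality in (iv). One small slip: in the $\alpha\downarrow 0$ limit in (iii), the left-hand side converges by dominated convergence, not monotone convergence, since $f$ may change sign; dominate by $Y_0(x-y)|f(x)||f(y)|$. This does not affect the conclusion.
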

	
\begin{proof}
The first item follows from the definition of $Y_\alpha$. For the second item we use $(1+t)e^{-t}\leq 1$. The third one follows from the fact that $(-\Delta+\alpha^2)Y_\alpha=4\pi\delta_0$. Items (iv) and (v) use the fact that $Y_\alpha \leq Y_0$ combined with the Riesz rearrangement inequality.
\end{proof}

Next we collect some additional properties about the Yukawa energy of $B_1$.
	\begin{lemma}[Properties of $\nl(B_1)$]\label{lem: Properties of Yukawa B_1} For any $\alpha\geq 0$ the following hold:
		\begin{enumerate}
			\item $\nl(B_1)$ is explicitly given by
			\[
				\nl(B_1) = \frac{8\pi^2e^{-\alpha}}{3\alpha^5}\Big(\left(2\alpha^3-3\alpha^2+3\right) e^{\alpha}-3(\alpha+1)^2e^{-\alpha}\Big).
			\]
			\item Denoting $\nl^{(k)}(B_1)=\dd^k/\dd \alpha^k\, \nl(B_1)$, there exists a $t_0>0$, independent of $\alpha$, such that if $t\in (-t_0,t_0)$ then
			\[
				\nl(B_1)+\alpha\nl'(B_1)\,t+\alpha^3\nl'''(B_1)\,\frac{t^3}{6} \geq 0.
			\]
			\item The following inequalities hold
			\begin{align*}
				&\nl''(B_1)>0, \qquad\qquad 50\nl(B_1)+58\alpha \nl'(B_1)+15\alpha^2\nl''(B_1)+\alpha^3\nl'''(B_1)>0, \\
				&\qquad \quad  60\nl'(B_1)+30\alpha\nl''(B_1)+5\alpha^2\nl'''(B_1) <0, \qquad \qquad \nl'''(B_1)<0.
			\end{align*}
		\end{enumerate}
	\end{lemma}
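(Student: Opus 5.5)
The plan is to reduce everything to one-dimensional integrals, using two tools: the potential of the ball, and the integral representation of the derivatives in $\alpha$.

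\emph{Item (i).} Let $u(x)\defeq\int_{B_1}Y_\alpha(x-y)\dd y$. By Lemma~\ref{lem:kernel_prop}(iii), $(-\Delta+\alpha^2)Y_\alpha=4\pi\delta_0$, so $u$ is the radial solution of $(-\Delta+\alpha^2)u=4\pi\chi_{B_1}$ that is $C^1$ across $\{|x|=1\}$ and decays at infinity. Solving the radial ODE on $\{r<1\}$ and $\{r>1\}$ and matching $u$ and $u'$ at $r=1$ gives
\[
u(r)=\frac{4\pi}{\alpha^2}\Bigl(1-(1+\alpha)e^{-\alpha}\frac{\sinh(\alpha r)}{\alpha r}\Bigr)\qquad\text{for } r<1.
\]
Then
\[
\nl(B_1)=\int_{B_1}u=\frac{16\pi^2}{3\alpha^2}-\frac{16\pi^2(1+\alpha)e^{-\alpha}}{\alpha^3}\int_0^1 r\sinh(\alpha r)\dd r.
\]
The last integral equals $\cosh\alpha/\alpha-\sinh\alpha/\alpha^2$. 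Expanding the hyperbolic functions in exponentials and collecting terms should reproduce the stated closed form; this step is routine algebra.

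\emph{Item (ii).} Differentiate under the integral sign:
\[
\alpha^k\nl^{(k)}(B_1)=\iint_{B_1\times B_1}(-\alpha|z|)^k\,\frac{e^{-\alpha|z|}}{|z|}\dd x\dd y,\qquad z=x-y.
\]
It is convenient to push this forward to the distance variable. Let $\rho(r)$ denote the density of $|x-y|$ for $(x,y)\in B_1\times B_1$; it is the explicit polynomial $\rho(r)=4\pi r^2|B_1|(1-\tfrac34 r+\tfrac1{16}r^3)$ on $[0,2]$. Then every $\alpha^k\nl^{(k)}(B_1)$ is the explicit one-variable integral $\int_0^2\rho(r)(-\alpha r)^k e^{-\alpha r}r^{-1}\dd r$. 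The target is the uniform bound
\[
\sup_{\alpha>0}\frac{\alpha|\nl'(B_1)|+\alpha^3|\nl'''(B_1)|}{\nl(B_1)}<\infty .
\]
Once this holds, any $t_0$ with $t_0M_1+t_0^3M_3/6<1$ works, where $M_1$ and $M_3$ are the two suprema. The ratio is continuous on $(0,\infty)$, so only the endpoints need checking. As $\alpha\to0$ the numerators vanish while $\nl(B_1)\to\calN_0(B_1)>0$. As $\alpha\to\infty$, Laplace's method near $r=0$, with $\rho(r)\sim 4\pi|B_1|r^2$, gives $\alpha^k\nl^{(k)}(B_1)\sim(-1)^k(k+1)!\,4\pi|B_1|\alpha^{-2}$, so the ratios tend to $2$ and $24$.

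\emph{Item (iii).} From the representation above, $\nl''(B_1)=\iint|z|e^{-\alpha|z|}>0$ and $\nl'''(B_1)=-\iint|z|^2e^{-\alpha|z|}<0$, so these two signs are immediate. The other two combinations have integrands $(50-58a+15a^2-a^3)e^{-a}$ and $-(60a-30a^2+5a^3)e^{-a}$ times $1/|z|$, with $a=\alpha|z|$. These do not have a pointwise sign, so I would work with the closed forms instead: differentiate the formula in (i) (equivalently, integrate $\rho$ explicitly), multiply by $\alpha^{8}e^{\alpha}$ or a similar normalising factor, and obtain expressions of the form $P(\alpha)e^{2\alpha}+Q(\alpha)$ with explicit polynomials $P,Q$. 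The sign for all $\alpha>0$ would then be proved by expanding in a power series and checking that all Taylor coefficients have the required sign, or by a few differentiations reducing to elementary polynomial inequalities. Large-$\alpha$ asymptotics give a consistency check: the first combination tends to $(50-116+90-24)c\,\alpha^{-2}=0\cdot c\,\alpha^{-2}$ at leading order. This means the sign is decided at next order, which is exactly why these are delicate.

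I expect this last step to be the main obstacle: near-cancellations, as in the leading-order vanishing above, make crude bounds fail. I would handle it first by exact series expansion of $P(\alpha)e^{2\alpha}+Q(\alpha)$, and if needed by splitting into small and large $\alpha$.
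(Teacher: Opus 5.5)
Your proposal is correct. It departs from the paper mainly in (ii) and (iii).

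\textbf{Item (i).} The paper computes the ball potential by integrating in spherical coordinates. You instead solve $(-\Delta+\alpha^2)u=4\pi\chi_{B_1}$ radially with $C^1$ matching. Both give $u(r)=\frac{4\pi}{\alpha^2}\bigl(1-(1+\alpha)e^{-\alpha}\frac{\sinh(\alpha r)}{\alpha r}\bigr)$ and hence the same closed form.

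\textbf{Items (ii) and (iii).} The paper only asserts that these follow ``by direct computation'' from the closed form. You work instead from the representation $\alpha^k\nl^{(k)}(B_1)=\iint_{B_1\times B_1}(-\alpha|z|)^k e^{-\alpha|z|}|z|^{-1}\dd x\dd y$. This makes $\nl''(B_1)>0$ and $\nl'''(B_1)<0$ immediate. It also turns (ii) into a soft statement: a continuous ratio on $(0,\infty)$ with finite limits at $0^+$ and at $\infty$. You never have to manipulate the explicit exponential--polynomial expressions. The price is that your $t_0$ is not explicit, whereas the closed-form route could produce one; the statement only asks for existence, so this costs nothing.

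Two remarks on (iii).

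First, you are wrong that $60\nl'(B_1)+30\alpha\nl''(B_1)+5\alpha^2\nl'''(B_1)$ lacks a pointwise sign. Its integrand is $-5(a^2-6a+12)e^{-a}$ with $a=\alpha|z|$, and $a^2-6a+12=(a-3)^2+3>0$. So this inequality is as immediate from your representation as the signs of $\nl''$ and $\nl'''$.

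Second, the one combination that genuinely needs the closed form does close along the lines you propose. Set $D\defeq\alpha\,\dd/\dd\alpha$. Then
\[
50+58D+15D(D-1)+D(D-1)(D-2)=(D+2)(D+5)^2,
\]
which annihilates $\alpha^{-2}$ and $\alpha^{-5}$; this explains the leading-order cancellation you detected. Applying this operator to the formula in (i) gives
\[
50\nl(B_1)+58\alpha\nl'(B_1)+15\alpha^2\nl''(B_1)+\alpha^3\nl'''(B_1)=\frac{32\pi^2e^{-2\alpha}}{\alpha^3}\bigl(e^{2\alpha}-1-2\alpha-2\alpha^2+2\alpha^3\bigr).
\]
The bracket equals $\frac{10}{3}\alpha^3+\sum_{k\geq 4}(2\alpha)^k/k!>0$. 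This is exactly the ``all Taylor coefficients positive'' check you planned, and it agrees with your large-$\alpha$ next-order term $32\pi^2\alpha^{-3}$.
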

	
\begin{proof}
Without loss of generality assume that $y=(0,0,\rho)$. Then in spherical coordinates
	\begin{align*}
		\int_{B_1} \frac{e^{-\alpha |x-y|}}{|x-y|} \dd x &= 2\pi \int_0^1 \int_0^{\pi} \frac{e^{-\alpha \sqrt{\rho^2+r^2-2r\rho\cos(\varphi)}}}{\sqrt{\rho^2+r^2-2r\rho\cos(\varphi)}} \, r^2\sin(\varphi) \dd\varphi \dd r \\
											&=\frac{2\pi}{\rho}\int_0^1 r\int_{|\rho-r|}^{\rho+r} e^{-\alpha u} \dd u \dd r=\frac{2\pi}{\alpha \rho}\int_0^1 r(e^{-\alpha|\rho-r|}-e^{-\alpha(\rho+r)}) \dd r =f(\rho),
\end{align*}
where
	\[
		f(\rho) \defeq \frac{2\pi}{\alpha^3 \rho}\Big((\alpha+1)e^{-\alpha(1+\rho)}-(\alpha+1)e^{-\alpha(1-\rho)}+2\alpha\rho \Big).
	\]
Consequently, we have that $\nl(B_1)$ is given by
	\begin{align*}
		\nl(B_1)&=\int_{B_1}\int_{B_1} \frac{e^{-\alpha|x-y|}}{|x-y|} \dd x \dd y = 4\pi\int_0^1 \rho^2f(\rho) \dd\rho \nonumber \\
&=\frac{8\pi^2e^{-\alpha}}{3\alpha^5}\Big(\left(2\alpha^3-3\alpha^2+3\right) e^{\alpha}-3(\alpha+1)^2e^{-\alpha}\Big).
	\end{align*}
Items (ii) and (iii) then follow by direct computation.
\end{proof}

Defining the potential of the unit ball as
	\[
		v_\alpha(x) \defeq \int_{B_1} Y_\alpha(x-y)\dd y
	\]
we obtain the following lemma that will be used in the sequel as well. The radially symmetric decreasing property of $v_\alpha$ is a simple consequence of the layer cake representation (see for example \cite[Theorem 1.13]{LL}) and the bound on the Lipschitz constant follows from an explicit calculation.
	\begin{lemma}\label{lem:potential}
		For any $\alpha\geq 0$, the potential $v_\alpha$ is radial and radially decreasing. Moreover $\Lip(v_\alpha) \leq 4\pi$.
	\end{lemma}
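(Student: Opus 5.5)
Radial symmetry is immediate from the form of $v_\alpha$. Since $Y_\alpha$ is radial, for any rotation $R$ we get $v_\alpha(Rx)=\int_{B_1}Y_\alpha(Rx-y)\dd y=\int_{B_1}Y_\alpha(R(x-R^{-1}y))\dd y=v_\alpha(x)$, using $R^{-1}B_1=B_1$.

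For the monotonicity I will use Riesz-type rearrangement via the layer cake representation, as suggested. By Lemma~\ref{lem:kernel_prop}(i), $Y_\alpha$ is radial and strictly decreasing, so each superlevel set $\{Y_\alpha>s\}$ is a centered open ball $B_{r(s)}$ (empty for $s\ge \sup Y_\alpha$). Then
\[
v_\alpha(x)=\int_0^\infty |B_1\cap B_{r(s)}(x)|\dd s .
\]
For fixed radii $r,1$, the overlap $|B_1\cap B_r(x)|$ depends only on $|x|$ and is nonincreasing in $|x|$. This follows by an elementary argument: the intersection of two balls whose centers are at distance $d$ has volume nonincreasing in $d$. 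One way to see it is to note that the overlap volume is the convolution $\mathbf 1_{B_1}*\mathbf 1_{B_r}(x)$ of two symmetric decreasing functions, which is itself symmetric decreasing (a standard fact; cf.\ \cite[Theorem 3.7]{LL}). Integrating in $s$ then gives that $v_\alpha$ is radially nonincreasing. Alternatively, one can use the explicit formula $f(\rho)$ obtained in the proof of Lemma~\ref{lem: Properties of Yukawa B_1} for $\rho\le1$, together with an analogous formula for $\rho>1$.

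For the Lipschitz bound, differentiate under the integral; this is justified since $|\nabla Y_\alpha|\le |z|^{-2}$ is locally integrable in $\R^3$. This gives $\nabla v_\alpha(x)=\int_{B_1}\nabla Y_\alpha(x-y)\dd y$, and so
\[
|\nabla v_\alpha(x)|\le\int_{B_1}|\nabla Y_\alpha(x-y)|\dd y\le \int_{B_1}\frac{\dd y}{|x-y|^2}\le \int_{B_1}\frac{\dd y}{|y|^2}=4\pi .
\]
Here the middle inequality is Lemma~\ref{lem:kernel_prop}(ii). The last inequality is the rearrangement step of Lemma~\ref{lem:kernel_prop}(iv): among sets of volume $|B_1|$, the integral of the decreasing function $|x-\cdot|^{-2}$ is maximized by the ball centered at $x$, and its value there is $4\pi\int_0^1 \dd r=4\pi$. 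Since $v_\alpha$ is $C^1$ (or at least locally Lipschitz with a.e.\ gradient bounded by $4\pi$), we get $\Lip(v_\alpha)\le 4\pi$.

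The only delicate point is justifying differentiation under the integral sign near the singularity. I would handle it by approximating with the truncated kernels $\min(Y_\alpha,n)$, which are Lipschitz with the same gradient bound, and then passing to the limit via uniform convergence of the corresponding potentials. Uniform convergence holds because $\int_{B_1}(Y_\alpha-\min(Y_\alpha,n))\dd y\to0$ uniformly in $x$. Lipschitz bounds are preserved under uniform limits, which concludes the argument.
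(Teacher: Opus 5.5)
Your proposal is correct and takes essentially the same route as the paper. You get radial monotonicity from the layer cake representation (reducing it to the monotonicity of the overlap $|B_1\cap B_r(x)|$ in $|x|$), and the Lipschitz bound from $|\nabla Y_\alpha(z)|\le |z|^{-2}$ combined with the rearrangement estimate $\int_{B_1}|x-y|^{-2}\dd y\le\int_{B_1}|y|^{-2}\dd y=4\pi$, which is exactly Lemma~\ref{lem:kernel_prop}(ii) and (iv); your truncation argument also supplies the justification for differentiating under the integral that the paper leaves implicit.
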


\section{Existence of minimizers of \texorpdfstring{$\enf$}{F\_α}}\label{sec:exist_min_Falpha}

In this section we prove Theorems \ref{thm:exist_every_m} and \ref{thm:exist_every_alpha} following \cite{MNS}. The main tool in their approach is the boundary representation of the energy $\enf$, which isolates the effective tension and a nonnegative defect. For this, we define
	\[
		\psi_\alpha(r) \defeq \frac{(1+\alpha r)e^{-\alpha r}}{\alpha^2 r^2},
	\]
and for any $y\in\pt^* E$, we let
	\[
		H_y \defeq \big\{x \colon \nu_E(y)\cdot (x-y)<0\big\}
	\]
be the inner-tangent half space, where $\nu_E$ denotes the outer unit normal to $E$.

	\begin{proposition}[Boundary representation, cf. {\cite[Lemma 3.1]{MNS}}]\label{prop:bd_repres}
		For any $\alpha>0$, we have
			\[
				\Pe_\alpha(E) = \int_{\pt^* E} \int_{E} \psi_{\alpha}(|x-y|)\nu_E(y)\cdot \frac{y-x}{|y-x|} \dd x \dd \Htwo(y) = \sigma_\alpha \Pe(E) - \calD_\alpha(E),
			\]
		where
			\[
				\calD_\alpha(E) \defeq \int_{\pt^* E} \int_{\asymm{E}{H_y}} \psi_\alpha(|x-y|)\left|\nu_E(y)\cdot \frac{y-x}{|y-x|}\right| \dd x \dd\Htwo(y) \geq 0.
			\]
		Consequently, $\Pe_\alpha(E) \leq \sigma_\alpha \Pe(E)$ and $\enf(E) = \theta_\alpha \Pe(E) + \calD_\alpha(E) + 4\pi|E|/\alpha^2$.
	\end{proposition}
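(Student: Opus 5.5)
The plan is to write the nonlocal perimeter as a boundary integral via the divergence theorem, then compare each boundary contribution with the one coming from the tangent half-space $H_y$.

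\textbf{Step 1: a potential for the kernel.} Note that $\Pe_\alpha$ in the statement is $\calP_\alpha$ from Section~\ref{sec:notation_prelim}. The key identity is
\[
\dive_x\Big(\psi_\alpha(|x-y|)\,\frac{x-y}{|x-y|}\Big)=-Y_\alpha(x-y) \qquad\text{for } x\neq y .
\]
Since $\psi_\alpha$ is radial, this reduces to the ODE $\frac{1}{r^2}(r^2\psi_\alpha(r))'=-e^{-\alpha r}/r$. It holds because $r^2\psi_\alpha=(1+\alpha r)e^{-\alpha r}/\alpha^2$ has derivative $-re^{-\alpha r}$. The field has an $O(|x-y|^{-2})$ singularity, which is integrable in $\R^3$, and it decays exponentially at infinity.

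\textbf{Step 2: divergence theorem.} Fix $x\in E$ and apply Gauss--Green on $E^c$ for sets of finite perimeter, whose outer normal is $-\nu_E$:
\[
\int_{E^c} Y_\alpha(x-y)\dd y=-\int_{E^c}\dive_y(\cdots)\dd y .
\]
To justify this I will handle the singularity by noting that $x\in E$ is at positive distance from $E^c$ for a.e. Lebesgue point. Alternatively, I will approximate $\psi_\alpha$ by smooth truncations and pass to the limit by dominated convergence, using the local integrability of $|\nabla Y_\alpha|$ from Lemma~\ref{lem:kernel_prop}(ii). Integrating in $x\in E$ and applying Fubini gives the first equality,
\[
\calP_\alpha(E)=\int_{\pt^*E}\int_E\psi_\alpha(|x-y|)\,\nu_E(y)\cdot\frac{y-x}{|y-x|}\dd x\dd\Htwo(y).
\]
The sign must be checked carefully: for $x\in H_y$ the integrand is positive.

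\textbf{Step 3: half-space comparison.} For fixed $y$, split $E=(E\cap H_y)\cup(E\setminus H_y)$ and write $E\cap H_y=H_y\setminus(H_y\setminus E)$. On $H_y$ the integrand $\psi_\alpha\,\nu\cdot\frac{y-x}{|y-x|}$ is nonnegative, and off $H_y$ it is nonpositive. Hence
\[
\int_E(\cdots)=\int_{H_y}(\cdots)-\int_{\asymm{E}{H_y}}\psi_\alpha(|x-y|)\Big|\nu_E(y)\cdot\frac{y-x}{|y-x|}\Big|\dd x .
\]
The half-space integral is independent of $y$, and in polar coordinates it equals
\[
\int_0^\infty r^2\psi_\alpha(r)\dd r\cdot\int_{\text{hemisphere}}\cos\phi\dd\omega=\frac{2}{\alpha^3}\cdot\pi=\sigma_\alpha ,
\]
using $\int_0^\infty(1+\alpha r)e^{-\alpha r}\dd r/\alpha^2=2/\alpha^3$. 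Absolute convergence of the $\asymm{E}{H_y}$ integral near $x=y$ follows from $\psi_\alpha\sim r^{-2}$. Integrating over $\pt^*E$ yields $\calP_\alpha(E)=\sigma_\alpha\Pe(E)-\calD_\alpha(E)$ with $\calD_\alpha(E)\geq0$. Combining this with $\enf=\Pe-\calP_\alpha+4\pi|E|/\alpha^2$ gives the final formula.

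The main obstacle is Step 2. The vector field is singular at $x=y$ and only $L^1_{\loc}$, so the Gauss--Green formula for finite perimeter sets must be justified. I would first apply it with $\psi_\alpha$ replaced by $\psi_\alpha\,\eta(|x-y|/\e)$, where $\eta$ is a cutoff, and then let $\e\to0$ using $\int_E|x-y|^{-2}\dd x<\infty$ uniformly in $y$, which follows from Lemma~\ref{lem:kernel_prop}(iv) by scaling. The error terms from differentiating the cutoff are bounded by $\e^{-1}\e^{-2}\,|E^c\cap B_\e(x)|$, and integrated over $x\in E$ these tend to zero since $\int_E|E^c\cap B_\e(x)|\dd x\leq C\e^4\Pe(E)$.
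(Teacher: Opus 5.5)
Your proposal is correct and is essentially the paper's proof: Gauss--Green on $E^c$ for $y\mapsto \psi_\alpha(|x-y|)\frac{y-x}{|y-x|}$ (the paper phrases this via the potential $\Phi_\alpha=Y_\alpha/\alpha^2$) after cutting off the singularity at $y=x$, followed by the half-space splitting with $\int_{H_y}=\sigma_\alpha$. The differences are minor, with one correction:
\begin{itemize}
\item You dispose of the inner-shell error after integrating in $x$, via $\int_E|E^c\cap B_\e(x)|\dd x\leq C\e^4\Pe(E)$. The paper does it pointwise at density-one points using $|E^c\cap B_{2\e}(x)|=o(\e^3)$. Both work.
\item Since $E^c$ is unbounded, you should also cut off at infinity, as the paper does.
\item Your first suggestion, that a.e.\ point of $E$ lies at positive distance from $E^c$, is false for general sets of finite perimeter. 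A counterexample is $B_1$ with a dense sequence of small balls removed, $\sum r_k^2<\infty$. Drop it in favour of the cutoff argument.
\end{itemize}
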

	
\begin{proof}
Let $\Phi_\alpha=Y_\alpha/\alpha^2=e^{-\alpha|z|}/(\alpha^2|z|)$. Then $\nabla \Phi_\alpha(z)=-\psi_\alpha(|z|)\,z/|z|$, and
	\[
		\Delta \Phi_\alpha = \frac{\Delta Y_\alpha}{\alpha^2}=Y_\alpha \qquad \text{on } \R^3 \setminus \{0\}.
	\]
	
Now fix a point $x\in E$ of density one, and consider the function $y\mapsto \nabla_y \Phi_\alpha(x-y)$. Choose $\theta,\eta \in C^\infty(\R)$ such that
	\begin{gather*}
		\theta=0 \text{ on }(-\infty,1], \quad \theta=1 \text{ on } [2,\infty), \quad 0\leq \theta \leq 1,\\
		\eta=1 \text{ on }(-\infty,1], \quad \eta=0 \text{ on } [2,\infty), \quad 0\leq \eta \leq 1.
	\end{gather*}
For $0<\e<R$, define $\Xi_{\e,R}(y) \defeq \theta\left(|x-y|/\e\right)\eta\left(|x-y|/R\right)$. Then $\Xi_{\e,R}\equiv 0$ for $|x-y|\leq \e$ and for $|x-y|\geq 2R$; hence, the singularities at $y=x$ and $y=\infty$ are removed, and
	\[
		V_{\e,R}(y) \defeq \Xi_{\e,R}(y) \nabla_y \Phi_\alpha(x-y) \in C_c^1(\R^3).
	\]
By the divergence theorem, applied in $E^c$, we get
	\begin{align*}
		-\int_{\pt^* E} \Xi_{\e,R} \nabla_y\Phi_\alpha(x-y)\cdot \nu_E(y) \dd \Htwo(y) &= \int_{\pt^* E^c} V_{\e,R}(y) \cdot \nu_{E^c}(y) \dd \Htwo(y) \\
					&= \int_{E^c} \dive_y V_{\e,R}(y)\dd y =I_1+I_2,
	\end{align*}
where
	\[
		I_1:=\int_{E^c} \Xi_{\e,R} Y_\alpha(x-y)\dd y \qquad \text{and} \qquad I_2:= \int_{E^c} \nabla_y \Xi_{\e,R}(y) \cdot \nabla_y \Phi_\alpha(x-y)\dd y.
	\]
The dominated convergence theorem implies that
	\[
		I_1 \to \int_{E^c} Y_\alpha(x-y)\dd y \qquad \text{as }\e\to 0 \text{ and } R\to\infty.
	\]
In order to estimate $I_2$ we will write it as a sum of an inner shell and an outer shell. Precisely,
	\begin{align*}
		I_2 &= I_{2,\mathrm{inn}} + I_{2,\mathrm{out}} \\
			&\defeq \int_{E^c} \eta\left(\frac{|x-y|}{R}\right)\nabla_y\left[\theta\left(\frac{|x-y|}{\e}\right)\right] \cdot \nabla_y \Phi_\alpha(x-y)\dd y \\
			&\qquad\qquad + \int_{E^c} \theta\left(\frac{|x-y|}{\e}\right)\nabla_y\left[\eta\left(\frac{|x-y|}{R}\right)\right] \cdot \nabla_y \Phi_\alpha(x-y)\dd y.
	\end{align*}
The outer shell is supported on $\{R\leq |x-y|\leq 2R\}$ with $|\nabla_y \eta| \leq C/R$. On the support,
	\[
		|\nabla_y \Phi_\alpha(x-y)| = \psi_\alpha(|x-y|) \leq \frac{(1+2\alpha R)e^{-\alpha R}}{\alpha^2 R^2} \leq \frac{C e^{-\alpha R}}{\alpha R},
	\]
and the shell has volume $O(R^3)$. Therefore
	\[
		|I_{2,\mathrm{out}}| \leq \frac{C}{R} \frac{e^{-\alpha R}}{\alpha R} R^3 \to 0 \qquad \text{ as } R\to \infty.
	\]
The inner shell, on the other hand, is supported on $\{\e \leq |x-y| \leq 2\e\}$ with $|\nabla_y \theta| \leq C/\e$. On the support,
	\[
		|\nabla_y \Phi_\alpha(x-y)| = \psi_\alpha(|x-y|) \leq \frac{1}{\alpha^2 \e^2}.
	\]
Thus
	\[
		|I_{2,\mathrm{inn}}| \leq \frac{C}{\e} \frac{1}{\alpha^2\e^2} |E^c \cap B_{2\e}(x)| \to 0 \qquad \text{ as } \e \to 0,
	\]
since $|E^c \cap B_{2\e}(x)|=o(\e^3)$ due to the fact that $x$ is a point of density one. Putting these together yields $I_2 \to 0$ as $\e\to 0$ and $R\to \infty$.

Next, we will consider the boundary integral. First note that for almost every $x\in E$, $\int_{\pt^* E} \psi_\alpha(|x-y|)\dd \Htwo(y)$ is finite, since, by symmetric rearrangement,
	\[
		\int_E \int_{\pt^* E} \psi_\alpha(|x-y|)\dd \Htwo(y) \dd x = \int_{\pt^* E} \int_E \psi_\alpha(|x-y|)\dd x \dd \Htwo(y) \leq \frac{4\pi\rho}{\alpha^2}\Pe(E),
	\]
where $\rho$ is such that $|B_\rho|=|E|$. Then, by the dominated convergence theorem,
	\[
		\int_{\pt^* E} \Xi_{\e,R} \nabla_y\Phi_\alpha(x-y)\cdot \nu_E(y) \dd \Htwo(y) \to \int_{\pt^* E}  \nabla_y\Phi_\alpha(x-y)\cdot \nu_E(y) \dd \Htwo(y),
	\]
as $\e\to 0$ and $R\to \infty$. Therefore,
	\[
		\int_{E^c} Y_\alpha(x-y)\dd y = -\int_{\pt^* E}\nabla_y\Phi_\alpha(x-y)\cdot \nu_E(y) \dd \Htwo(y)=\int_{\pt^* E} \psi_\alpha(|x-y|)\,\nu_E(y)\cdot\frac{y-x}{|y-x|}\dd \Htwo(y),
	\]
and integrating over $E$ and using Fubini's theorem yields
	\[
		\Pe_\alpha(E)=\int_{\pt^* E}\int_E \psi_\alpha(|x-y|)\,\nu_E(y) \cdot \frac{y-x}{|y-x|}\dd x \dd \Htwo(y).
	\]
Fixing $y\in\pt^* E$, and letting $w=x-y$, we have that $x\in H_y$ if and only if $\nu_E \cdot w <0$. Hence,
	\begin{align*}
		\int_{H_y} \psi_\alpha(|x-y|)\nu_E(y)\cdot \frac{y-x}{|y-x|}\dd x &= \int_{\{\nu_E\cdot w<0\}} \psi_\alpha(|w|) \left|\nu_E \cdot \frac{w}{|w|}\right|\dd w  \\
									&= \int_0^\infty \psi_\alpha(r) r^2 \dd r \, \int_{\{\nu_E\cdot\omega<0\}} |\nu_E\cdot\omega|\dd \Htwo(\omega) = \frac{2\pi}{\alpha^3} = \sigma_\alpha,
	\end{align*}
which is independent of $y$. Now, note that on $E\,\setminus H_y$ we have $\nu_E\cdot(y-x)\leq 0$, so 	
	\[
		\psi_\alpha(|x-y|)\nu_E(y)\cdot \frac{y-x}{|y-x|}\leq 0,
	\]
and reverse inequalities hold on $H_y\setminus E$. Thus, writing $E=H_y\cup (H_y\setminus E) \cup (E\,\setminus H_y)$, we obtain
	\[
		\int_{E} \psi_\alpha(|x-y|)\nu_E(y)\cdot\frac{y-x}{|y-x|}\dd x =\sigma_\alpha - \int_{\asymm{E}{H_y}} \psi_\alpha(|x-y|)\left|\nu_E(y)\cdot\frac{y-x}{|y-x|}\right|\dd x.
	\]
Finally, integrating over $\pt^* E$ we get $\Pe_\alpha(E)=\sigma_\alpha \Pe(E)-\calD_\alpha(E)$.
\end{proof}

The next lemma gives the scaling and monotonicity properties of $\calD_\alpha$. It follows easily from the scaling properties of the perimeter, the kernel $Y_\alpha$, and the functional $\Pe_\alpha$ combined with the monotonicity of the function $t\mapsto (1+t)e^{-t}$.

	\begin{lemma}\label{lem:D_alpha_properties}
		For any $t>0$, $\calD_\alpha(tE) = t^5 \calD_{t\alpha}(E)$. Moreover, for any $0<\lambda\leq 1$ and $r>0$, $\lambda^2 \psi_{\lambda\alpha}(r) \geq \psi_\alpha(r)$, and $\lambda^2 \calD_{\lambda\alpha}(E) \geq \calD_\alpha(E)$ for any $E\subset \R^3$.
	\end{lemma}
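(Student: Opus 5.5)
The plan is to verify both claims directly from the definition of $\calD_\alpha$ in Proposition~\ref{prop:bd_repres}. The only $\alpha$-dependence there is through the radial weight $\psi_\alpha$. The geometric data $\pt^*E$, $\nu_E$, $H_y$ and $\asymm{E}{H_y}$ do not depend on $\alpha$.

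\emph{Scaling.} First record the identity
\[
\psi_\alpha(ts)=\frac{(1+t\alpha s)e^{-t\alpha s}}{\alpha^2t^2s^2}=\psi_{t\alpha}(s),\qquad s,t>0.
\]
Next, in $\calD_\alpha(tE)$ substitute $y=ty'$ with $y'\in\pt^*E$ and $x=tx'$. Then:
\begin{itemize}
\item $\pt^*(tE)=t\,\pt^*E$, and $\nu_{tE}(ty')=\nu_E(y')$.
\item The inner half space at $ty'$ for $tE$ is $tH_{y'}$, where $H_{y'}$ is taken with respect to $E$. Hence $\asymm{(tE)}{H_{ty'}}=t\,(\asymm{E}{H_{y'}})$.
\item The factor $\bigl|\nu\cdot\frac{y-x}{|y-x|}\bigr|$ is invariant under the dilation.
\end{itemize}
The Jacobians are $\dd\Htwo(y)=t^2\dd\Htwo(y')$ and $\dd x=t^3\dd x'$. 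Together with $\psi_\alpha(t|x'-y'|)=\psi_{t\alpha}(|x'-y'|)$, this gives $\calD_\alpha(tE)=t^5\calD_{t\alpha}(E)$.

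\emph{Monotonicity of the weight.} Let $g(\tau)=(1+\tau)e^{-\tau}$. Then $g'(\tau)=-\tau e^{-\tau}\le0$, so $g$ is nonincreasing on $[0,\infty)$. For $0<\lambda\le1$ we have $\lambda\alpha r\le\alpha r$, and therefore
\[
\lambda^2\psi_{\lambda\alpha}(r)=\frac{g(\lambda\alpha r)}{\alpha^2r^2}\ \ge\ \frac{g(\alpha r)}{\alpha^2r^2}=\psi_\alpha(r).
\]

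\emph{Monotonicity of $\calD$.} The integrand of $\calD_\alpha(E)$ is $\psi_\alpha(|x-y|)$ multiplied by the nonnegative, $\alpha$-independent factor $\bigl|\nu_E(y)\cdot\frac{y-x}{|y-x|}\bigr|$. The integration domain, $y\in\pt^*E$ and $x\in\asymm{E}{H_y}$, is also independent of $\alpha$. Replacing $\psi_\alpha$ by the pointwise larger $\lambda^2\psi_{\lambda\alpha}$ and integrating gives $\lambda^2\calD_{\lambda\alpha}(E)\ge\calD_\alpha(E)$. This holds also when the quantities are infinite.

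There is no real obstacle here. The only point needing care is checking that the tangent half spaces and the normals transform correctly under dilation.
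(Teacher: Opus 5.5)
Your proof is correct. For the two monotonicity claims you argue exactly as the paper does: the monotonicity of $\tau\mapsto(1+\tau)e^{-\tau}$ gives the pointwise bound $\lambda^2\psi_{\lambda\alpha}\geq\psi_\alpha$, which you then integrate against the nonnegative, $\alpha$-independent weight over the $\alpha$-independent domain.

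For the scaling identity you take a different route. You rescale the defining double integral of $\calD_\alpha$ directly, which means tracking how $\pt^*E$, $\nu_E$ and the half-spaces $H_y$ transform under dilation; your bookkeeping there is right.

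The paper instead reads the identity off the boundary representation $\calD_\alpha=\sigma_\alpha\Pe-\calP_\alpha$ from Proposition~\ref{prop:bd_repres}. It combines this with $\Pe(tE)=t^2\Pe(E)$, $\sigma_\alpha=t^3\sigma_{t\alpha}$, and $\calP_\alpha(tE)=t^5\calP_{t\alpha}(E)$, the last following from $Y_\alpha(tz)=t^{-1}Y_{t\alpha}(z)$.

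Each route has its advantage. The paper's avoids any geometric bookkeeping. However, it depends on Proposition~\ref{prop:bd_repres} and needs $\Pe(E)$ and $|E|$ finite, so that the difference is not of the form $\infty-\infty$. Your argument is self-contained and works for the nonnegative integral in every case, including when it is infinite.
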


Finally, the core of the proof of Theorem~\ref{thm:exist_every_m} relies on the following compactness result which is a classical consequence of strict subadditivity, and can be obtained by arguing as in \cite[Proposition 2.1, Lemma 2.2, Theorem 3.1]{FL} or \cite[Proposition 1.2, Lemma 3.4]{NP} combined with the continuity of $\nl$.

	\begin{proposition}\label{prop:compactness}
		For any $\alpha\geq 0$ and $m>0$, if
			\[
				e_\alpha(m) < e_\alpha(\xi) + e_\alpha(m-\xi) \qquad \text{ for all }\xi\in(0,m),
			\]
		then $e_\alpha(m)$ is attained.
	\end{proposition}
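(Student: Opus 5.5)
The plan is to run the concentration--compactness argument of \cite{FL,NP}. Take a minimizing sequence $(E_n)$ with $|E_n|=m$ and $\enf(E_n)\to e_\alpha(m)$. Since $\nl\geq 0$ and $\enf(E_n)\leq \enf(B[m])+1$ for large $n$, the perimeters $\Pe(E_n)$ are uniformly bounded. The proof then has three steps: exclude vanishing, exclude dichotomy using the strict inequality, and conclude by lower semicontinuity.

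\emph{Non-vanishing.} Tile $\R^3$ by unit cubes $Q_z=z+[0,1)^3$, $z\in\mathbb Z^3$. The relative isoperimetric inequality in a cube gives $\min\{|E\cap Q_z|,1/2\}^{2/3}\leq C\,\Pe(E;\mathrm{int}\,Q_z)$. If some cube has $|E_n\cap Q_z|\geq 1/2$ we are done; otherwise, summing over the cubes,
\[
m=\sum_z |E_n\cap Q_z|\leq \sup_z|E_n\cap Q_z|^{1/3}\sum_z|E_n\cap Q_z|^{2/3}\leq C\sup_z|E_n\cap Q_z|^{1/3}\,\Pe(E_n).
\]
Hence $\sup_z|E_n\cap Q_z|\geq c(m)>0$ uniformly in $n$. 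After translating, $|E_n\cap Q_0|\geq c$. By BV compactness, up to a subsequence $E_n\to E$ in $L^1_{\loc}$ with $\xi\defeq|E|\in[c,m]$.

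\emph{Excluding dichotomy.} Suppose $\xi<m$. A diagonal argument gives radii $R_n\to\infty$ with $|E_n\cap B_{R_n}|\to\xi$ and $|E_n\cap(B_{4R_n}\setminus B_{R_n})|\to0$. By the coarea formula, averaging over $r\in(R_n,2R_n)$ yields $r_n$ with $\Htwo(E_n\cap\pt B_{r_n})\leq |E_n\cap(B_{2R_n}\setminus B_{R_n})|/R_n\to0$. Set $F_n=E_n\cap B_{r_n}$ and $G_n=E_n\setminus B_{r_n}$. Then $\Pe(E_n)\geq \Pe(F_n)+\Pe(G_n)-o(1)$ and
\[
\nl(E_n)=\nl(F_n)+\nl(G_n)+2\int_{F_n}\int_{G_n}Y_\alpha\geq \nl(F_n)+\nl(G_n),
\]
since $Y_\alpha>0$. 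This inequality is the direction we need, so the long-range case $\alpha=0$ causes no trouble and no estimate on the cross term is required. Thus
\[
e_\alpha(m)\geq \liminf_n \big(e_\alpha(|F_n|)+e_\alpha(|G_n|)\big),
\]
with $|F_n|\to\xi$ and $|G_n|\to m-\xi$. To pass to the limit we need continuity of $m\mapsto e_\alpha(m)$. I would get it from the dilation identity \eqref{eqn:dilation}: $\Pe(tE)=t^2\Pe(E)$, and $\nl(tE)=t^5\calN_{t\alpha}(E)$ depends continuously on $t$ uniformly over sets of bounded perimeter and volume, since $|\calN_{t\alpha}-\nl|\leq C\,|t-1|\,\calN_0$ on bounded sets and $\calN_0\leq C_2|E|^{5/3}$ by Lemma~\ref{lem:kernel_prop}(v). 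Comparing competitors at volumes $|F_n|$ and $\xi$ via $t=(\xi/|F_n|)^{1/3}$, and likewise for $G_n$, gives $e_\alpha(m)\geq e_\alpha(\xi)+e_\alpha(m-\xi)$. This contradicts the hypothesis, so $\xi=m$.

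\emph{Conclusion.} With $\xi=m$ no mass escapes, so $E_n\to E$ in $L^1(\R^3)$ and $|E|=m$. Perimeter is lower semicontinuous. The term $\nl$ is continuous under $L^1$ convergence with uniformly bounded mass: split $Y_\alpha=Y_\alpha\mathbf 1_{B_\delta}+Y_\alpha\mathbf 1_{B_\delta^c}$, bound the first part by $m\|Y_\alpha\mathbf 1_{B_\delta}\|_{L^1}$, and treat the second, bounded part as Lipschitz in $L^1$. Therefore $\enf(E)\leq\liminf\enf(E_n)=e_\alpha(m)$, and $E$ is a minimizer. I expect the main technical obstacle to be the dichotomy step, specifically the uniform continuity of $e_\alpha$ in the volume. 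This matters because the kernel is not homogeneous, so rescaling also changes $\alpha$. I would handle it through the uniform estimate on $|\calN_{t\alpha}-\nl|$ above, combined with the a priori perimeter bound.
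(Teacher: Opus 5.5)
Your proposal is correct and follows the route the paper indicates. The paper gives no self-contained proof; it defers to the concentration--compactness arguments of \cite{FL,NP} ``combined with the continuity of $\nl$''. Your three steps carry out exactly that argument: non-vanishing via the relative isoperimetric inequality on unit cubes, excluding dichotomy by cutting along a good sphere and dropping the nonnegative cross term, and concluding by lower semicontinuity of $\Pe$ and $L^1$-continuity of $\nl$. Your treatment of the continuity of $e_\alpha$ in the volume, via $\nl(tE)=t^5\calN_{t\alpha}(E)$ and the pointwise bound $|Y_{t\alpha}-Y_\alpha|\leq C|t-1|\,Y_0$, is precisely the extra ingredient the non-homogeneous kernel requires. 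That bound holds for every set of finite measure, so you do not need $F_n$ or $G_n$ to be bounded.
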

We are now ready to prove Theorem \ref{thm:exist_every_m}. 
  
\begin{proof}[Proof of Theorem~\ref{thm:exist_every_m}] In light of Proposition \ref{prop:bd_repres} we have $\eng=\theta_\alpha \Pe + \calD_\alpha$. By Lemma~\ref{lem:D_alpha_properties}, we have that
	\[
		\calD_\alpha(t^{1/3}E) = t^{5/3}\calD_{t^{1/3}\alpha}(E) \geq t\calD_\alpha(E).
	\]
Adding $\theta_\alpha t^{2/3}\Pe(E)$ to both sides then yields
	\begin{align*}
		\eng(t^{1/3}E) &= \theta_\alpha \Pe(t^{1/3}E) + \calD_\alpha(t^{1/3}E) = \theta_\alpha t^{2/3}\Pe(E) + \calD_\alpha(t^{1/3}E) \\
					   &\geq \theta_\alpha(t+\varphi(t))\Pe(E) + t\calD_\alpha(E) = t\eng(E)+\theta_\alpha\varphi(t)\Pe(E),
	\end{align*}
where $\varphi(t)\defeq t^{2/3}-t>0$ on $(0,1)$. Now, letting $t=m'/m$ for $0<m'<m$ and taking the infimum of both sides over sets of volume $m$ we obtain
	\begin{equation}\label{eqn:g_alpha_ineq}
		g_\alpha(m') \geq \frac{m'}{m}g_\alpha(m) + \theta_\alpha C_1 m^{2/3}\varphi\left(\frac{m'}{m}\right),
	\end{equation}
where we also used the inequality $\Pe(E) \geq C_1 m^{2/3}$. Adding $4\pi m'/\alpha^2$ to both sides, and using the fact that $\varphi(m'/m)>0$ we get that
	\[
		\frac{e_\alpha(m')}{m'} > \frac{e_\alpha(m)}{m};
	\]
hence, $m^{-1}e_\alpha(m)$ is strictly decreasing.

For $0<\xi<m$, we can apply the inequality \eqref{eqn:g_alpha_ineq} with $m'=\xi$ and $m'=m-\xi$. Adding these two inequalities we obtain
	\[
		e_\alpha(\xi) + e_\alpha(m-\xi) \geq e_\alpha(m) + \theta_\alpha C_1 m^{2/3}\left[\varphi\left(\frac{\xi}{m}\right)+\varphi\left(\frac{m-\xi}{m}\right)\right];
	\]
hence, again thanks to the positivity of $\varphi$, we get the strict binding inequality
	\[	
		e_\alpha(m) < e_\alpha(\xi) + e_\alpha(m-\xi),
	\]
and the result follows from Proposition~\ref{prop:compactness}.
\end{proof}

\begin{remark} The only estimates used to prove the strict binding inequality above come from Lemma~\ref{lem:D_alpha_properties} and the isoperimetric inequality. By quantifying these, one could potentially prove a stronger strict binding inequality. For instance, we have that 
	\[
		\lambda^2\psi_{\lambda\alpha}(r)-\psi_{\alpha}(r) = \frac{1}{\alpha^2r^2}\int_{\lambda\alpha r}^{\alpha r} te^{-t} \dd t \geq \frac{1-\lambda^2}{2}e^{-\alpha r},
	\]
which leads to the corresponding estimate for $\calD_{\alpha}$ as
	\[
		\lambda^2\calD_{\lambda \alpha}(E)\geq \calD_{\alpha}(E)+\frac{1-\lambda^2}{2}\int_{\partial^*E}\int_{E\Delta H_y} e^{-\alpha |x-y|} \left|\nu_E(y)\cdot \frac{x-y}{|x-y|}\right| \dd x\dd \mathcal{H}^2(y).
	\]
If we na\"{i}vely use the argument of Proposition \ref{prop:bd_repres} in reverse to rewrite the latter integral, then 
	\[
		\int_{\partial^*E}\int_{E\Delta H_y} e^{-\alpha |x-y|} \left|\nu_E(y)\cdot \frac{x-y}{|x-y|}\right| \dd x\dd \mathcal{H}^2(y) = \sigma_{\alpha}\Pe(E)-\frac{1}{\alpha}\frac{\dd}{\dd \alpha}\left(\alpha^2\nl(E)\right);
	\]	
however, it is unclear if this gives a better result. 
\end{remark}

Now we turn to the proof of Theorem~\ref{thm:exist_every_alpha}. For this we will follow the arguments in \cite{FN} and utilize the following lemma, whose proof requires a minor modification of \cite[Lemma 5]{FN}.

	\begin{lemma}
		For any $\alpha\geq 0$ and $0<m'<m$, if $t=m'/m$, then
			\begin{equation}\label{eqn:min_at_m'}
				e_\alpha(m') \geq t^{5/3}e_\alpha(m) + (1-t)t^{2/3}C_1 m^{2/3}.
			\end{equation}
	\end{lemma}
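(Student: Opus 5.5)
Given a set $E$ with $|E|=m$, I will compare it with the dilated set $t^{1/3}E$, which has volume $m'=tm$. Dilation rescales the two parts of the energy differently, and that difference is what produces the correction term.

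The perimeter term is straightforward: $\Pe(t^{1/3}E)=t^{2/3}\Pe(E)$.

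For the nonlocal term, the dilation formula of Remark~\ref{rem:master} with $t^{1/3}$ in place of $t$ gives
\[
\nl(t^{1/3}E)=t^{5/3}\calN_{t^{1/3}\alpha}(E).
\]
Since $t^{1/3}<1$ we have $t^{1/3}\alpha\le\alpha$. The kernel $Y_\alpha$ is pointwise decreasing in $\alpha$, so $\calN_{t^{1/3}\alpha}(E)\ge\nl(E)$. This monotonicity goes the wrong way for a simple upper bound on $e_\alpha(m')$. I therefore run the argument in the other direction, following \cite[Lemma 5]{FN}: start from an arbitrary competitor $F$ with $|F|=m'$ and dilate it to $s F$ with $s=t^{-1/3}>1$, so that $|sF|=m$.

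The key estimate is then
\[
\nl(sF)=s^5\calN_{s\alpha}(F)\le s^5\nl(F),
\]
which holds because $s\alpha\ge\alpha$ and $Y_{s\alpha}\le Y_\alpha$ pointwise. This is the only place where the lack of homogeneity of the Yukawa kernel enters, and the inequality has the favourable sign.

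With this in hand,
\[
e_\alpha(m)\le\enf(sF)\le s^2\Pe(F)+s^5\nl(F)=s^5\enf(F)-(s^5-s^2)\Pe(F).
\]
I multiply through by $t^{5/3}=s^{-5}$. Since $s^{-3}=t$, this gives
\[
t^{5/3}e_\alpha(m)\le\enf(F)-(1-t)\Pe(F).
\]

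Next I apply the isoperimetric inequality to $F$, namely $\Pe(F)\ge C_1 m'^{2/3}=C_1t^{2/3}m^{2/3}$. This turns the previous bound into
\[
\enf(F)\ge t^{5/3}e_\alpha(m)+(1-t)t^{2/3}C_1m^{2/3}.
\]
Taking the infimum over all $F$ with $|F|=m'$ yields \eqref{eqn:min_at_m'}.

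The only delicate point is choosing the direction of the dilation: we must enlarge $F$ rather than shrink $E$, so that the kernel monotonicity from Lemma~\ref{lem:kernel_prop} goes the right way. The step $s^5\calN_{s\alpha}\le s^5\nl$ is exactly the ``minor modification'' of \cite[Lemma 5]{FN}. Everything else is algebra.
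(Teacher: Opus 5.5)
Your proof is correct and follows the paper's argument: dilate an arbitrary competitor of volume $m'$ by $t^{-1/3}$, use $\calN_{t^{-1/3}\alpha}\le\nl$ (since $Y_{s\alpha}\le Y_\alpha$ for $s\ge1$), apply the isoperimetric inequality, multiply by $t^{5/3}$ and take the infimum. Your opening paragraph about shrinking $E$ is a dead end you abandon, so it can simply be deleted.
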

	
\begin{proof}
Let $E\subset\R^3$ be a set such that $|E|=m'$. Then $|t^{-1/3}E|=m$, and $\Pe(t^{-1/3}E)=t^{-2/3}\Pe(E)$, and
	\[
		\nl(t^{-1/3}E)=t^{-5/3}\calN_{t^{-1/3}\alpha}(E) \leq t^{-5/3}\nl(E),
	\]
since $t^{-1/3}\alpha\geq \alpha$ (for $\alpha=0$ we have an equality above and recover the homogeneous case of \cite{FN}). Therefore
	\[
		e_\alpha(m) \leq \enf(t^{-1/3}E) = t^{-5/3}\enf(E) - \big(t^{-5/3}-t^{-2/3}\big)\Pe(E).
	\]
Since $t<1$ then $t^{-5/3}-t^{-2/3}>0$, and by the isoperimetric inequality, $\Pe(E)\geq C_1 t^{2/3}m^{2/3}$; hence,
	\[
		e_\alpha(m) \leq t^{-5/3}\enf(E) - \big(t^{-5/3}-t^{-2/3}\big)t^{2/3} C_1 m^{2/3}.
	\]
Taking the infimum over sets of volume $m'$ and multiplying both sides by $t^{5/3}$ yields the inequality \eqref{eqn:min_at_m'}.
\end{proof}

We now prove Theorem \ref{thm:exist_every_alpha} following \cite[Theorem 4]{FN}.

\begin{proof}[Proof of Theorem~\ref{thm:exist_every_alpha}]
We apply the previous lemma twice, with $m'=\xi$ and $m'=m-\xi$. Then adding both inequalities and letting $t=\xi/m$, we obtain
	\begin{multline*}
		e_\alpha(\xi)+e_\alpha(m-\xi)-e_\alpha(m) \\ \geq \big(t^{5/3}+(1-t)^{5/3}-1\big)e_\alpha(m) + \big((1-t)t^{2/3}+t(1-t)^{2/3}\big)C_1 m^{2/3}.
	\end{multline*}
Since $t^{5/3}+(1-t)^{5/3}-1<0$, by using the ball of volume $m$ as a competitor,
	\begin{align*}
		e_\alpha(\xi)+e_\alpha(m-\xi)&-e_\alpha(m) \geq C_1 m^{2/3}\Big[\big(t^{5/3}+(1-t)^{5/3}-1\big)+\big((1-t)t^{2/3}+t(1-t)^{2/3}\big)\Big] \\
						&\qquad\qquad\qquad\qquad\qquad\qquad\qquad\qquad\quad + \nl(B[m])\big(t^{5/3}+(1-t)^{5/3}-1\big) \\
						&\qquad =C_1\big(t^{2/3}+(1-t)^{2/3}-1\big)m^{2/3}+\big(t^{5/3}+(1-t)^{5/3}-1\big)\nl(B[m]).
	\end{align*}
Since $\nl(B[m])\leq \calN_0(B[m])=C_2m^{5/3}$ and, by \cite[Eq. (9)]{FN},
	\[
		\inf_{t\in(0,1)} \frac{t^{2/3}+(1-t)^{2/3}-1}{1-t^{5/3}-(1-t)^{5/3}}=\frac{2^{1/3}-1}{1-2^{-2/3}},
	\]
we get that the right-hand side of the inequality above is positive for $m<\bar{m}$ with $\bar{m} $ defined in \eqref{eqn:constants}. Thus we have the strict binding inequality
	\[
		e_\alpha(m) < e_\alpha(\xi)+e_\alpha(m-\xi),
	\]
which, in turn, implies the theorem via Proposition~\ref{prop:compactness}.
\end{proof}

\begin{remark}[Optimality of $\bar{m}$]\label{rem:optimal}
The threshold $\bar m$ is exactly the volume at which a ball has the same energy for $\alpha=0$ as two balls of half the volume placed infinitely far apart. This is a ceiling for any strict-binding argument: beyond it, if balls are minimizers then $e_0(m)>2e_0(m/2)$ and strict subadditivity is false. In \cite{FN}, where the nonlocal term has a factor of $1/2$, the authors obtain the same threshold $m_*=2\bar m$.
\end{remark}

\begin{remark}[An $\alpha$-dependent refinement]
The kernel enters Theorem~\ref{thm:exist_every_alpha} only through the ball competitor, and with its closed form $\nl(B_R)=16\pi^2/\alpha^5\,\Psi(\alpha R)$, where $\Psi(u)=u^3/3-(1+u)(u\cosh u-\sinh u)e^{-u}$, it yields thresholds $m(\alpha)$ strictly larger than $\bar{m}$ for every $\alpha>0$. However $\nl(B[m])$ decreases in $\alpha$, so the infimum over $\alpha$ is achieved when $\alpha=0$ and a threshold uniform in $\alpha$ cannot be improved this way. Moreover $m(\alpha)$ is increasing, whereas for $\sigma_\alpha<1$ existence already holds at every volume by Theorem~\ref{thm:exist_every_m}. 
\end{remark}


\section{Minimality of balls for \texorpdfstring{$\enf$}{F\_α}}\label{sec:min_balls}	
In this section we prove Theorem~\ref{thm:global_min}.  We first observe owing to Remark \ref{rem:master} that we can instead consider the functional
	\[
		\ene(E) = \Pe(E)+\beta \calN_{\alpha}(E)
	\]
for $E\subset \mathbb{R}^3$ with $|E|=|B_1|$. With this reformulation, Theorem~\ref{thm:global_min} follows from
\begin{theorem}\label{thm:global_min_beta}
		There exists $\beta_0>0$ (independent of $\alpha$) such that for any $\alpha\geq 0$ and $\beta<\beta_0$, the unique minimizer, up to translations, of $\ene$ over sets $|E|=|B_1|$ is the unit ball $B_1$. 
	\end{theorem}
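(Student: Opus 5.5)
The argument follows the Carazzato--Fusco--Pratelli scheme \cite{CFP}, which reduces global minimality for small $\beta$ to a quantitative stability estimate. The key point is to use only properties of $Y_\alpha$ that hold uniformly in $\alpha$, namely those of Lemma~\ref{lem:kernel_prop}: $Y_\alpha\leq Y_0$, $|\nabla Y_\alpha|\leq |z|^{-2}$, and the uniform potential bounds in item (iv). No homogeneity of the kernel is used.

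\textbf{Step 1 (existence and compactness).} By Remark~\ref{rem:master}, $\ene$ on $|E|=|B_1|$ corresponds to $\enf$ at volume $m=\beta|B_1|$ with rescaled screening. For $\beta<\bar m/|B_1|$, Theorem~\ref{thm:exist_every_alpha} then gives a minimizer $E_\beta$ for every $\alpha$.

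Comparing with $B_1$ and using item (v),
\[
\Pe(E_\beta)-\Pe(B_1)\leq \beta\big(\nl(B_1)-\nl(E_\beta)\big)\leq \beta\,\frac{32\pi^2}{15}.
\]
The quantitative isoperimetric inequality then yields $\min_x|E_\beta\triangle (B_1+x)|\leq C\sqrt{\beta}$, with $C$ independent of $\alpha$.

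\textbf{Step 2 (regularity).} Minimizers of $\ene$ are $(\Lambda,r_0)$-minimizers of the perimeter, where $\Lambda$ depends only on $\beta$ times the Lipschitz bound of the potential $\int_E Y_\alpha(\cdot-y)\dd y$. Item (iv) of Lemma~\ref{lem:kernel_prop} controls this bound uniformly in $\alpha$; rescaling handles $|E|=|B_1|>$ the volume in (iv). The volume constraint is removed in the standard way, by a penalization or by volume-fixing variations with constants depending only on $\Lambda$.

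Uniform $\varepsilon$-regularity, together with the $L^1$ closeness from Step 1, then shows that for $\beta$ small, $\partial E_\beta$ is a $C^{1,\gamma}$ normal graph over $\partial B_1$: $\partial E_\beta=\{(1+u(x))x\}$ with $\|u\|_{C^{1,\gamma}}\to0$ as $\beta\to0$, uniformly in $\alpha$. Translating so that the barycenter is $0$ costs nothing.

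\textbf{Step 3 (Fuglede-type comparison).} For nearly spherical sets $E$ with $|E|=|B_1|$ and barycenter $0$, Fuglede's estimate gives
\[
\Pe(E)-\Pe(B_1)\geq c\,\|u\|_{H^1(\mathbb{S}^2)}^2 .
\]
It remains to prove the Lipschitz-type bound
\[
|\nl(E)-\nl(B_1)|\leq C\|u\|_{H^1}^2,
\]
or at least $\leq C\|u\|_{L^2}\|u\|_{L^2}$-type quadratic control, with $C$ independent of $\alpha$. This is the main obstacle, since the first-order term does not vanish trivially.

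To handle it, write
\[
\nl(E)-\nl(B_1)=2\int_{E\triangle B_1}\pm v_\alpha+\iint Y_\alpha\, (\chi_E-\chi_{B_1})(\chi_E-\chi_{B_1}).
\]
The second term is bounded by $C\|u\|_{L^2}^2$ using $Y_\alpha\leq Y_0$ together with the Riesz/Hardy--Littlewood--Sobolev bound, or simply by item (iv) applied on the thin shell $E\triangle B_1$.

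For the first term, use that $v_\alpha$ is radial and Lipschitz with constant $4\pi$ (Lemma~\ref{lem:potential}). Its integral equals
\[
\int_{\mathbb{S}^2}\int_1^{1+u} v_\alpha(r)r^2\dd r\dd\Htwo .
\]
Subtracting $v_\alpha(1)$ times the volume-constraint identity $\int_{\mathbb{S}^2}\int_1^{1+u}r^2\dd r=0$ leaves
\[
\int_{\mathbb{S}^2}\int_1^{1+u}\big(v_\alpha(r)-v_\alpha(1)\big)r^2\dd r,
\]
which is bounded by $4\pi\int u^2$, uniformly in $\alpha$.

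\textbf{Conclusion.} Combining Steps 2 and 3,
\[
0\geq \ene(E_\beta)-\ene(B_1)\geq (c-C\beta)\|u\|_{H^1}^2 .
\]
Choosing $\beta_0$ small forces $u=0$, so $E_\beta=B_1$ up to translation. Uniqueness follows because every minimizer satisfies the same chain of inequalities.
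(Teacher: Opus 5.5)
Your overall architecture is the paper's: existence, uniform regularity, Fuglede's inequality, and a quadratic bound on the nonlocal deficit. A few of your steps are fine as they stand:
\begin{itemize}
\item You obtain existence from Theorem~\ref{thm:exist_every_alpha} via scaling. The paper only says ``let $E$ be a minimizer'', so this is a useful addition.
\item Your Step 2 is Proposition~\ref{prop:regularity}. For the $\Lambda$-minimality you need the $L^\infty$ bound on $\int_E Y_\alpha(\cdot-y)\dd y$ rather than its Lipschitz constant, but item (iv) gives both.
\item Your treatment of the linear term $2\int(\chi_E-\chi_{B_1})v_\alpha$ is exactly the one in Lemma~\ref{lem:nonloc_L2_control}. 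You subtract $v_\alpha(1)$ using the volume constraint and then use $\Lip(v_\alpha)\le 4\pi$.
\end{itemize}

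The gap is in the quadratic interaction term $\iint Y_\alpha hh$, with $h=\chi_E-\chi_{B_1}$. Neither tool you cite bounds it by $C\|u\|_{L^2}^2$:
\begin{itemize}
\item Item (iv) applied to $F=E\triangle B_1$ gives $\iint Y_\alpha|h||h|\le 2\pi|E\triangle B_1|$, which is comparable to $\|u\|_{L^1}$.
\item Hardy--Littlewood--Sobolev gives a bound of order $\|\chi_{E\triangle B_1}\|_{L^{6/5}}^2=|E\triangle B_1|^{5/3}$.
\end{itemize}
Both are rearrangement-type bounds that forget that $E\triangle B_1$ is a thin shell. If $|u|\approx\delta$ on a region of fixed area, then $\|u\|_{L^2}^2\sim\delta^2$, while these bounds are of order $\delta$ and $\delta^{5/3}$. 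They cannot be absorbed into $c\|u\|_{H^1}^2$, so your final inequality $(c-C\beta)\|u\|_{H^1}^2$ does not follow as written.

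The missing observation is that you only need a one-sided estimate. The argument requires an upper bound on
\[
\nl(B_1)-\nl(E)=-2\int h\,v_\alpha-\iint Y_\alpha(x-y)h(x)h(y)\dd x\dd y,
\]
so it suffices to know $\iint Y_\alpha hh\ge 0$. This is the positive definiteness of $Y_\alpha$ in Lemma~\ref{lem:kernel_prop}(iii), where $\widehat{Y_\alpha}(k)=4\pi/(\alpha^2+|k|^2)>0$. The quadratic term can then simply be dropped, which is exactly how the paper obtains the $\alpha$-independent constant $9\pi$ in Lemma~\ref{lem:nonloc_L2_control}.

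If you want a genuine two-sided bound instead, you must use the shell geometry:
\begin{itemize}
\item Write $x=r\theta$ and $y=\rho\theta'$ with $r,\rho\in[\tfrac12,\tfrac32]$. Then $|x-y|\ge\tfrac12|\theta-\theta'|$.
\item Hence, using $Y_\alpha\le Y_0$, you get $\iint Y_\alpha|h||h|\le C\int_{\partial B_1}\int_{\partial B_1}|u(\theta)||u(\theta')|\,|\theta-\theta'|^{-1}\dd\Htwo(\theta')\dd\Htwo(\theta)$.
\item Schur's test bounds the right side by $C'\|u\|_{L^2}^2$, since $\int_{\partial B_1}|\theta-\theta'|^{-1}\dd\Htwo(\theta')=4\pi$. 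This is uniform in $\alpha$.
\end{itemize}
With either repair, the rest of your argument goes through.
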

	
\begin{proof}[Proof of Theorem~\ref{thm:global_min}] For $m>0$ and $\alpha\geq 0$, by scaling, see \eqref{eqn:dilation}, we have that $B[m]$ minimizes $\enf(E)$ among sets $|E|=m$ if and only if $B_1$ minimizes $\calE_{\alpha',\beta}(E)$ among sets $|E|=|B_1|$, where $\alpha'=\alpha\beta^{1/3}$ with $\beta = m/|B_1|$. Hence taking $m_0=\beta_0|B_1|$ we have $\beta<\beta_0$ whenever $m<m_0$, and Theorem~\ref{thm:global_min_beta} gives the result.
\end{proof}

We prove Theorem~\ref{thm:global_min_beta} following the arguments in \cite{CFP}. The main tool in establishing the theorem is the following lemma which improves the deficit of the nonlocal term when the set $E$ is nearly spherical. While in \cite{CFP} the authors prove a similar result by explicitly controlling the interactions between $E\,\setminus B_1$ and $B_1\setminus E$, here the positive definiteness of $Y_\alpha$ simplifies the proof.

	\begin{lemma}\label{lem:nonloc_L2_control}
		Let $\alpha\geq 0$ and suppose $E$ is a nearly spherical set defined as
			\[
				E = \big\{ tx\in \R^3 \colon x\in\pt B_1, \ 0\leq t<1+u(x) \big\}
			\]
		for some $u\in C^1(\pt B_1)$ such that $|E|=|B_1|$ and $\|u\|_{L^{\infty}(\pt B_1)}\leq 1/2$. Then
			\[
				\nl(B_1)-\nl(E) \leq 9\pi\|u\|_{L^2(\pt B_1)}^2.
			\]
	\end{lemma}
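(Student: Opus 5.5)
Write $f=\chi_E-\chi_{B_1}$, so that $f=\chi_{E\setminus B_1}-\chi_{B_1\setminus E}$ and $\int f=0$ because $|E|=|B_1|$. Expanding the quadratic form gives
\[
\nl(E)=\nl(B_1)+2\int f\,v_\alpha\dd x+\iint Y_\alpha(x-y)f(x)f(y)\dd x\dd y .
\]
By Lemma~\ref{lem:kernel_prop}(iii) the last term is nonnegative, since $f$ is bounded with compact support. Hence
\[
\nl(B_1)-\nl(E)\leq -2\int f\,v_\alpha\dd x=2\Big(\int_{B_1\setminus E}v_\alpha-\int_{E\setminus B_1}v_\alpha\Big).
\]
This is the simplification mentioned before the statement. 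It remains to bound the linear term by $9\pi\|u\|_{L^2}^2$.

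For the linear term we use that $v_\alpha$ is radial and decreasing, with $\Lip(v_\alpha)\leq 4\pi$ (Lemma~\ref{lem:potential}). Write $v_\alpha(x)=g(|x|)$. Since $\int f=0$, we may subtract the constant $g(1)$:
\[
-\int f v_\alpha=\int f\,(g(1)-g(|x|))\dd x .
\]
Pass to polar coordinates. For each $\omega\in\pt B_1$ the radial contribution is $\int_1^{1+u(\omega)}(g(1)-g(t))t^2\dd t$, with the usual orientation when $u<0$. When $u(\omega)>0$, we have $t>1$ and $g(1)-g(t)\geq 0$; when $u(\omega)<0$, the sign flips as well. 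So each fibre contributes a nonnegative amount, bounded by
\[
4\pi\Big|\int_1^{1+u}|t-1|\,t^2\dd t\Big|\leq 4\pi\,\frac{u^2}{2}\max(1,(1+u)^2).
\]
Using $|u|\leq 1/2$, we get $(1+u)^2\leq 9/4$. Integrating over $\pt B_1$ and multiplying by $2$ gives
\[
\nl(B_1)-\nl(E)\leq 2\cdot 4\pi\cdot\frac{9}{8}\|u\|_{L^2}^2=9\pi\|u\|_{L^2(\pt B_1)}^2 .
\]

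The main point to check is the bookkeeping in the radial integral, namely that $t^2\leq(1+u)^2\leq 9/4$ on the fibre and that the $|t-1|$ integral produces the factor $u^2/2$. These exactly produce the constant $9\pi$. Justifying the positive-definiteness step for $f\in L^1\cap L^\infty$ with compact support is immediate from Lemma~\ref{lem:kernel_prop}(iii).
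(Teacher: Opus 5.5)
Your proof is correct and follows essentially the same route as the paper: positive definiteness of $Y_\alpha$ to discard the quadratic term in $\chi_E-\chi_{B_1}$, subtraction of the boundary value of $v_\alpha$ using $\int(\chi_E-\chi_{B_1})=0$, and the bound $\Lip(v_\alpha)\leq 4\pi$ combined with $(1+u)^2\leq 9/4$ in polar coordinates. Your fibrewise sign observation is a slightly finer version of the paper's triangle-inequality step, but it is not needed and gives the same constant $9\pi$.
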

	
\begin{proof}
Note that the function $\chi_E-\chi_{B_1} \in L^1(\R^3)\cap L^\infty(\R^3)$ and has compact support. Hence, by Lemma~\ref{lem:kernel_prop}(iii), 
	\[
		\int_{\R^3}\int_{\R^3} Y_\alpha(x-y)\big(\chi_E-\chi_{B_1}\big)(x)\big(\chi_E-\chi_{B_1}\big)(y)\dd x \dd y \geq 0.
	\]
Expanding the energy of $E$, this implies that
	\[
		\nl(B_1)-\nl(E) \leq -2\int_{\R^3}\big(\chi_E-\chi_{B_1}\big)(x)v_\alpha(x)\dd x.
	\]
Now, let $\bar{v}_\alpha\defeq v_\alpha(x_0)$ for any $x_0\in\pt B_1$, which is a constant due to Lemma~\ref{lem:potential}. Since $\int_{\R^3}(\chi_E-\chi_{B_1})(x)\dd x =0$ and $|\chi_E-\chi_{B_1}|\leq 1$ with $\chi_E-\chi_{B_1}$ supported on $\asymm{E}{B_1}$, by triangle inequality,
	\begin{multline*}
			-2\int_{\R^3} \big(\chi_E-\chi_{B_1}\big)(x)v_\alpha(x)\dd x \\= -2\int_{\R^3} \big(\chi_E-\chi_{B_1}\big)(x)(v_\alpha(x)-\bar{v}_\alpha)\dd x \leq 2 \int_{\asymm{E}{B_1}}|v_\alpha(x)-\bar{v}_{\alpha}|\dd x. 
	\end{multline*}
This, again using Lemma~\ref{lem:potential}, implies that
	\[
		\nl(B_1)-\nl(E) \leq 8\pi\int_{\asymm{E}{B_1}}\big||x|-1\big|\dd x \leq 8\pi\cdot \frac{9}{4}\int_{\pt B_1} \frac{u^2(\theta)}{2}\dd \theta = 9\pi \|u\|_{L^2(\pt B_1)}^2,
	\]
where the second inequality uses the fact that $(1+\|u\|_{L^\infty(\partial B_1)})^2\leq 9/4$.
\end{proof}

As a consequence we get the following regularity result, which follows from \cite[Chapter 21]{Mag} combined with arguments in \cite{CFP,GMP}.

	\begin{proposition}\label{prop:regularity}
		There exists $\beta_1>0$, $\gamma\in(0,1/2)$, and an increasing function $\eta\colon(0,\beta_1)\to(0,\infty)$ with $\eta(\beta)\to 0$ as $\beta\to 0$, all independent of $\alpha$, such that if $\alpha\geq 0$, $0<\beta<\beta_1$, and $E$ minimizes $\ene$ among sets of volume $|B_1|$, then, up to a translation, $E$ is a nearly spherical set with barycenter at the origin and $\|u\|_{C^{1,\gamma}(\pt B_1)}\leq \eta(\beta)$.
	\end{proposition}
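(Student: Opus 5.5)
The plan is the standard quasi-minimizer route, keeping every constant independent of $\alpha$ by using only the uniform kernel bounds of Lemma~\ref{lem:kernel_prop}.

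\textbf{Step 1: $E$ is a uniform $\Lambda$-minimizer of the perimeter.} Let $E$ minimize $\ene$ with $|E|=|B_1|$ and let $F$ be any competitor with $F\Delta E\subset\subset B_r(x)$. Using $|Y_\alpha|\le Y_0$ and Lemma~\ref{lem:kernel_prop}(iv) we get
\[
|\nl(F)-\nl(E)|\le 2\sup_x\int_{E\cup F}Y_0(x-y)\dd y\;|F\Delta E|\le C|F\Delta E|,
\]
with $C$ depending only on $|E\cup F|$. To remove the volume constraint we use the usual dilation or volume-fixing variations (as in \cite{CFP} or \cite[Chapter 21]{Mag}). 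The volume-fixing maps change $\nl$ by an amount controlled through $|\nabla Y_\alpha|\le |z|^{-2}$. We then get, for $r\le r_0$,
\[
\Pe(E)\le \Pe(F)+\Lambda|E\Delta F|,
\]
with $r_0$ and $\Lambda$ independent of $\alpha$ and of $\beta\le\beta_1$. Since $E$ may a priori be unbounded, we first need boundedness. For this we will use density estimates from $\Lambda$-minimality, together with a truncation argument showing that a minimizer has essentially bounded diameter. The truncation competitor is $E\cap B_R$ rescaled back to volume $|B_1|$. Its energy comparison uses only the perimeter and the bound $\nl\le C_2|E|^{5/3}$, so it is uniform in $\alpha$.

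\textbf{Step 2: $L^1$ closeness to a ball.} From $\ene(E)\le\ene(B_1)$ we get
\[
\Pe(E)-\Pe(B_1)\le\beta\,\nl(B_1)\le\beta\cdot\frac{32\pi^2}{15},
\]
by Lemma~\ref{lem:kernel_prop}(v). The quantitative isoperimetric inequality then gives, after a translation,
\[
|E\Delta B_1|\le C\sqrt{\beta}.
\]

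\textbf{Step 3: upgrade to $C^{1,\gamma}$ closeness.} We argue by contradiction. Suppose there are sequences $\beta_k\to0$, arbitrary $\alpha_k$, and minimizers $E_k$ with $|E_k\Delta B_1|\to0$, all uniform $\Lambda$-minimizers. The $\epsilon$-regularity and improvement-of-flatness theory for $\Lambda$-minimizers \cite[Chapter 21, Theorem 26.6]{Mag} gives $\pt E_k\to\pt B_1$ in $C^{1,\gamma}$ for any fixed $\gamma\in(0,1/2)$. Hence each $E_k$ is, for large $k$, a nearly spherical graph $u_k$ with $\|u_k\|_{C^{1,\gamma}}\to0$.

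We then recenter so the barycenter sits at the origin; a translation by $o(1)$ preserves the graph structure. Finally we set
\[
\eta(\beta)\defeq\sup\big\{\|u\|_{C^{1,\gamma}} : \text{$u$ arises from a minimizer for some }\alpha\ge0,\ \beta'\le\beta\big\}.
\]
This $\eta$ is increasing by construction, tends to $0$ by the compactness argument, and is finite once $\beta_1$ is small.

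\textbf{Main obstacle.} The hard part is the uniformity of $\Lambda$ and $r_0$ in $\alpha$, and especially the boundedness and volume-fixing step. My first attempt is to replace every appearance of $Y_\alpha$ by the dominating $Y_0$ estimates, namely (iv) of Lemma~\ref{lem:kernel_prop} and Lemma~\ref{lem:potential}. Then all constants coincide with those for the Coulomb case $\alpha=0$ treated in \cite{CFP,GMP}.
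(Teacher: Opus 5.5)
Your proposal is correct and is essentially the argument the paper has in mind: the paper gives no details beyond citing \cite[Chapter 21]{Mag}, \cite{CFP} and \cite{GMP}. Your route is the standard scheme of those references: uniform $\Lambda$-minimality from $Y_\alpha\le Y_0$ and $|\nabla Y_\alpha|\le|z|^{-2}$, then $L^1$-closeness to $B_1$ via the quantitative isoperimetric inequality, then improved convergence of $\Lambda$-minimizers followed by recentering. One small simplification: the dilation option in your Step 1 already gives $\Lambda$ uniform in $\alpha$, $\beta$ and the set. So you can drop the separate boundedness/truncation step, since boundedness of the $E_k$ follows from the uniform density estimates together with $|E_k\Delta B_1|\to 0$.
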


\begin{proof}[Proof of Theorem~\ref{thm:global_min}]
By \cite{Fug}, there exist $\delta>0$ and $C_F>0$ such that if $E$ is a nearly spherical set with barycenter at the origin and $\|u\|_{C^1(\pt B_1)}\leq \delta$, then
	\[
		\Pe(E) - \Pe(B_1) \geq C_F \|u\|_{H^1(\pt B_1)}^2.
	\]
Now, take
	\[
		\beta_0 \defeq \min \Big\{\beta_1,\frac{C_F}{9\pi},\sup\big\{\beta<\beta_1 \colon \eta(\beta)\leq \min(\delta,1/2)\big\}\Big\}
	\]
with $\beta_1$ and $\eta$ from the proposition above. Let $\beta<\beta_0$ and let $E$ be a minimizer of $\ene$. Then $E$ is a nearly spherical set with barycenter at the origin and $\|u\|_{C^1(\pt B_1)}\leq \eta(\beta)\leq \min\{\delta,1/2\}$. Using Lemma~\ref{lem:nonloc_L2_control},
	\[
		\ene(E)-\ene(B_1) = \big(\Pe(E)-\Pe(B_1)\big)-\beta\big(\nl(B_1)-\nl(E)\big)\geq \big(C_F-9\pi\beta\big)\|u\|_{H^1(\pt B_1)}^2.
	\]
Since $\beta<C_F/(9\pi)$ the right-hand side is positive unless $u\equiv 0$, i.e. $E=B_1$.
\end{proof}


\section{Sharp stability of \texorpdfstring{$B_1$}{B1}}\label{sec:stability}
In this section we prove Theorem \ref{thm: Stable Critical Threshold}. As in Section \ref{sec:min_balls}, due to Remark \ref{rem:master} it suffices to prove
	\begin{theorem}\label{thm: Stable Critical Threshold Beta} For any $\alpha\geq 0$ the unit ball $B_1$ is a volume-constrained stable critical point of $\ene$ if and only if $\beta \leq \beta_*(\alpha)$ where
		\begin{equation}\label{eqn:beta_star}
			\beta_*(\alpha) =\frac{\alpha^5 e^{\alpha}}{(2\alpha^2-9)\pi e^{\alpha} + \left((\alpha^2+3\alpha+3)^2+\alpha^2(\alpha+1)^2\right)\pi e^{-\alpha}}.
		\end{equation}
	\end{theorem}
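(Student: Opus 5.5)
The plan is to compute the second variation of $\ene$ at $B_1$ along volume-preserving normal deformations, diagonalize it in spherical harmonics, and then identify which mode fails first as $\beta$ grows. For $\varphi\in C^\infty(\pt B_1)$ with $\int_{\pt B_1}\varphi=0$, consider the flow $\Phi_t$ generated by a vector field with normal component $\varphi$ on $\pt B_1$, corrected so that $|\Phi_t(B_1)|=|B_1|$. Let $v_\alpha$ be the potential of Lemma~\ref{lem:potential}. Then $B_1$ is critical, because the first variation is $\int_{\pt B_1}(H+2\beta v_\alpha)\varphi$ and both $H$ and $v_\alpha$ are constant on $\pt B_1$. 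I expect the second variation to be the quadratic form
\[
Q_\beta(\varphi)=\int_{\pt B_1}\big(|\nabla_\tau\varphi|^2-2\varphi^2\big)\dd\Htwo
+2\beta\int_{\pt B_1}\!\int_{\pt B_1}Y_\alpha(x-y)\varphi(x)\varphi(y)\dd\Htwo(x)\dd\Htwo(y)
+2\beta\,\pt_\nu v_\alpha\int_{\pt B_1}\varphi^2\dd\Htwo,
\]
derived as in the Choksi--Sternberg / Bonacini--Cristoferi computation. That computation only uses that $Y_\alpha$ is radial and locally integrable, so homogeneity plays no role. The normal derivative $\pt_\nu v_\alpha=v_\alpha'(1)<0$ can be read off from the function $f(\rho)$ in the proof of Lemma~\ref{lem: Properties of Yukawa B_1}.

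Since $Y_\alpha$ is radial, the Funk--Hecke formula makes the double surface integral diagonal in spherical harmonics. On degree-$k$ harmonics it acts by multiplication by
\[
\mu_k(\alpha)=2\pi\int_{-1}^1Y_\alpha\big(\sqrt{2-2t}\big)P_k(t)\dd t ,
\]
which has the closed form $\mu_k=4\pi\alpha\, i_k(\alpha)k_k(\alpha)$ up to normalization, with $i_k,k_k$ the modified spherical Bessel functions. For $\alpha=0$ this reduces to $\mu_k=4\pi/(2k+1)$. Expanding $\varphi=\sum_{k\geq1}\varphi_k$ then gives
\[
Q_\beta(\varphi)=\sum_{k\ge1}\big[(k-1)(k+2)+2\beta(\mu_k+v_\alpha'(1))\big]\|\varphi_k\|^2_{L^2}.
\]
Translation invariance forces the $k=1$ coefficient to vanish, which gives the identity $v_\alpha'(1)=-\mu_1$. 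I will check this identity directly from the explicit formulas as a consistency test. Stability is therefore equivalent to
\[
\beta\le \inf_{k\ge2}\frac{(k-1)(k+2)}{2(\mu_1-\mu_k)} .
\]
The expected value is $\beta_*(\alpha)=2/(\mu_1-\mu_2)$. Writing $i_1,i_2,k_1,k_2$ in terms of $e^{\pm\alpha}$ and simplifying should produce exactly \eqref{eqn:beta_star}. The "if and only if" then follows: for $\beta\le\beta_*$ every coefficient is nonnegative, and for $\beta>\beta_*$ a degree-$2$ harmonic gives $Q_\beta<0$.

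The main obstacle is proving that $k=2$ realizes the infimum, uniformly in $\alpha\ge0$. This amounts to showing
\[
\frac{\mu_1-\mu_k}{(k-1)(k+2)}\le\frac{\mu_1-\mu_2}{4}\qquad\text{for all }k\ge2 .
\]
My first approach is to use that $0<\mu_k$ is decreasing in $k$, which should follow from Lemma~\ref{lem:kernel_prop}(iii) together with the Funk--Hecke formula, since $Y_\alpha$ is a positive-definite, completely monotone function of $|x-y|^2$. With that in hand, $\mu_1-\mu_k\le\mu_1$, and it would suffice to prove $\mu_1\le \tfrac{(k-1)(k+2)}{4}(\mu_1-\mu_2)$. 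This holds trivially for large $k$, because $(k-1)(k+2)\ge 4\mu_1/(\mu_1-\mu_2)$ eventually. The finitely many remaining $k$ would be handled by explicit Bessel-function inequalities, mimicking the Riesz case $\alpha=0$ where $\mu_1-\mu_k=4\pi\big(\tfrac13-\tfrac1{2k+1}\big)$. If the crude bound $\mu_1-\mu_k\le\mu_1$ is not uniform enough in $\alpha$, I would instead write $\mu_1-\mu_k=2\pi\int_{-1}^1Y_\alpha(\sqrt{2-2t})(P_1-P_k)(t)\dd t$. I would then compare it against the Riesz case through the subordination $Y_\alpha(r)=\int e^{-sr^2}\dd\nu_\alpha(s)$, which reduces the claim to the same inequality for Gaussian kernels, where the coefficients are explicit.

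Finally, I would read off the regime statements from the closed form. The denominator of $\beta_*$ changes sign exactly at the value of $\alpha$ corresponding to $\sigma_\alpha=1$ after rescaling. When it is nonpositive every $k\ge2$ coefficient stays nonnegative and we get $\beta_*=+\infty$, matching Theorem~\ref{thm: Stable Critical Threshold}.
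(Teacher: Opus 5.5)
Your reduction is correct and matches the paper's. With $v_\alpha'(1)=-\mu_1$ your form is exactly $\mathcal{QP}-\beta\mathcal{QN}_\alpha$; the paper obtains the same identity as $c^2_{Y_\alpha,\partial B_1}=\mu_1^\alpha$. Also, $2/(\mu_1-\mu_2)$ does simplify to \eqref{eqn:beta_star}.

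\textbf{The gap.} It is the step you flag as the main obstacle: showing that $k=2$ realizes the infimum for every $\alpha$. Neither of your routes closes it.

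Your first route fails uniformly in $\alpha$. From $I_\nu(\alpha)K_\nu(\alpha)\sim\frac{1}{2\alpha}\bigl(1-\frac{4\nu^2-1}{8\alpha^2}\bigr)$ you get $\mu_k\sim\frac{2\pi}{\alpha}\bigl(1-\frac{k(k+1)}{2\alpha^2}\bigr)$ for fixed $k$, hence $4\mu_1/(\mu_1-\mu_2)\sim 2\alpha^2$. So the crude bound $\mu_1-\mu_k\le\mu_1$ only disposes of $k\gtrsim\sqrt2\,\alpha$. The ``finitely many remaining $k$'' therefore form a family whose size grows without bound as $\alpha\to\infty$. Checking them by Bessel inequalities is the whole problem, not a finite verification.

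The difficulty is also intrinsic. For each fixed $k$, all the ratios $(k-1)(k+2)/(2(\mu_1-\mu_k))$ have the same leading behaviour $\alpha^3/(2\pi)$. The ordering is decided only at lower order, so no crude estimate can work.

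Your subordination fallback is a legitimate reduction: the target inequality is linear in the kernel, and $e^{-\alpha r}/r$ is a positive mixture of Gaussians in $r^2$. But it leaves you to prove $4(i_1(z)-i_k(z))\le(k-1)(k+2)(i_1(z)-i_2(z))$ for all $z>0$ and $k\ge2$. The two sides agree to leading order as $z\to\infty$, so this is no easier than the original, and calling the coefficients ``explicit'' does not prove it.

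\textbf{The missing idea.} The paper (Appendix~\ref{sec:appendix}) uses a variational characterization of the eigenvalues:
$1/(\alpha\, i_l(\alpha)k_l(\alpha))=G(\lambda_l)$, where $G(\tau)=\inf\{\int_0^\infty((\varphi')^2+\alpha^2\varphi^2)r^2\,\dd r+\tau\int_0^\infty\varphi^2\,\dd r:\varphi(1)=1\}$.
This infimum is attained by the profile equal to $i_l(\alpha r)/i_l(\alpha)$ inside the ball and $k_l(\alpha r)/k_l(\alpha)$ outside. As an infimum of increasing affine functions of $\tau$, $G$ is concave and increasing. Since $\mu_l^\alpha-\mu_1^\alpha=8\pi\bigl(1/G(\lambda_1)-1/G(\lambda_l)\bigr)$, concavity at $\lambda_j=t\lambda_1+(1-t)\lambda_l$ together with $x+x^{-1}>2$ shows that $(\lambda_l-\lambda_1)/(\mu_l^\alpha-\mu_1^\alpha)$ is strictly increasing in $l$. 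This holds for all $\alpha$ at once, with no case analysis.

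\textbf{An error in your last paragraph.} Because $\mu_1>\mu_2$ for every $\alpha>0$, the denominator in \eqref{eqn:beta_star} equals $\alpha^5e^{\alpha}(\mu_1-\mu_2)/2>0$. So $\beta_*(\alpha)$ is always finite and never $+\infty$. The regime $m_*(\alpha)=+\infty$ for $\alpha\ge(2\pi)^{1/3}$ in Theorem~\ref{thm: Stable Critical Threshold} has a different source. It comes from $r/\beta_*(r)^{1/3}$ increasing to $(2\pi)^{1/3}$, combined with the rescaling $\alpha'=\alpha\beta^{1/3}$, not from a sign change of the denominator.
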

	
\begin{proof}[Proof of Theorem~\ref{thm: Stable Critical Threshold}] For $m>0$ and $\alpha \geq 0$, by scaling we have that $B[m]$ is a volume-constrained stable critical point of $\enf$ if and only if $B_1$ is a volume-constrained stable critical point of $\calE_{\alpha', \beta}$ with $\alpha'=\alpha \beta^{1/3}$ and $\beta=m/|B_1|$. Theorem~\ref{thm: Stable Critical Threshold Beta} then gives that $B[m]$ is a volume-constrained stable critical point of $\enf$ if and only if $\beta \leq \beta_*(\alpha')$, i.e.
	\[
		m\leq |B_1|\beta_*\left(\frac{\alpha m^{1/3}}{|B_1|^{1/3}}\right).
	\]
Hence, $m_*(\alpha)$ is given implicitly by
	\begin{equation}\label{eqn:m_star}
		m_*(\alpha) = |B_1|\beta_*\left(\frac{\alpha m_*(\alpha)^{1/3}}{|B_1|^{1/3}}\right).
	\end{equation}
Indeed, an explicit computation shows that $\beta_*(r)/r^3$ is decreasing, and the claim follows by taking $r=\alpha m^{1/3}/|B_1|^{1/3}$ and $r_*=\alpha m_*(\alpha)^{1/3}/|B_1|^{1/3}$. 

Notice that although $\beta_*(\alpha)$ is explicit, we cannot explicitly find $m_*(\alpha)$. However, we can deduce some information about it. We now show that $m_*(\alpha)=+\infty$ for $\alpha \geq (2\pi)^{1/3}$. Rearranging \eqref{eqn:m_star} gives $\alpha = t/\beta_*(t)^{1/3}$ for $t=\alpha (m_*(\alpha)/|B_1|)^{1/3}$. Since $t/\beta_*(t)^{1/3}$ is increasing, it is injective and has a maximum value of $(2\pi)^{1/3}$ at $+\infty$. Hence $m_*(\alpha)$ is uniquely defined for $\alpha<(2\pi)^{1/3}$, and numerically computable via \eqref{eqn:m_star}. Thus, if $\alpha \geq (2\pi)^{1/3}$ then $m_*(\alpha)$ is $+\infty$. Furthermore by implicit differentiation we see that
	\[
	m_*'(\alpha) = \frac{|B_1|\beta_*'(t)\beta_*(t)}{\beta_*(t)^{2/3}-1/3\,t\beta_*'(t)\beta_*(t)^{-1/3}}>0,
	\]
and so $m_*(\alpha)\geq m_*(0)=5$. 
\end{proof}

Let us now turn to proving Theorem~\ref{thm: Stable Critical Threshold Beta}. We first expand the Yukawa potential in spherical harmonics. To this end let $\{Y_l^m\}$ be an orthonormal basis of $\partial B_1$ of spherical harmonics. Recall that if $x_1,x_2 \in \partial B_1$ then
	\begin{equation}\label{eqn: Yukawa Spherical Harmonic Expansion}
		\frac{e^{-\alpha|x_1-x_2|}}{|x_1-x_2|} = 4\pi \alpha \sum_{l=0}^{\infty} i_l(\alpha)k_l(\alpha) \sum_{m=-l}^l Y_l^m(\theta_1,\varphi_1)Y_l^{m^*}(\theta_2,\varphi_2)
	\end{equation}
where $i_l$ and $k_l$ are the modified spherical Bessel functions given by
\[i_l(\alpha) = \sqrt{\frac{\pi}{2\alpha}}\,I_{l+1/2}(\alpha) \quad \text{and} \quad k_l(\alpha) = \sqrt{\frac{2}{\pi \alpha}}\,K_{l+1/2}(\alpha)\]
and $I$ and $K$ are the modified Bessel functions of the first and second kind respectively (cf. \cite[Eq. (39)]{Winkelmann}). Let us define the operator
	\[
		\mathscr{Y}_{\alpha}u(x)\defeq 2\int_{\partial B_1}(u(x)-u(y))Y_{\alpha}(x-y) \dd \mathcal{H}^2(y)
	\]
so that for every $u\in C^1(\partial B_1)$ we have
	\begin{equation}\label{eqn: alpha Norm to Y Operator}
		[u]_{\alpha}^2 := \int_{\partial B_1}\int_{\partial B_1} |u(x)-u(y)|^2Y_{\alpha}(x-y) \ \dd \mathcal{H}^2(y)\dd\mathcal{H}^2(x) = \int_{\partial B_1} u(x)\,\mathscr{Y}_{\alpha}u(x) \dd\mathcal{H}^2(x).
	\end{equation}
We aim to study the eigenvalues of $\mathscr{Y}_{\alpha}$. To this end we introduce the \textit{modified Helmholtz operator on the sphere} as
	\[
		\mathcal{Y}_{\alpha}u(x)\defeq\int_{\partial B_1}u(y)Y_{\alpha}(x-y) \dd\mathcal{H}^2(y)
	\]
and observe that
	\[
		\mathscr{Y}_{\alpha} = 2(\mu_0^*(\alpha)\Id - \mathcal{Y}_{\alpha})\]
where $\mu_0^*(\alpha)$ is the 0-th eigenvalue of $\mathcal{Y}_{\alpha}$ given by
	\[
		\mu_0^*(\alpha) = \int_{\partial B_1}\frac{e^{-\alpha|x-y|}}{|x-y|} \dd\mathcal{H}^2(y).
	\]
It suffices then to determine the eigenvalues $\mu_l^*(\alpha)$ of $\mathcal{Y}_{\alpha}$, that is for any $Y_l^m$ with $m\in \{-l,\ldots,l\}$ we have
	\[
		\mathcal{Y}_{\alpha} Y_l^m = \mu_l^*(\alpha) Y_l^m.
	\]
Using \eqref{eqn: Yukawa Spherical Harmonic Expansion} we find that
	\[
		\mathcal{Y}_{\alpha}Y_l^m = \int_{\partial B_1}\frac{e^{-\alpha|x-y|}}{|x-y|}Y_l^m(y) \dd \mathcal{H}^2(y) = 4\pi \alpha\, i_l(\alpha)k_l(\alpha)Y_l^m
	\]
and hence,
	\[
		\mu_l^*(\alpha) = 4\pi\alpha\, i_l(\alpha)k_l(\alpha).
	\]
Consequently the eigenvalues $\mu_l^{\alpha}$ of $\mathscr{Y}_{\alpha}$ are
	\[
		\mu_l^{\alpha}= 2(\mu_0^*(\alpha) - \mu_l^*(\alpha)).
	\]
	
Let us collect some facts about the eigenvalues $\mu_l^{\alpha}$.
	\begin{lemma}\label{lem: Eigenvalue Y Properties} $\mu_0^{\alpha}=0$, $(\mu_l^{\alpha})_{l\in\\N}$ is increasing in $l$, and 
		\begin{align}
			\mu_1^{\alpha} &=\frac{4\pi e^{-\alpha}}{\alpha^3}\left(e^{\alpha}-(2\alpha^2+2\alpha+1)e^{-\alpha}\right),\label{eqn: mu1}\\
			\mu_2^{\alpha} &=\frac{4\pi e^{-\alpha}}{\alpha^5}\left((3\alpha^2-9)e^{\alpha} + \left((\alpha^2+3\alpha+3)^2-\alpha^4\right)e^{-\alpha}\right).\label{eqn: mu2}
		\end{align}
	In addition, we can write $\mu_1^{\alpha}$ in terms of $\nl(B_1)$ and its derivatives as
		\begin{equation}\label{eqn: mu1 rewrite}
			\mu_1^{\alpha} = \frac{10\nl(B_1)+8\alpha\nl'(B_1)+\alpha^2\nl''(B_1)}{\Pe(B_1)}.
		\end{equation}
	\end{lemma}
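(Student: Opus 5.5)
The plan is to work directly from the closed form $\mu_l^\alpha = 2(\mu_0^*(\alpha)-\mu_l^*(\alpha))$ with $\mu_l^*(\alpha)=4\pi\alpha\, i_l(\alpha)k_l(\alpha)$. The claim $\mu_0^\alpha=0$ is then immediate. For \eqref{eqn: mu1} and \eqref{eqn: mu2} we use the elementary closed forms of the modified spherical Bessel functions:
\[
i_0(\alpha)=\frac{\sinh\alpha}{\alpha},\quad k_0(\alpha)=\frac{e^{-\alpha}}{\alpha},\quad i_1(\alpha)=\frac{\alpha\cosh\alpha-\sinh\alpha}{\alpha^2},\quad k_1(\alpha)=\frac{e^{-\alpha}(1+\alpha)}{\alpha^2},
\]
together with $i_2(\alpha)=\big((\alpha^2+3)\sinh\alpha-3\alpha\cosh\alpha\big)/\alpha^3$ and $k_2(\alpha)=e^{-\alpha}(\alpha^2+3\alpha+3)/\alpha^3$ (with the normalization in which $k_l$ carries the factor $\pi/2$ absorbed consistently with \eqref{eqn: Yukawa Spherical Harmonic Expansion}; this is checked at $l=0$ against the direct computation $\mu_0^*(\alpha)=2\pi(1-e^{-2\alpha})/\alpha$). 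Substituting gives $\mu_0^*$, $\mu_1^*$, $\mu_2^*$ as combinations of $e^{\pm\alpha}$ times polynomials, and subtracting yields \eqref{eqn: mu1}--\eqref{eqn: mu2} after expanding $\sinh,\cosh$ and collecting the $e^{\alpha}e^{-\alpha}=1$ and $e^{-2\alpha}$ terms. As a sanity check, $\mu_l^\alpha$ should tend to the Coulomb values $8\pi\big(1-\tfrac{1}{2l+1}\big)$ as $\alpha\to0$.

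For monotonicity in $l$ it suffices to show that $l\mapsto i_l(\alpha)k_l(\alpha)$ is decreasing for each fixed $\alpha>0$. The route I would try first avoids Bessel inequalities. Write $\mu_l^*(\alpha)=\int_{\partial B_1}Y_\alpha(x_0-y)P_l(x_0\cdot y)\,\dd\mathcal H^2(y)$ via Funk--Hecke, i.e.
\[
\mu_l^*=2\pi\int_{-1}^1 g(t)P_l(t)\,\dd t, \qquad g(t)=\frac{e^{-\alpha\sqrt{2-2t}}}{\sqrt{2-2t}}.
\]
Alternatively, use the product integral representation
\[
I_\nu(\alpha)K_\nu(\alpha)=\int_0^\infty J_0\big(2\alpha\sinh s\big)\,e^{-2\nu s}\dots
\]
or, more reliably, the known fact that $\nu\mapsto I_\nu(x)K_\nu(x)$ is decreasing for $\nu\ge -1/2$. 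That fact follows from the representation $I_\nu(x)K_\nu(x)=\int_0^\infty J_0(2x\sinh t)e^{-2\nu t}\dots$ or from the Nicholson-type formula
\[
I_\nu K_\nu(x)=\int_0^\infty K_0(2x\cosh t)\cosh(2\nu t)\,\dd t
\]
combined with a Turán-type argument. Since $i_lk_l=\tfrac{1}{\alpha}I_{l+1/2}K_{l+1/2}$, this gives that $\mu_l^*$ is decreasing, hence $\mu_l^\alpha$ is increasing. A self-contained alternative is to note that $g$ is absolutely monotone on $(-1,1)$ in the sense of Schoenberg. It is a composition $h(2-2t)$ with $h(s)=e^{-\alpha\sqrt s}/\sqrt s$ completely monotone in $s$, so all derivatives $g^{(k)}\ge0$. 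Rodrigues' formula and repeated integration by parts then give $\int g P_l=c_l\int g^{(l)}(1-t^2)^l\,\dd t\ge0$. Monotonicity needs slightly more, namely comparing $l$ and $l+1$, which the Bessel route handles directly. This is the step I expect to be the main obstacle, and I would settle it by citing the monotonicity of $I_\nu K_\nu$ in $\nu$.

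Finally, \eqref{eqn: mu1 rewrite} is a pure identity between explicit functions. Using Lemma \ref{lem: Properties of Yukawa B_1}(i), we differentiate $\nl(B_1)=\frac{8\pi^2}{3\alpha^5}\big(2\alpha^3-3\alpha^2+3-3(\alpha+1)^2e^{-2\alpha}\big)$ twice. A cleaner organization is to set $F(\alpha)=\alpha^5\nl(B_1)$ and note that $10\nl+8\alpha\nl'+\alpha^2\nl''$ equals $\alpha^{-3}\big(\alpha^2F''-2\alpha F'+0\cdot F\big)$ up to adjustment. Concretely, one computes $\alpha^{-5}\big(\alpha^2F''-2\alpha F'\big)$ and checks the coefficients. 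The polynomial parts cancel except the constant, which yields the $e^{\alpha}\cdot e^{-\alpha}$ term, and the $e^{-2\alpha}$ part produces $-(2\alpha^2+2\alpha+1)$. After dividing by $\Pe(B_1)=4\pi$, this matches \eqref{eqn: mu1}. This step is routine bookkeeping.
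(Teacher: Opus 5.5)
Your derivation of \eqref{eqn: mu1}--\eqref{eqn: mu2} and your handling of monotonicity follow the paper. With the paper's normalization (no factor $\pi/2$ in $k_l$), your $i_l,k_l$ give exactly the closed forms of $\mu_0^*,\mu_1^*,\mu_2^*$ that the paper quotes from Arfken. The paper also gets monotonicity by citing a Bessel-function fact, namely Segura's bound $I_{\zeta+1/2}(\alpha)/I_{\zeta-1/2}(\alpha)<\alpha/(\zeta+\sqrt{\zeta^2+\alpha^2})<K_{\zeta-1/2}(\alpha)/K_{\zeta+1/2}(\alpha)$, which gives $I_{\zeta+1/2}K_{\zeta+1/2}<I_{\zeta-1/2}K_{\zeta-1/2}$ directly.

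The justifications you sketch for that fact, however, do not work, and should be replaced by such a precise citation. Your ``Nicholson-type formula'' is false: $\int_0^\infty K_0(2x\cosh t)\cosh(2\nu t)\,\dd t=\tfrac12K_\nu(x)^2$, not $I_\nu(x)K_\nu(x)$. At $\nu=1/2$ it equals $\frac{\pi}{4x}e^{-2x}$, whereas $I_{1/2}(x)K_{1/2}(x)=(1-e^{-2x})/(2x)$. Moreover, its integrand increases in $\nu$, so it would give the opposite monotonicity. The representation $I_\nu(x)K_\nu(x)=\int_0^\infty J_0(2x\sinh t)e^{-2\nu t}\,\dd t$ is correct, but $J_0$ changes sign, so monotonicity in $\nu$ cannot be read off from it.

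For \eqref{eqn: mu1 rewrite} you take a different but valid route, and your check goes through. With $F=\alpha^5\nl(B_1)$ one has $10\nl(B_1)+8\alpha\nl'(B_1)+\alpha^2\nl''(B_1)=\alpha^{-5}(\alpha^2F''-2\alpha F')$; note the prefactor is $\alpha^{-5}$, not $\alpha^{-3}$ as in your first formulation. Then $\alpha^2F''-2\alpha F'=16\pi^2\alpha^2\bigl(1-(2\alpha^2+2\alpha+1)e^{-2\alpha}\bigr)$, which divided by $\alpha^5\Pe(B_1)=4\pi\alpha^5$ is \eqref{eqn: mu1}.

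The paper mentions this option but argues structurally instead. It computes $\frac{\dd}{\dd r}\nl(B_r)|_{r=1}$ once from the near-homogeneity $\nl(B_r)=r^5\calN_{\alpha r}(B_1)$, and once via the identity for $\div(xY_\alpha(x))$ and the divergence theorem. It then recognizes the double boundary integral as $\frac12[x]_\alpha^2=\frac12\mu_1^\alpha\Pe(B_1)$ and eliminates the leftover term by differentiating in $\alpha$.

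Your computation is shorter and purely algebraic given Lemma~\ref{lem: Properties of Yukawa B_1}(i). The paper's argument does not need the closed form of $\nl(B_1)$ and explains where the identity comes from. It also produces \eqref{eqn: mu1 Computation} as a by-product, which is reused right afterwards to show $c^2_{Y_\alpha,\partial B_1}=\mu_1^\alpha$ in the second-variation formula. With your route, that identity would need its own (easy) argument.
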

\begin{proof} By definition
	\[
		\mu_l^*(\alpha)=4\pi\alpha i_l(\alpha)k_l(\alpha) = 4\pi I_{l+1/2}(\alpha)K_{l+1/2}(\alpha).
	\]
Next, \cite[Theorem 1]{Segura} gives us that for any $\zeta\geq 0$ and $\alpha>0$,
	\[
		\frac{I_{\zeta+1/2}(\alpha)}{I_{\zeta-1/2}(\alpha)} < \frac{\alpha}{\zeta+\sqrt{\zeta^2+\alpha^2}} < \frac{K_{\zeta-1/2}(\alpha)}{K_{\zeta+1/2}(\alpha)},
	\]
that is, $I_{\zeta+1/2}(\alpha)K_{\zeta+1/2}(\alpha)<I_{\zeta-1/2}(\alpha)K_{\zeta-1/2}(\alpha)$. Taking $\zeta=l+1$ yields that $(\mu_l^*)_{l\in\N}$ is strictly decreasing; hence, $(\mu_l^{\alpha})_{l\in\\N}$ is strictly increasing in $l$. Note that, by \cite[Eq. (14.196)]{Arfken} we have 
	\begin{align*}
		\mu_0^*(\alpha)&=\frac{2\pi e^{-\alpha}}{\alpha}\left(e^{\alpha}-e^{-\alpha}\right),\\
		\mu_1^*(\alpha)&=\frac{2\pi e^{-\alpha}}{\alpha^3}\left((\alpha^2-1)e^{\alpha}+(\alpha+1)^2e^{-\alpha}\right),\\
		\mu_2^*(\alpha)&=\frac{2\pi e^{-\alpha}}{\alpha^5}\left((\alpha^4-3\alpha^2+9)e^{\alpha}-(\alpha^2+3\alpha+3)^2 e^{-\alpha}\right),
	\end{align*}
and the formulas \eqref{eqn: mu1} and \eqref{eqn: mu2} follow. As for \eqref{eqn: mu1 rewrite} we can use Lemma~\ref{lem: Properties of Yukawa B_1}, but there is an alternative proof without explicitly knowing $\nl(B_1)$ and its derivatives. As in \cite[Proposition 8.4]{FFMMM} we can appeal to the (almost) homogeneity of $\nl(B_r)$. On the one hand
	\begin{align*}
		\frac{\dd}{\dd r}\bigg|_{r=1} \nl(B_r) &= \frac{\dd}{\dd r}\bigg|_{r=1} \int_{B_1}\int_{B_1} r^5Y_{\alpha r}(|x-y|) \dd y \dd x \\
&= 5\nl(B_1)-\alpha \int_{B_1}\int_{B_1} e^{-\alpha|x-y|} \dd x \dd y = 5\nl(B_1)+\alpha\nl'(B_1).
	\end{align*}
On the other hand, notice
	\begin{multline*}
		\div(xY_{\alpha}(x)) = \langle \nabla Y_{\alpha}(x), x\rangle + Y_{\alpha}(x)\div(x) \\
		= \left\langle \frac{\nabla e^{-\alpha |x|}}{|x|} - \frac{e^{-\alpha |x|}}{|x|^3}x , x\right\rangle + 3Y_{\alpha}(x) = \left\langle \nabla  e^{-\alpha |x|}, \frac{x}{|x|}\right\rangle +2 Y_{\alpha}(x),
	\end{multline*}
so that by the divergence theorem,
	\begin{align*}
		\frac{\dd}{\dd r}\bigg|_{r=1} \nl(B_r) &= 2\int_{\partial B_1}\int_{B_1} \frac{e^{-\alpha|x-y|}}{|x-y|} \dd x \dd\mathcal{H}^2(y) \\
&=\int_{\partial B_1}\int_{B_1} \div_x\left((x-y)Y_{\alpha}(x-y)\right)-\left\langle \nabla_x e^{-\alpha|x-y|}, \frac{x-y}{|x-y|}\right\rangle \dd x \dd\mathcal{H}^2(y)\\
&=\int_{\partial B_1}\int_{\partial B_1}\langle x-y, y\rangle Y_{\alpha}(x-y) \dd\mathcal{H}^2(x)d\mathcal{H}^2(y) \\
&\qquad\qquad\qquad\qquad\qquad\qquad +\alpha \int_{B_1}\int_{\partial B_1} e^{-\alpha|x-y|} \dd\mathcal{H}^2(y)\dd x\\
&=\frac{1}{2}\int_{\partial B_1}\int_{\partial B_1} |x-y|e^{-\alpha|x-y|} \dd x\dd y + \alpha \int_{B_1}\int_{\partial B_1}e^{-\alpha|x-y|} \dd\mathcal{H}^2(y)\dd x.
\end{align*}
We recognize the first term as $1/2\mu_1^{\alpha}\Pe(B_1)$. Indeed, observe that the coordinate functions $x_1,x_2,x_3$ are harmonic on $\partial B_1$ and so for $i=1,2,3$ we have $\mathscr{Y}_{\alpha}x_i = \mu_1^{\alpha}x_i$. Then, applying \eqref{eqn: alpha Norm to Y Operator} to each of the $x_i$ and summing the contributions gives for $x\in \partial B_1$ that
\begin{multline}\label{eqn: mu1 Computation}
	\mu_1^{\alpha}\Pe(B_1)= \sum_{i=1}^3 \int_{\partial B_1}x_i \mathscr{Y}_{\alpha}x_i \dd x= \sum_{i=1}^3 [x_i]_{\alpha}^2 =[x]_{\alpha}^2\\
	=\int_{\partial B_1}\int_{\partial B_1} |x-y|e^{-\alpha|x-y|} \dd\mathcal{H}^2(y)\dd\mathcal{H}^2(x).
\end{multline}
Hence, 
	\begin{align*}
		\mu_1^{\alpha}= \frac{10\nl(B_1)+2\alpha\nl'(B_1)}{\Pe(B_1)}-\frac{2\alpha}{\Pe(B_1)} \int_{B_1}\int_{\partial B_1}e^{-\alpha|x-y|} \dd\mathcal{H}^2(y)\dd x.
	\end{align*}
For the last term, observe that differentiating
	\[
		2\int_{\partial B_1}\int_{B_1} \frac{e^{-\alpha|x-y|}}{|x-y|} \dd x \dd \mathcal{H}^2(y) = \frac{\dd}{\dd r}\bigg|_{r=1} \nl(B_r) = 5\nl(B_1)+\alpha \nl' (B_1)
	\] 
in $\alpha$ yields
	\[
		-2\int_{\partial B_1}\int_{B_1} e^{-\alpha|x-y|} \dd x\dd \mathcal{H}^2(y) = 6\nl'(B_1)+\alpha \nl''(B_1),
	\]
and \eqref{eqn: mu1 rewrite} follows.
\end{proof}

Now, we recall the second variation formulas of $B_1$ for the perimeter and the Yukawa potential obtained in \cite{FFMMM}.
	\begin{theorem}[{cf. \cite[Theorem 6.1]{FFMMM}}]\label{thm:second_var}
		Let $X\in C_c^{\infty}(\mathbb{R}^3;\mathbb{R}^3)$ be a volume preserving flow on $B_1$. Denoting $\zeta = \langle X,\nu_{B_1}\rangle$ then
		\begin{align*}
			\delta^2 \Pe(B_1)[X] &= \int_{\partial B_1} |\nabla_{\tau}\zeta(z)|^2 \dd\mathcal{H}^2(z) - 2\int_{\partial B_1} \zeta(z)^2 \dd\mathcal{H}^2(z),\\
			\delta^2 \nl(B_1)[X] &= -[\zeta]_{\alpha}^2+ \int_{\partial B_1} c_{Y_{\alpha},\partial B_1}^2(z) \zeta(z)^2 \dd\mathcal{H}^2(z),
		\end{align*}
where
		\[
			c_{Y_{\alpha},\partial B_1}^2(x):=\int_{\partial B_1}Y_{\alpha}(x-y)|\nu_{B_1}(x)-\nu_{B_1}(y)|^2 \dd\mathcal{H}^2(y).
		\]
	\end{theorem}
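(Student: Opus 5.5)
The plan is to redo the argument of \cite[Theorem 6.1]{FFMMM} for the kernel $Y_\alpha$ rather than quote it blindly. Their proof uses only three properties of the kernel: it is radial, it is locally integrable with the singular behaviour $K(z)\lesssim |z|^{-1}$ in $\R^3$, and $|\nabla K(z)|\lesssim |z|^{-2}$. All three hold for $Y_\alpha$ uniformly in $\alpha\geq 0$ by Lemma~\ref{lem:kernel_prop}(i)--(ii). The exponential decay only helps at infinity, and in any case everything lives on compact sets. The perimeter formula is the classical second variation of area at the round sphere: mean curvature $H=2$, $|A|^2=2$, and the volume constraint kills the tangential and acceleration terms. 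So I focus on $\nl$.

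\textbf{Step 1: set up the flow.} Let $\Phi_t$ be the flow of $X$ and $E_t=\Phi_t(B_1)$. Change variables to write
\[
\nl(E_t)=\int_{B_1}\int_{B_1}Y_\alpha(\Phi_t x-\Phi_t y)\,J_t(x)J_t(y)\dd x\dd y .
\]
Differentiate twice at $t=0$, justified by dominated convergence. Here $\Phi_t$ is bi-Lipschitz uniformly in $t$, and the bounds $Y_\alpha\le |z|^{-1}$, $|\nabla Y_\alpha|\le|z|^{-2}$ give integrable majorants on $B_1\times B_1$ in $\R^3$ after one integration by parts. This is exactly the Riesz case $s=1$ of \cite{FFMMM}.

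\textbf{Step 2: differentiate the first variation.} The first variation is $\delta\nl(E)[X]=2\int_{\pt E}v_E\,\langle X,\nu_E\rangle\dd\Htwo$, where $v_E=\int_E Y_\alpha(\cdot-y)\dd y$. Differentiate this once more along the flow and evaluate at $B_1$. This produces three groups of terms:
\begin{itemize}
\item a double boundary term $2\int_{\pt B_1}\int_{\pt B_1}Y_\alpha(x-y)\zeta(x)\zeta(y)$, coming from varying the potential;
\item a normal-derivative term $2\int_{\pt B_1}\pt_\nu v_\alpha\,\zeta^2$;
\item terms of the form $2\int_{\pt B_1} v_\alpha\,(\cdots)$, involving $H\zeta^2$, $\dive_\tau$ of tangential parts, and $\langle Z,\nu\rangle$ with $Z=DX\,X$.
\end{itemize}
By Lemma~\ref{lem:potential}, $v_\alpha\equiv\bar v_\alpha$ is constant on $\pt B_1$. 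The last group therefore equals $2\bar v_\alpha$ times the second variation of volume, which vanishes because the flow is volume preserving.

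\textbf{Step 3: rewrite the normal derivative.} By the divergence theorem, $\nabla v_\alpha(x)=-\int_{\pt B_1}Y_\alpha(x-y)\nu(y)\dd\Htwo(y)$. Hence
\[
\pt_\nu v_\alpha(x)=-\int_{\pt B_1} Y_\alpha(x-y)\,\nu(x)\cdot\nu(y)\dd\Htwo(y).
\]
Using $\nu(x)\cdot\nu(y)=1-\tfrac12|\nu(x)-\nu(y)|^2$, this becomes
\[
\pt_\nu v_\alpha=-\mu_0^*(\alpha)+\tfrac12 c^2_{Y_\alpha,\pt B_1}.
\]
Combining with Step~2 gives
\[
\delta^2\nl(B_1)[X]=2\iint Y_\alpha\,\zeta(x)\zeta(y)-2\mu_0^*\int\zeta^2+\int c^2_{Y_\alpha,\pt B_1}\zeta^2 .
\]
Since $\int_{\pt B_1}Y_\alpha(x-y)\dd\Htwo(y)=\mu_0^*$ for every $x\in\pt B_1$, expanding $[\zeta]_\alpha^2$ gives
\[
[\zeta]_\alpha^2=2\mu_0^*\int\zeta^2-2\iint Y_\alpha\,\zeta(x)\zeta(y).
\]
Substituting this yields the stated formula.

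\textbf{Main obstacle.} The main difficulty is the rigorous justification in Step~2. One must check that the boundary differentiation of the singular potential is legitimate. In particular, $v_E$ is $C^{1}$ up to $\pt E$ because $\nabla Y_\alpha$ is integrable (Lemma~\ref{lem:kernel_prop}(iv)), and the tangential Jacobian expansions must be handled carefully. I would follow the corresponding lemmas of \cite{FFMMM} line by line, replacing $|z|^{-1}$ by $Y_\alpha$ and using only the pointwise bounds above. The lack of homogeneity of $Y_\alpha$ plays no role there.
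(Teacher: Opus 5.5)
Your computation is correct, but it is not what the paper does. The paper gives no derivation of its own for this statement: it imports the formula from \cite[Theorem 6.1]{FFMMM}, implicitly trusting that it applies to the Yukawa kernel.

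You instead give a self-contained computation specialized to the ball. Writing the first variation as $2\int_{E_t}\dive(v_{E_t}X)$ and differentiating gives $2\iint Y_\alpha\,\zeta(x)\zeta(y)+2\int_{\pt B_1}(\nabla v_\alpha\cdot X)\,\zeta+2\bar v_\alpha\,\frac{\dd^2}{\dd t^2}|E_t|$. Radial symmetry of $v_\alpha$ reduces the middle term to $2\int\pt_\nu v_\alpha\,\zeta^2$, and the volume constraint kills the last term. Your divergence-theorem formula for $\nabla v_\alpha$, combined with $\nu(x)\cdot\nu(y)=1-\frac12|\nu(x)-\nu(y)|^2$ and the expansion of $[\zeta]_\alpha^2$, then yields exactly $-[\zeta]_\alpha^2+\int c^2_{Y_\alpha,\pt B_1}\zeta^2$. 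The signs are right; for instance, the expression vanishes on translations.

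Your route has a concrete advantage: it shows that $c^2_{Y_\alpha,\pt B_1}=2(\mu_0^*(\alpha)+\pt_\nu v_\alpha)$. Since the coordinate functions are degree-one harmonics, $\int_{\pt B_1}Y_\alpha(x-y)\,y\dd\Htwo(y)=\mu_1^*(\alpha)\,x$. Hence $\pt_\nu v_\alpha=-\mu_1^*(\alpha)$ and $c^2_{Y_\alpha,\pt B_1}=\mu_1^\alpha$ for free, whereas the paper derives this separately via \eqref{eqn: mu1 Computation}. The citation, in exchange, gives generality (arbitrary $C^2$ sets and flows) and spares one from rejustifying the differentiation of singular potentials along the flow.

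One small remark on your Step 1: the integrable majorant needs no integration by parts. Indeed $|D^2Y_\alpha(z)|\le 2|z|^{-3}$ uniformly in $\alpha$ (elementary, though not listed in Lemma~\ref{lem:kernel_prop}), and $|X(\Phi_t x)-X(\Phi_t y)|\le C|x-y|$. So the second $t$-derivative of the integrand is directly $O(|x-y|^{-1})$, which is integrable on $B_1\times B_1$.
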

	
Let us determine $c_{Y_{\alpha},\partial B_1}^2$ now. Observe first that it is a constant in $x$. Then since $\nu_{B_1}(x)=x$, by integrating over $\partial B_1$ and applying \eqref{eqn: mu1 Computation} we have
	\[
		c_{Y_{\alpha},\partial B_1}^2\Pe(B_1) = \int_{\partial B_1}\int_{\partial B_1}|x-y|e^{-\alpha|x-y|} \dd\mathcal{H}^2(y) \dd\mathcal{H}^2(x) = \mu_1^{\alpha}\Pe(B_1).
	\]
That is, $c_{Y_{\alpha}, \partial B_1}^2 = \mu_1^{\alpha}$. Owing to this, we define the quadratic functionals
	\begin{gather*}
		\mathcal{QP}(u)\defeq \int_{\partial B_1}|\nabla_{\tau}u(z)|^2 \dd\mathcal{H}^2(z) - \lambda_1\int_{\partial B_1} u(z)^2 \dd\mathcal{H}^2(z),  \\
		\mathcal{QN_{\alpha}}(u) \defeq [u]_{\alpha}^2- \mu_1^{\alpha} \int_{\partial B_1} u(z)^2 \dd\mathcal{H}^2(z),
	\end{gather*}
with $\lambda_l=l(l+1)$. These functionals are defined on the space
	\[
		\tilde{H}^1(\partial B_1)\defeq \left\lbrace u\in H^1(\partial B_1) \colon \int_{\partial B_1} u(x) \dd \mathcal{H}^2(x)=0\right\rbrace
	\]
which generalizes the space of normal components of volume preserving flows $X$ on $B_1$. With this we have the following (analogous to \cite[Proposition 7.2]{FFMMM})
	\begin{proposition}\label{prop:second_var}
		Let $\alpha\geq 0$ and $\beta> 0$. Then for every $u\in \tilde{H}^1(\partial B_1)$ it holds
		\begin{equation}\label{eqn:expansion}
			\mathcal{QP}(u)-\beta \mathcal{Q\nl}(u) = \sum_{l=2}^{\infty}\sum_{m=-l}^{l}\Big((\lambda_l - \lambda_1)-\beta(\mu_l^{\alpha} - \mu_1^\alpha)\Big) |a_l^m(u)|^2,
		\end{equation}
where $a_l^m(u) = \int_{\partial B_1}u Y_l^{m*} \dd\mathcal{H}^2$. 
	\end{proposition}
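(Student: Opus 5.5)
Expand $u\in\tilde H^1(\partial B_1)$ in the orthonormal basis of spherical harmonics, $u=\sum_{l\ge 0}\sum_{m=-l}^l a_l^m(u)Y_l^m$, where $a_0^0(u)=0$ because $u$ has zero mean. Then diagonalize each of the two quadratic forms.

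For $\mathcal{QP}$ this is classical. Since $-\Delta_{\partial B_1}Y_l^m=\lambda_l Y_l^m$, Parseval gives
\[
\int_{\partial B_1}|\nabla_\tau u|^2\dd\Htwo=\sum_{l,m}\lambda_l|a_l^m(u)|^2,\qquad \int_{\partial B_1}u^2\dd\Htwo=\sum_{l,m}|a_l^m(u)|^2 .
\]
Hence $\mathcal{QP}(u)=\sum_{l\ge1}\sum_m(\lambda_l-\lambda_1)|a_l^m(u)|^2$. The $l=1$ terms vanish, and $l=0$ is absent.

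For $\mathcal{QN_\alpha}$ we use \eqref{eqn: alpha Norm to Y Operator}, which gives $[u]_\alpha^2=\int_{\partial B_1}u\,\mathscr{Y}_\alpha u$. The operator $\mathscr{Y}_\alpha=2(\mu_0^*(\alpha)\Id-\mathcal{Y}_\alpha)$ has eigenfunctions $Y_l^m$ with eigenvalues $\mu_l^\alpha$, and it is self-adjoint because the kernel is symmetric and real. So $[u]_\alpha^2=\sum_{l,m}\mu_l^\alpha|a_l^m(u)|^2$, and therefore $\mathcal{QN_\alpha}(u)=\sum_{l\ge1}\sum_m(\mu_l^\alpha-\mu_1^\alpha)|a_l^m(u)|^2$. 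Subtracting $\beta$ times this from $\mathcal{QP}(u)$, the $l=1$ coefficients cancel and we obtain \eqref{eqn:expansion}.

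The main obstacle is justifying these expansions for general $u\in H^1$, not just finite sums. I would argue in three steps.
\begin{enumerate}
\item Prove the identity first for finite linear combinations of spherical harmonics, where everything is algebraic.
\item Pass to general $u$ by density in $H^1(\partial B_1)$. For this I need $[\cdot]_\alpha^2$ to be continuous in the $H^1$ norm.
\item Get that continuity from the bound $Y_\alpha\le Y_0=1/|x-y|$. It gives $[u]_\alpha^2\le[u]_0^2\lesssim\|u\|_{H^{1/2}}^2\le\|u\|_{H^1}^2$, and equivalently $0\le\mu_l^\alpha\le\mu_l^0\lesssim l$. The latter follows from the monotonicity in Lemma~\ref{lem: Eigenvalue Y Properties}, or from the known $\alpha=0$ eigenvalues $\mu_l^0=8\pi l/(2l+1)\cdot\ldots$; either way they are of order at most $\lambda_l$.
\end{enumerate}
The weighted series $\sum\mu_l^\alpha|a_l^m|^2$ then converges, and both sides of \eqref{eqn:expansion} are continuous on $H^1$. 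The identity extends from finite sums to all of $\tilde H^1(\partial B_1)$ by dominated (monotone) convergence of the series.
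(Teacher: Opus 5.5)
Your proposal is correct and follows the same route as the paper, which states the result as analogous to \cite[Proposition 7.2]{FFMMM}: expand $u$ in spherical harmonics with $a_0^0(u)=0$, and diagonalize both quadratic forms using $-\Delta_{\partial B_1}Y_l^m=\lambda_lY_l^m$ and $\mathscr{Y}_\alpha Y_l^m=\mu_l^\alpha Y_l^m$. Your density step is fine and in fact easier than you make it: $Y_\alpha$ is integrable on $\partial B_1$, so $0\le\mu_l^\alpha\le 2\mu_0^*(\alpha)\le 8\pi$ and $[\cdot]_\alpha^2$ is bounded on $L^2$ (also note that $\mu_l^0=16\pi l/(2l+1)$, and the monotonicity in Lemma~\ref{lem: Eigenvalue Y Properties} is in $l$, not in $\alpha$, so the bound $\mu_l^\alpha\le\mu_l^0$ comes directly from $Y_\alpha\le Y_0$).
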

	
In particular, $\mathcal{QP}(u)-\beta \mathcal{Q\nl}(u)\geq 0$ for all $u\in \tilde{H}^1(\partial B_1)$ if and only if  $\beta \in (0,\beta_*(\alpha)]$, where
	\begin{equation}\label{eqn: beta* Definition}
		\beta_*(\alpha)\defeq \inf_{l\geq 2}\, \frac{\lambda_l-\lambda_1}{\mu_l^{\alpha}-\mu_1^{\alpha}}.
	\end{equation}
By Proposition~\ref{prop:monotonicity_beta*}, the sequence of eigenvalue ratios is strictly increasing, and the infimum above is achieved when $l=2$. Moreover, since $\lambda_l = l(l+1)$, by \eqref{eqn: mu1} and \eqref{eqn: mu2} we have 
		\begin{equation}\label{eqn: beta* Value}
			\beta_*(\alpha) = \frac{\lambda_2-\lambda_1}{\mu_2^{\alpha}-\mu_1^{\alpha}}  = \frac{\alpha^5 e^{\alpha}}{\pi\big[(2\alpha^2-9)e^{\alpha} + \left((\alpha^2+3\alpha+3)^2+\alpha^2(\alpha+1)^2\right)e^{-\alpha}\big]}.
		\end{equation}

Now we are ready to prove the result that determines $\beta_*(\alpha)$ as the critical stability threshold.

\begin{proof}[Proof of Theorem~\ref{thm: Stable Critical Threshold Beta}]
If $X\in C^\infty_c(\R^3;\R^3)$ induces a volume-preserving flow, then by Theorem~\ref{thm:second_var} the second variation of $\ene$ at $B_1$ with respect to $X$ is
	\[
		\delta^2\ene(B_1)[X] = \mathcal{QP}(X\cdot\nu_{B_1})-\beta \mathcal{Q\nl}(X\cdot\nu_{B_1}).
	\]
This means that $B_1$ is a volume-constrained stable set for $\ene$ if and only if
	\begin{equation} \label{eqn:stability}
		\mathcal{QP}(u)-\beta \mathcal{Q\nl}(u)\geq 0 \qquad \text{for every }u\in C^\infty(\partial B_1) \text{ with }\int_{\partial B_1}u\dd\Htwo=0.
	\end{equation} 
Note that the condition \eqref{eqn:stability} implies that $\delta^2\ene(B_1)[X]\geq0$ for every vector field $X$ inducing a volume-preserving flow, since for every such vector field it must be $\int_{\partial B_1} X\cdot\nu_{B_1}\dd\Htwo=0$. The converse implication can be proved by arguing as in \cite[Proof of Theorem~7.1]{FFMMM}. So, the theorem follows by Proposition~\ref{prop:second_var}.
\end{proof}

\section{Rigidity of \texorpdfstring{$L^1$}{L1}-local minimizers}

In this section we prove Theorem~\ref{thm: L1 Local Minimality}. As in Section \ref{sec:min_balls}, by Remark \ref{rem:master} it suffices to prove
	\begin{theorem}\label{thm: L1 Local Minimality Beta} 
		Let $\alpha >0$. If $\beta < \beta_*(\alpha)$ then $B_1$ is a local volume-constrained minimizer of $\ene$, that is there exists $\delta(\alpha,\beta)>0$ such that if $|E\Delta B_1|\leq \delta$ and $|E|=|B_1|$ then
		\[
			\ene(B_1)\leq \ene(E)
		\]
and moreover if equality holds then $|E\Delta B_1|=0$ up to translation. If instead $\beta>\beta_*(\alpha)$, then $B_1$ is not a local volume-constrained minimizer of $\ene$.
	\end{theorem}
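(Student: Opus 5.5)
The plan follows the two-step scheme of \cite{AFM,FFMMM,BC}. First we prove $W^{2,p}$/$C^{1,\gamma}$-local minimality via a quantitative Fuglede-type estimate. Then we upgrade it to $L^1$-local minimality through a penalized problem and regularity of quasi-minimizers. The instability statement for $\beta>\beta_*(\alpha)$ is comparatively direct.

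\emph{Step 1: coercivity of the second variation.} Let $\beta<\beta_*(\alpha)$. By Proposition~\ref{prop:second_var} and the fact that $\beta_*(\alpha)$ in \eqref{eqn: beta* Definition} is an infimum over $l\ge2$, each coefficient satisfies
\[
(\lambda_l-\lambda_1)-\beta(\mu_l^\alpha-\mu_1^\alpha)\ge \Bigl(1-\tfrac{\beta}{\beta_*(\alpha)}\Bigr)(\lambda_l-\lambda_1).
\]
Hence, for $u\in\tilde H^1(\partial B_1)$ orthogonal to the first spherical harmonics (translations),
\[
\mathcal{QP}(u)-\beta\mathcal{QN}_\alpha(u)\ge c_0\|u\|_{H^1(\partial B_1)}^2,\qquad c_0=c_0(\alpha,\beta)>0.
\]

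\emph{Step 2: nearly spherical sets.} We show that there is $\delta_1>0$ such that every nearly spherical $E$ with $|E|=|B_1|$, barycenter $0$ and $\|u\|_{C^{1,\gamma}}\le\delta_1$ satisfies $\ene(E)\ge\ene(B_1)+\tfrac{c_0}{4}\|u\|_{H^1}^2$. The approach is a Taylor expansion along the flow connecting $B_1$ to $E$, as in \cite[Theorem 3.9]{AFM} or \cite[Section 8]{FFMMM}. The volume and barycenter constraints make the projection of $u$ onto $l=0,1$ of order $\|u\|_{L^2}^2$, so Step 1 applies up to a higher-order error. One must also check that the second variation at nearby sets is close to that at $B_1$, uniformly in $H^1$. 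For the perimeter this is classical. For $\nl$ one uses Lemma~\ref{lem:kernel_prop}(ii),(iv) and Lemma~\ref{lem:potential}, which bound the potential and its gradient using only $Y_\alpha\le Y_0$ and $|\nabla Y_\alpha|\le|z|^{-2}$. No homogeneity of $Y_\alpha$ is needed here, because the estimates are made at fixed $\alpha$ rather than via scaling.

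\emph{Step 3: from $C^{1,\gamma}$ to $L^1$.} We argue by contradiction. Suppose $E_h$ satisfies $|E_h\Delta B_1|\to0$, $|E_h|=|B_1|$ and $\ene(E_h)<\ene(B_1)$. Replace $E_h$ by a minimizer $F_h$ of the penalized functional
\[
\ene(F)+\Lambda\bigl||F|-|B_1|\bigr|+\sqrt{(|F\Delta B_1|-\varepsilon_h)^2+\varepsilon_h}
\]
among $F\subset B_R$, with $\varepsilon_h=|E_h\Delta B_1|$. The term $\beta\nl$ is Lipschitz in $L^1$ by Lemma~\ref{lem:kernel_prop}(iv): $|\nl(F)-\nl(G)|\le 4\pi|F\Delta G|$. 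Therefore the $F_h$ are uniform $(\Lambda',r_0)$-perimeter quasi-minimizers. By \cite[Chapter 21]{Mag} they converge to $B_1$ in $C^{1,\gamma}$, and after translating to barycenter zero they become nearly spherical with small $u_h$. Step 2 then gives $\ene(F_h)\ge\ene(B_1)$, which contradicts the choice of the $F_h$ once the penalization bookkeeping is carried out as in \cite{AFM}. Equality forces $u=0$, which gives the rigidity statement.

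I expect this step to be the main obstacle. One must justify that $\Lambda$ can be chosen so that the volume penalty is exact, and the comparison needs bounds uniform in $h$. Both should follow from the Lipschitz bound on $\nl$ above.

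\emph{Step 4: instability.} Let $\beta>\beta_*(\alpha)$. Take $u=Y_2^0$, for which the quadratic form in \eqref{eqn:expansion} is negative since the infimum is attained at $l=2$. Build a volume-preserving flow $X$ with $X\cdot\nu_{B_1}=u$, and set $E_t=\Phi_t(B_1)$. Since $B_1$ is critical, $\ene(E_t)=\ene(B_1)+\tfrac{t^2}{2}\delta^2\ene(B_1)[X]+o(t^2)<\ene(B_1)$ for small $t$, while $|E_t\Delta B_1|\to0$.
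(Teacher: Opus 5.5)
Your Steps 1 and 4 are fine. Step 4 simply makes explicit the paper's ``minimality implies stability'' combined with Theorem~\ref{thm: Stable Critical Threshold Beta}.

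Step 2 is a genuinely different and workable route. Differentiate $t\mapsto\nl(\{(1+tu(x))x\})$ twice along the radial graphs. The kernel $(1+tu(x))^2(1+tu(y))^2Y_\alpha\bigl((1+tu(x))x-(1+tu(y))y\bigr)$ differs from $Y_\alpha(x-y)$ by $O(t\|u\|_{C^1}|x-y|^{-1})$, thanks to $|\nabla Y_\alpha|\le|z|^{-2}$. The boundary terms involve only the potential of $E_t$ and its gradient. So the remainder is $o(1)\|u\|_{L^2}^2$ and no homogeneity is needed. This is simpler than the paper's Lemma~\ref{lem: Fuglede Yukawa Potential}, which follows the Riesz template of \cite{FFMMM}: it expands in $\alpha$ via $e^{-s}\ge 1-s+s^2/2-s^3/6$ and needs the sign conditions of Lemma~\ref{lem: Properties of Yukawa B_1}. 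Run it in this Fuglede form with $C^1$ smallness. An argument in the style of \cite{AFM}, through second variations at nearby sets, needs curvature ($W^{2,p}$) control that your Step 3 never produces.

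The genuine gap is in Step 3, and it has two parts.

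First, rigidity does not follow. Your contradiction hypothesis is $\ene(E_h)<\ene(B_1)$, and your penalty is centred at $B_1$. If instead $\ene(E_h)=\ene(B_1)$, all the bookkeeping is consistent with $F_h=x_h+B_1$ and $|F_h\Delta B_1|=\varepsilon_h$. So ``equality forces $u=0$'' tells you something about $F_h$, not about $E_h$. The paper avoids this by penalizing $\Lambda|F\Delta E_i|$, the distance to the competitor itself. Then, when $t_i=1$, \eqref{eqn: Energy Estimate} forces both $\|v_i\|_{H^1(\partial B_1)}=0$ and $|F_i\Delta E_i|=0$, so $E_i$ is a translate of $B_1$. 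You have two alternatives:
\begin{itemize}
\item use the quantitative hypothesis of \cite{AFM}, $\ene(E_h)<\ene(B_1)+c\,\alpha(E_h)^2$, with the translation-invariant asymmetry $\alpha(E)=\min_x|E\Delta(x+B_1)|$ also in the penalty; or
\item observe that an equality case $E$ is itself an $L^1$-local volume-constrained minimizer, hence a uniform quasi-minimizer, so regularity and Step 2 apply to $E$ directly.
\end{itemize}

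Second, the selection step is missing two ingredients.
\begin{itemize}
\item Comparing $F_h$ with $E_h$ in the class $\{F\subset B_R\}$ requires $E_h\subset B_R$. The paper secures this first, using the truncation lemma followed by a dilation by $\lambda_i\ge 1$. The perimeter saving from truncation absorbs the dilation, and $\calN_{\lambda\alpha}\le\nl$ handles the non-homogeneous term.
\item $C^{1,\gamma}$ convergence needs $F_h\to B_1$ in $L^1$. But an $L^1$-limit of the $F_h$ only minimizes $\ene(F)+\Lambda\bigl||F|-|B_1|\bigr|+|F\Delta B_1|$. With weight $1$ on the distance term, nothing forces that minimizer to be $B_1$, since the ball need not be a global minimizer for $\beta$ near $\beta_*(\alpha)$. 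You need a large weight there, together with two bounds: a calibration estimate such as $\Pe(F)\ge\Pe(B_1)-3|F\Delta B_1|$ (divergence theorem with the field $x/\max\{1,|x|\}$), and the $L^1$-Lipschitz bound on $\nl$. Then $B_1$ is the unique minimizer of the limit problem.
\end{itemize}
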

	
\begin{proof}[Proof of Theorem~\ref{thm: L1 Local Minimality}] 
	For $m>0$ and $\alpha \geq 0$ we have that
	\[
		\calF_{\alpha}(B) \leq \calF_{\alpha}(F) \qquad \text{whenever} \quad |F|=m \ \text{and} \ |F\Delta B[m]|\leq \epsilon_*m
	\]
	if and only if
	\begin{equation}\label{eqn: L1 Local Minimality}
		\calE_{\alpha',\beta}(B) \leq \calE_{ \alpha', \beta}(E) \qquad \text{whenever} \quad |E|=|B_1| \ \text{and} \ |E\Delta B_1|\leq \epsilon_*|B_1|
	\end{equation}
	with $\alpha' = \alpha\beta^{1/3}$ and $\beta = m/|B_1|$. By Theorem~\ref{thm: L1 Local Minimality Beta}, if $\beta < \beta_*(\alpha')$, i.e.
	\[
		m < |B_1|\beta_*\left(\alpha(m/|B_1|)^{1/3}\right),
	\]
or equivalently $m<m_*(\alpha)$, then there exists $\delta(\alpha,m)>0$ such that if $\epsilon_*<\delta/|B_1|$ then \eqref{eqn: L1 Local Minimality} holds.  
\end{proof}

Before we prove Theorem~\ref{thm: L1 Local Minimality Beta}, we obtain a refinement of Lemma~\ref{lem:nonloc_L2_control}. To this end, for $u\in H^1(\partial B_1)$, recall that $[u]_{\alpha}^2$ is defined in \eqref{eqn: alpha Norm to Y Operator} by
	\[
		[u]_{\alpha}^2 = \int_{\partial B_1}\int_{\partial B_1} |u(x)-u(y)|^2 Y_{\alpha}(x-y) \dd\mathcal{H}^2(y)\dd\mathcal{H}^2(x).
	\]
Then for nearly spherical sets we have the following control on the nonlocal term of $\ene$.

	\begin{lemma}\label{lem: Fuglede Yukawa Potential} 
		There exist $C>0$ and $t_0>0$, independent of $\alpha$, so that if $E\subset \mathbb{R}^3$ is an open set with $|E|=|B_1|$ and 
	\[
		\partial E = \Big\lbrace(1+t\,u(x))x \, : \, x\in \partial B_1\Big\rbrace
	\]
for some $u\in C^1(\partial B_1)$ with $\|u\|_{C^1(\partial B_1)}\leq 1/2$ and $t\in (0,2t_0)$, then
	\[
		\nl(B_1)-\nl(E)\leq \left([u]_{\alpha}^2-\mu_1^{\alpha}\|u\|_{L^2(\partial B_1)}^2\right) \frac{t^2}{2}+\left(\frac{C}{2}[u]_{\alpha}^2+B(\alpha) \|u\|_{L^2(\partial B_1)}^2\right)t^3 
	\]
where $B(\alpha)$ is a  positive function defined by
	\begin{align*}
		B(\alpha):=&\max\left\lbrace B_1(\alpha)+B_2(\alpha),B_2(\alpha)\right\rbrace, \quad B_1(\alpha):=\frac{15\alpha^2\nl''(B_1)+5\alpha^3\nl'''(B_1)}{24\Pe(B_1)},\\
		&B_2(\alpha):=-\frac{120\alpha \nl'(B_1)+90\alpha^2 \nl''(B_1)+25\alpha^3\nl'''(B_1)}{48\Pe(B_1)}.
	\end{align*}
	\end{lemma}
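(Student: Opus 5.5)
This is a Fuglede-type expansion in the spirit of \cite[Lemma 5.3]{FFMMM}, adapted to the non-homogeneous kernel. Write $E_t=\{(1+s\,u(x))x: x\in\pt B_1,\ 0\le s\le t\}$-type sets. Expand $\nl(E)$ in polar coordinates as
\[
\nl(E)=\int_{\pt B_1}\int_{\pt B_1}\int_0^{1+tu(x)}\int_0^{1+tu(y)} r^2\rho^2\,Y_\alpha(r x-\rho y)\dd\rho\dd r\dd\Htwo(y)\dd\Htwo(x),
\]
and set $g(t)\defeq\nl(E_t)$. The plan is to Taylor expand $g$ to third order in $t$, with an explicit remainder.

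The first two orders come from already available information. The value $g(0)=\nl(B_1)$ is given. The first derivative $g'(0)$ vanishes modulo the volume constraint: the potential $v_\alpha$ is constant on $\pt B_1$ by Lemma~\ref{lem:potential}. The constraint $|E|=|B_1|$ gives $\int_{\pt B_1}\big((1+tu)^3-1\big)=0$, so $\int u=-t\int u^2-\tfrac{t^2}{3}\int u^3$. This correction then feeds into the $t^2$ and $t^3$ coefficients through $\frac{\dd}{\dd r}\nl(B_r)|_{r=1}=5\nl(B_1)+\alpha\nl'(B_1)$ and its $r$-derivatives. Because of the almost-homogeneity $\nl(B_r)=r^5\calN_{r\alpha}(B_1)$, these derivatives are expressed through $\nl,\nl',\nl'',\nl'''$ at $B_1$.

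At second order, I would split $g''(0)$ in the standard way into a cross term $\int\int u(x)u(y)Y_\alpha(x-y)$ and a diagonal term. Rewriting $\int\int u(x)u(y)Y_\alpha=\mu_0^*\int u^2-\tfrac12[u]_\alpha^2$ and combining the diagonal term with the constraint correction via \eqref{eqn: mu1 rewrite} should give exactly $-\big([u]_\alpha^2-\mu_1^\alpha\|u\|_2^2\big)$. This is consistent with Theorem~\ref{thm:second_var}, where $c^2_{Y_\alpha,\pt B_1}=\mu_1^\alpha$, and it produces the $t^2/2$ term.

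The main obstacle is the third-order remainder, which must be uniform in $\alpha$. I would bound $\sup_{s\in(0,t)}|g'''(s)|$ by splitting it into two kinds of terms:
\begin{itemize}
\item terms in which the kernel is differentiated near the diagonal, controlled by $\frac{C}{2}[u]_\alpha^2$;
\item purely radial terms, controlled by $B(\alpha)\|u\|_2^2$.
\end{itemize}
For the first kind I would follow \cite{FFMMM}: write $Y_\alpha(rx-\rho y)$ and its $r,\rho$ derivatives using $|rx-\rho y|^2=(r-\rho)^2+r\rho|x-y|^2$. Then bound $|\nabla Y_\alpha|\le |z|^{-2}$ and $|\nabla^2 Y_\alpha|\lesssim |z|^{-3}$ with constants independent of $\alpha$, using $(1+s+s^2)e^{-s}\le C$ and Lemma~\ref{lem:kernel_prop}(ii). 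After symmetrization, the singular parts pair with $|u(x)-u(y)|^2$. The comparison $e^{-\alpha|rx-\rho y|}\le e^{-\alpha c|x-y|}$ is only up to a factor absorbed for $t<2t_0$, hence the $\alpha$-independent $C$. The radial parts reduce to the third $r$-derivative of $\nl(B_r)$ and the $\int u^3$ corrections. These are bounded by $\|u\|_\infty\|u\|_2^2\le\tfrac12\|u\|_2^2$, and the explicit combinations of $\nl',\nl'',\nl'''$ defining $B_1(\alpha)$ and $B_2(\alpha)$ appear. Taking the max handles the two sign cases of $\int u^3$-type terms. Lemma~\ref{lem: Properties of Yukawa B_1}(ii),(iii) provides the sign information needed, and fixes $t_0$ so that the lower-order radial polynomial stays nonnegative.
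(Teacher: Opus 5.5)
Your second-order bookkeeping is correct. On $\pt B_1$ one has $\pt_r v_\alpha=-\mu_1^*(\alpha)$, so the $t^2$ coefficient is exactly $\tfrac12([u]_\alpha^2-\mu_1^\alpha\|u\|_{L^2}^2)$. The third-order step, however, is the real content of the lemma, and you only assert it; the route you choose cannot produce the stated bound.

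A Lagrange remainder $\tfrac{t^3}{6}g'''(\xi)$ for $g(s)=\nl(E_s)$ evaluates everything at the intermediate set $E_\xi$. The radial part then involves derivatives of $r\mapsto\nl(B_r)$ at $r=1+\xi u(x)\neq 1$, i.e.\ of $\lambda\mapsto\calN_{\lambda\alpha}(B_1)$ away from $\lambda=1$. The remaining terms involve the potential of the non-spherical set $E_\xi$ and derivatives of $Y_\alpha$ up to second order. Nothing in your sketch turns these into combinations of $\nl^{(k)}(B_1)$, let alone into the specific ones defining $B_1(\alpha)$ and $B_2(\alpha)$. Those coefficients are artifacts of one particular chain of inequalities. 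Also, the maximum in $B(\alpha)$ reflects the unknown sign of the coefficient $I_2+I_3$ of a $t^5$ term (with $I_k=\alpha^k\nl^{(k)}(B_1)/(k!\,\Pe(B_1))$), not the sign of $\int u^3$.

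Your argument for the $\alpha$-independence of $C$ also fails as written. For $r,\rho<1$ one only has $|rx-\rho y|\geq\sqrt{r\rho}\,|x-y|$, and $e^{-\alpha\sqrt{r\rho}|x-y|}/e^{-\alpha|x-y|}$ is unbounded as $\alpha\to\infty$ for every fixed $t>0$. So no $\alpha$-independent $t_0$ absorbs it pointwise. Uniformity has to come from a comparison of quadratic forms (e.g.\ $[u]_{c\alpha}^2$ against $[u]_\alpha^2$ through their eigenvalues).

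The missing idea is to avoid any higher-order remainder in the radial part. Using the rearrangement of \cite{FFMMM}, $\nl(E)$ equals $\Pe(B_1)^{-1}\int_{\pt B_1}\nl(B_{1+tu(x)})\dd\Htwo(x)$ minus $\tfrac{t^2}{2}g(t)$. Here $g(t)$ is the rescaled double integral over radii between $1+tu(y)$ and $1+tu(x)$, with $g(0)=[u]_\alpha^2$. Then $\nl(B_R)=R^5\calN_{R\alpha}(B_1)$ and the one-sided inequality $e^{-s}\geq 1-s+\tfrac{s^2}{2}-\tfrac{s^3}{6}$, valid for all real $s$, give
\[
\nl(B_{1+tu(x)})\geq \Pe(B_1)\,(1+tu(x))^5\sum_{k=0}^3 I_k(\alpha)\,u(x)^k t^k ,
\]
with coefficients taken at $B_1$ and no remainder. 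The only genuine remainder estimate is then the first-order bound $|g'(\tau)|\leq Cg(0)$, which gives the $\tfrac C2[u]_\alpha^2t^3$ term. So the $\alpha$-uniformity issue only has to be faced there, not for a third derivative involving $\nabla^2 Y_\alpha$.

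The rest of $B(\alpha)$ is bookkeeping:
\begin{enumerate}
\item Eliminate $\int u$ via the volume constraint.
\item Use $(1+s)^3(1-(1+s)^2)\leq-2s-7s^2-9s^3$ multiplied by $\sum_k I_k(tu)^k$. The positivity of that sum is exactly what Lemma~\ref{lem: Properties of Yukawa B_1}(ii) and the choice of $t_0$ provide; your plan has no step that uses it.
\item Bound $|\lbr u\rbr_k|\leq 2^{2-k}\|u\|_{L^2}^2$.
\item Use the signs in Lemma~\ref{lem: Properties of Yukawa B_1}(iii).
\item Use $t<1$ to fold the $t^4$, $t^5$ and $t^6$ terms into $t^3$.
\end{enumerate}
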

	
\begin{proof} Introduce, for $r,\rho,\theta\geq 0$, the function
	\[
		f_{\theta}^{\alpha}(r,\rho) = \frac{r^2\rho^2 e^{-\alpha (|r-\rho|^2+r\rho\theta^2)^{1/2}}}{(|r-\rho|^2+r\rho\theta^2)^{1/2}}.
	\]
With this notation we have
	\[
		\nl(E) = \int_{\partial B_1}\int_{\partial B_1}\int_0^{1+tu(x)}\int_0^{1+tu(y)} f_{|x-y|}^{\alpha}(r,\rho) \dd\rho \dd r \dd\mathcal{H}^2(y) \dd\mathcal{H}^2(x).
	\]
We can rearrange the integrals to instead write
	\begin{multline}\label{eqn: Potential Exact}
		\nl(E) = \int_{\partial B_1}\int_{\partial B_1} \int_0^{1+t u(x)}\int_0^{1+tu(x)} f_{|x-y|}^{\alpha}(r,\rho) \dd\rho \dd r \dd\mathcal{H}^2(y)\dd\mathcal{H}^2(x) \\
 				-\frac{1}{2}\int_{\partial B_1}\int_{\partial B_1} \int_{1+tu(y)}^{1+t u(x)}\int_{1+tu(y)}^{1+tu(x)} f_{|x-y|}^{\alpha}(r,\rho) \dd\rho \dd r \dd\mathcal{H}^2(y)\dd\mathcal{H}^2(x).
	\end{multline}	
We start by estimating the first term. Observe that unlike in \cite[Lemma 5.3]{FFMMM} the function $f_{\theta}^{\alpha}$ is not homogeneous. In particular, we instead have for $\lambda>0$ that
	\[
		f_{\theta}^{\alpha}(\lambda r, \lambda \rho) = \lambda^3 f_{\theta}^{\alpha \lambda} (r,\rho).
	\]
Consequently, after a change of variables, we have 
	\begin{multline}\label{eqn: Potential Estimate Term 1.1}
		\int_{\partial B_1} \int_0^{1+t u(x)}\int_0^{1+tu(x)} f_{|x-y|}^{\alpha}(r,\rho) \dd\rho \dd r \dd\mathcal{H}^2(y) \\ = 
			 (1+tu(x))^5 \int_{\partial B_1} \int_0^1\int_0^1 f_{|x-y|}^{\alpha(1+tu(x))}(r,\rho) \dd\rho \dd r \dd\mathcal{H}^2(y).
	\end{multline}
Using the inequality $e^{-s} \geq 1-s+s^2/2-s^3/6$ with $s=\alpha (|r-\rho|^2+r\rho|x-y|^2)^{1/2} u(x)t$ we get
	\begin{align*}
		f_{|x-y|}^{\alpha(1+tu(x))}(r,\rho) &\geq f_{|x-y|}^{\alpha}(r,\rho) + \left(\alpha\frac{\dd}{\dd\alpha} f_{|x-y|}^{\alpha}(r,\rho)\right) u(x)\, t\\
		&\qquad+ \left(\frac{\alpha^2}{2}\frac{\dd^2}{\dd\alpha^2} f_{|x-y|}^{\alpha}(r,\rho)\right) u(x)^2\, t^2+ \left(\frac{\alpha^3}{6}\frac{\dd^3}{\dd\alpha^3} f_{|x-y|}^{\alpha}(r,\rho)\right) u(x)^3\, t^3.
	\end{align*}
We integrate this to get
	\begin{multline}\label{eqn: Potential Estimate Almost Homogeneous Part}
		\int_{\partial B_1} \int_0^1\int_0^1 f_{|x-y|}^{\alpha(1+tu(x))}(r,\rho) \dd\rho \dd r \dd\mathcal{H}^2(y)\\
		 \geq \frac{\nl(B_1)}{\Pe(B_1)} +\frac{\alpha \nl'(B_1)}{\Pe(B_1)}\,u(x)\,t +\frac{\alpha^2\nl''(B_1)}{2\Pe(B_1)}\,u(x)^2\,t^2+\frac{\alpha^3\nl'''(B_1)}{6\Pe(B_1)}\,u(x)^3\,t^3.
	\end{multline}
Indeed, we notice that the following integral is independent of $x$, and so
	\[
		\int_{\partial B_1} \int_0^1\int_0^1 f_{|x-y|}^{\alpha}(r,\rho) \dd\rho \dd r \dd\mathcal{H}^2(y) = \frac{\nl(B_1)}{\Pe(B_1)}.
	\]
Thus combining \eqref{eqn: Potential Estimate Term 1.1} and \eqref{eqn: Potential Estimate Almost Homogeneous Part} we get
	\begin{multline}\label{eqn: Potential Estimate Term 1.2}
		\int_{\partial B_1}\int_{\partial B_1}\int_0^{1+t u(x)}\int_0^{1+tu(x)} f_{|x-y|}^{\alpha}(r,\rho) \dd\rho \dd r \dd\mathcal{H}^2(y)\dd\mathcal{H}^2(x) \\
					\geq \int_{\partial B_1}(1+tu(x))^5\sum_{k=0}^3I_k(\alpha) u(x)^kt^k\dd\mathcal{H}^2(x)
	\end{multline}
where we define $I_k(\alpha)$ by
	\[
		I_k(\alpha) = \frac{\alpha^k \nl^{(k)}(B_1)}{k!\,\Pe(B_1)}.
	\]
By the volume constraint $|E|=|B_1|$ we have
	\begin{equation}\label{eqn: Volume Constraint}
		\int_{\partial B_1} (1+tu(x))^3 \dd\mathcal{H}^2(x) = \Pe(B_1).
	\end{equation}
Applying this and \eqref{eqn: Potential Estimate Term 1.2} in \eqref{eqn: Potential Exact} then yields
	\begin{multline}\label{eqn: Potential First Estimate}
		\nl(B_1)-\nl(E)\leq \frac{t^2}{2}g(t)-\int_{\partial B_1} (1+tu(x))^3\sum_{k=1}^3 I_k(\alpha)u(x)^kt^k\dd\mathcal{H}^2(x) \\
				+\int_{\partial B_1}(1+tu(x))^3(1-(1+tu(x))^2)\sum_{k=0}^3 I_k(\alpha)u(x)^kt^k\dd\mathcal{H}^2(x), 
	\end{multline}
where 
	\[
		g(t) = \int_{\partial B_1}\int_{\partial B_1} \int_{u(y)}^{u(x)}\int_{u(y)}^{u(x)} f_{|x-y|}^{\alpha}(1+tr,1+t\rho) \dd\rho \dd r \dd\mathcal{H}^2(y)\dd\mathcal{H}^2(x).
	\]
To estimate the first term in \eqref{eqn: Potential First Estimate}, observe that
	\[
		g(0) = \int_{\partial B_1}\int_{\partial B_1} |u(x)-u(y)|^2Y_{\alpha}(x-y) \dd\mathcal{H}^2(y)\dd\mathcal{H}^2(x) = [u]_{\alpha}^2.
	\]
Moreover, arguing as in \cite[Appendix D]{FFMMM}, we find a constant $C>0$, independent of $\alpha$, such that for $\tau\in (0,t)$ we have $g(t)=g(0)+g'(\tau)t$ and $|g'(\tau)| \leq Cg(0)$. To estimate the second term in \eqref{eqn: Potential First Estimate}, we first expand $(1+tu(x))^3$ to get
	\begin{equation}\label{eqn: Potential Estimate Term 2.1}
	\begin{aligned}
		&-\int_{\partial B_1}(1+tu(x))^3\sum_{k=1}^3I_k(\alpha)u(x)^kt^k\dd \mathcal{H}^2(x) \\
		&\qquad= -I_1(\alpha)\lbr u\rbr_1\,t-\left(3I_1(\alpha)+I_2(\alpha)\right)\lbr u\rbr_2\,t^2-\left(3I_1(\alpha)+3I_2(\alpha)+I_3(\alpha)\right)\lbr u\rbr_3\,t^3\\
		&\qquad\qquad -\left(I_1(\alpha)+3I_2(\alpha)+3I_3(\alpha)\right)\lbr u\rbr_4\,t^4-\left(I_2(\alpha)+3I_3(\alpha)\right)\lbr u\rbr_5\,t^5-I_3(\alpha)\lbr u\rbr_6\,t^6,
	\end{aligned}
	\end{equation}
where $\lbr u\rbr_k \defeq \int_{\partial B_1} u(x)^k \dd \mathcal{H}^2(x)$.
Next, notice that \eqref{eqn: Volume Constraint} further implies
	\begin{equation}\label{eqn: Volume Constraint Reduction}
		-t\lbr u\rbr_1 = t^2\lbr u\rbr_2+\frac{t^3}{3}\lbr u\rbr_3, 
	\end{equation}
and applying this in \eqref{eqn: Potential Estimate Term 2.1} yields
	\begin{equation}\label{eqn: Potential Estimate Term 2.2}
	\begin{aligned}
		&-\int_{\partial B_1}(1+tu(x))^3\sum_{k=1}^3I_k(\alpha)u(x)^kt^k\dd \mathcal{H}^2(x) \\
		&\qquad = -\left(2I_1(\alpha)+I_2(\alpha)\right)\lbr u\rbr_2\,t^2-\left(\frac{8}{3}I_1(\alpha)+3I_2(\alpha)+I_3(\alpha)\right)\lbr u\rbr_3\,t^3\\
		&\ \ \quad\qquad -\left(I_1(\alpha)+3I_2(\alpha)+3I_3(\alpha)\right)\lbr u\rbr_4\,t^4-\left(I_2(\alpha)+3I_3(\alpha)\right)\lbr u\rbr_5\,t^5-I_3(\alpha)\lbr u\rbr_6\,t^6.
	\end{aligned}
	\end{equation}
For the third term in \eqref{eqn: Potential First Estimate}, if $t_0$ is as in Lemma~\ref{lem: Properties of Yukawa B_1}(ii) then we  have for $s\in (-t_0, t_0)$ that
	\[
		\nl(B_1)+\alpha \nl'(B_1)s+\alpha^3\nl'''(B_1)\frac{s^3}{6}\geq 0.
	\]
As $\|u\|_{L^{\infty}(\partial B_1)}\leq 1/2$ and $t<2t_0$, this holds in particular for $s = u(x)t$. Since $\nl''(B_1)>0$ by Lemma~\ref{lem: Properties of Yukawa B_1}(iii), applying the above gives $\sum_{k=0}^3I_k(\alpha)u(x)^kt^k>0$. Thus, we can estimate
	\begin{align*}
		\int_{\partial B_1}&(1+tu(x))^3\big(1-(1+tu(x))^2\big)\sum_{k=0}^3 I_k(\alpha)u(x)^kt^k\dd\mathcal{H}^2(x)  \\
						   &\qquad\leq \int_{\partial B_1}\big(-2tu(x)-7t^2u(x)^2-9t^3u(x)^3\big)\sum_{k=0}^3 I_k(\alpha)u(x)^kt^k\dd\mathcal{H}^2(x).
	\end{align*}
Expanding the right-hand side then yields
	\begin{align*}
		\int_{\partial B_1}&(1+tu(x))^3\big(1-(1+tu(x))^2\big)\sum_{k=0}^3 I_k(\alpha)u(x)^kt^k\dd\mathcal{H}^2(x)  \\
						   &\qquad\leq -2I_0(\alpha)\lbr u \rbr_1\,t -\left(7I_0(\alpha)+2I_1(\alpha)\right) \lbr u \rbr_2\,t^2-\left(9I_0(\alpha)+7I_1(\alpha)+2I_2(\alpha)\right) \lbr u \rbr_3\,t^3\\
						   &\qquad\quad  -\left(9I_1(\alpha)+7I_2(\alpha)+2I_3(\alpha)\right)\lbr u \rbr_4\,t^4 -\left(9I_2(\alpha)+7I_3(\alpha)\right)\lbr u \rbr_5\, t^5-9I_3(\alpha)\lbr u \rbr_6\, t^6.
	\end{align*}
We again apply \eqref{eqn: Volume Constraint Reduction} to obtain
	\begin{equation}\label{eqn: Potential Estimate Term 3.1}
		\begin{aligned}
			&\int_{\partial B_1}(1+tu(x))^3\big(1-(1+tu(x))^2\big)\sum_{k=0}^3 I_k(\alpha)u(x)^kt^k\dd\mathcal{H}^2(x)  \\
				&\qquad\leq  -\left(5I_0(\alpha)+2I_1(\alpha)\right) \lbr u \rbr_2\,t^2-\left(\frac{25}{3}I_0(\alpha)+7I_1(\alpha)+2I_2(\alpha)\right) \lbr u \rbr_3\,t^3\\
						   &\qquad\qquad  -\left(9I_1(\alpha)+7I_2(\alpha)+2I_3(\alpha)\right)\lbr u \rbr_4\,t^4 \\
						   &\qquad\qquad\qquad\qquad-\left(9I_2(\alpha)+7I_3(\alpha)\right)\lbr u \rbr_5\, t^5-9I_3(\alpha)\lbr u \rbr_6\, t^6.
		\end{aligned}
	\end{equation}
Combining \eqref{eqn: Potential First Estimate}, $g(t)=g(0)+Cg(0)t$, \eqref{eqn: Potential Estimate Term 2.2}, \eqref{eqn: Potential Estimate Term 3.1}, and $\lbr u\rbr_k \leq 1/2^{k-2}\|u\|_{L^2(\partial(B_1))}^2$ for $k\geq 3$, we finally have
	\begin{multline*}
		\nl(B_1)-\nl(E)\leq \left([u]_{\alpha}^2-\left(10I_0(\alpha)+8I_1(\alpha)+2I_2(\alpha)\right)\|u\|_{L^2(\partial B_1)}^2\right) \frac{t^2}{2} \\
		+\left(\frac{C}{2}[u]_{\alpha}^2+\frac{1}{2}\left(\frac{25}{3}I_0(\alpha)+\frac{29}{3}I_1(\alpha)+5I_2(\alpha)+I_3(\alpha)\right)^- \|u\|_{L^2(\partial B_1)}^2\right)t^3 \\
		+ \frac{1}{4}\left(10I_1(\alpha)+10I_2(\alpha)+5I_3(\alpha)\right)^-\|u\|_{L^2(\partial B_1)}^2\,t^4 \\
		+ \frac{1}{8}\left(10I_2(\alpha)+10I_3(\alpha)\right)^-\|u\|_{L^2(\partial B_1)}^2\,t^5+ \frac{1}{16}\left(10I_3(\alpha)\right)^-\|u\|_{L^2(\partial B_1)}^2\,t^6.
	\end{multline*}
Now, by Lemma~\ref{lem: Properties of Yukawa B_1}(iii),
	\begin{multline*}
		\frac{25}{3}I_0(\alpha)+\frac{29}{3}I_1(\alpha)+5I_2(\alpha)+I_3(\alpha) \\
		= \frac{50\nl(B_1)+58\alpha \nl'(B_1)+15\alpha^2\nl''(B_1)+\alpha^3\nl'''(B_1)}{6\Pe(B_1)}>0,
	\end{multline*}
and
	\begin{align*}
		10I_1(\alpha)+10I_2(\alpha)+5I_3(\alpha)&=\frac{60\alpha \nl'(B_1)+30\alpha^2\nl''(B_1)+5\alpha^3\nl'''(B_1)}{6\Pe(B_1)} <0,\\
		10I_3(\alpha)&=\frac{5\alpha^3\nl'''(B_1)}{3\Pe(B_1)}<0.
	\end{align*}
So we have
	\begin{multline*}
		\nl(B_1)-\nl(E)\leq \left([u]_{\alpha}^2-\left(10I_0(\alpha)+8I_1(\alpha)+2I_2(\alpha)\right)\|u\|_{L^2(\partial B_1)}^2\right) \frac{t^2}{2} +\frac{C}{2}[u]_{\alpha}^2\,t^3 \\
		- \frac{10I_1(\alpha)+10I_2(\alpha)+5I_3(\alpha)}{4}\|u\|_{L^2(\partial B_1)}^2\,t^4 \\
		+ \frac{\left(5I_2(\alpha)+5I_3(\alpha)\right)^-}{4}\|u\|_{L^2(\partial B_1)}^2\,t^5- \frac{5I_3(\alpha)}{8}\|u\|_{L^2(\partial B_1)}^2\,t^6.
	\end{multline*}
Up to decreasing $t_0$, we may assume $t<1$; hence,
 	\begin{multline*}
		\nl(B_1)-\nl(E)\leq \left([u]_{\alpha}^2-\left(10I_0(\alpha)+8I_1(\alpha)+2I_2(\alpha)\right)\|u\|_{L^2(\partial B_1)}^2\right) \frac{t^2}{2} \\
		+\left(\frac{C}{2}[u]_{\alpha}^2-\frac{1}{8}\left(20I_1(\alpha)+20I_2(\alpha)+15I_3(\alpha)-\left(10I_2(\alpha)+10I_3(\alpha)\right)^-\right)\|u\|_{L^2(\partial B_1)}^2\right)t^3.
	\end{multline*}
This implies the result since by \eqref{eqn: mu1 rewrite} we have
	\[
		10I_0(\alpha)+8I_1(\alpha)+2I_2(\alpha) = \frac{10\nl(B_1)+8\alpha\nl'(B_1)+\alpha^2\nl''(B_1)}{\Pe(B_1)} = \mu_1^{\alpha},
	\]
and by direct computation
	\begin{multline*}
		-\frac{1}{8}\left(20I_1(\alpha)+20I_2(\alpha)+15I_3(\alpha)-\left(10I_2(\alpha)+10I_3(\alpha)\right)^-\right) \\
	=\frac{1}{8}\max\left\lbrace -20I_1(\alpha)+30I_2(\alpha)+25I_3(\alpha),-20I_1(\alpha)+20I_2(\alpha)+15I_3(\alpha)\right\rbrace =B(\alpha),
	\end{multline*}
as desired.
\end{proof}

Next we prove the following theorem which provides a quantitative bound on the energy deficit.
	\begin{theorem}\label{thm: Quantitative Deficit}
		There exists $C>0$, independent of $\alpha$, such that for every $\alpha\geq 0$ and $\beta \in (0, \beta_*(\alpha))$ there exists $\epsilon_{\beta}(\alpha)>0$ such that if $E$ is a nearly spherical set with $|E|=|B_1|$, $\int_{E}x \dd x=0$, and $\|u\|_{C^1(\partial B_1)}< \epsilon_{\beta}$, then
		\[
			\ene(E)-\ene(B_1)\geq C\left(1-\frac{\beta}{\beta_*(\alpha)}\right)\|u\|_{H^1(\partial B_1)}^2.
		\]
Moreover, $\epsilon_{\beta}(\alpha)$ can be chosen of the form
\[\epsilon_{\beta}(\alpha) = \frac{\epsilon_0}{\ell(\alpha)}\left(1-\frac{\beta}{\beta_*(\alpha)}\right)\]
for some $\epsilon_0>0$ and explicitly computable linear function $\ell(\alpha)$ with $\ell(0)>0$ and $\ell'(0)>0$. In particular, $\epsilon_{\beta}(\alpha)$ does not degenerate as $\alpha\to 0^+$. 
	\end{theorem}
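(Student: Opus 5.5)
Write $E$ with $\partial E=\{(1+u(x))x : x\in\partial B_1\}$, i.e.\ take $t=1$ in the setting of Lemma~\ref{lem: Fuglede Yukawa Potential}, or equivalently rescale $u=t\,v$ with $\|v\|_{C^1}=1/2$ and $t=2\|u\|_{C^1}$. We need $t<2t_0$, which holds once $\epsilon_\beta\leq t_0$.

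\textbf{Perimeter.} Use the classical Fuglede expansion as in \cite{Fug} and \cite[Section 5]{FFMMM}. Under the volume and barycenter constraints,
\[
\Pe(E)-\Pe(B_1)\geq \tfrac12\,\mathcal{QP}(u)-C_0\,\|u\|_{C^1}\|u\|_{H^1(\partial B_1)}^2 .
\]

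\textbf{Nonlocal term.} Lemma~\ref{lem: Fuglede Yukawa Potential}, after undoing the scaling, gives
\[
\nl(B_1)-\nl(E)\leq \tfrac12\,\mathcal{QN}_\alpha(u)+\|u\|_{C^1}\Bigl(\tfrac{C}{2}[u]_\alpha^2+B(\alpha)\|u\|_{L^2}^2\Bigr).
\]
Since $Y_\alpha\leq Y_0$, we have $[u]_\alpha^2\leq[u]_0^2\leq C\|u\|_{H^1}^2$, the last step being the standard bound of the $H^{1/2}$ seminorm by the $H^1$ norm.

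\textbf{Combining.} Subtracting $\beta$ times the nonlocal bound from the perimeter bound gives
\[
\ene(E)-\ene(B_1)\geq \tfrac12\bigl(\mathcal{QP}(u)-\beta\,\mathcal{QN}_\alpha(u)\bigr)-\|u\|_{C^1}\bigl(C_0+\beta C+\beta B(\alpha)\bigr)\|u\|_{H^1}^2 .
\]

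\textbf{Coercivity of the quadratic form.} Proposition~\ref{prop:second_var} handles the modes $l\geq2$. For each such $l$, the definition of $\beta_*$ gives $\beta(\mu_l^\alpha-\mu_1^\alpha)\leq(\beta/\beta_*)(\lambda_l-\lambda_1)$, so each coefficient is at least $(1-\beta/\beta_*)(\lambda_l-\lambda_1)$. For $l\geq2$ we also have $\lambda_l-\lambda_1\geq c(1+\lambda_l)$.

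The modes $l=0,1$ are controlled by the constraints. The volume constraint \eqref{eqn: Volume Constraint Reduction} gives $|a_0|\leq C\|u\|_{C^0}\|u\|_{L^2}$. The barycenter condition similarly gives $|a_1^m|\leq C\|u\|_{C^0}\|u\|_{L^2}$, as in Fuglede. Hence these modes cost only a term of size $\|u\|_{C^1}\|u\|_{H^1}^2$, and we obtain
\[
\mathcal{QP}(u)-\beta\,\mathcal{QN}_\alpha(u)\geq c\Bigl(1-\frac{\beta}{\beta_*}\Bigr)\|u\|_{H^1}^2-C(1+\beta\mu_1^\alpha)\|u\|_{C^1}\|u\|_{H^1}^2 .
\]
Note that $\mu_1^\alpha$ is bounded uniformly in $\alpha$ by its value at $\alpha=0$.

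\textbf{Choosing $\epsilon_\beta$.} The error terms are at most $\ell(\alpha)\,\|u\|_{C^1}\|u\|_{H^1}^2$, where $\ell$ collects $C_0+\beta_*(\alpha)\bigl(C+B(\alpha)+\mu_1^\alpha\bigr)$ (we used $\beta<\beta_*$). Taking $\epsilon_\beta=\epsilon_0(1-\beta/\beta_*)/\ell(\alpha)$ with $\epsilon_0$ small, the errors absorb into half of the main term. This yields the claim with a constant independent of $\alpha$.

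\textbf{Main obstacle.} The hardest step is showing that $\ell$ can be taken linear in $\alpha$ with $\ell(0)>0$, $\ell'(0)>0$, i.e.\ that $\beta_*(\alpha)B(\alpha)$ admits such a bound. I would first use the explicit formula for $\nl(B_1)$ from Lemma~\ref{lem: Properties of Yukawa B_1}(i) to compute $B_1,B_2$. These are combinations of $\alpha^k\nl^{(k)}$, so they behave like $O(\alpha)$ near $0$ and grow at most like $\alpha^{-2}$-scaled terms for large $\alpha$. I would then bound $\beta_*(\alpha)B(\alpha)$ by an affine function through a crude elementary estimate. If that fails, it suffices to replace $\ell$ by any explicit upper bound that is positive at $0$.
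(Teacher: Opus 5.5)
Your derivation of the deficit inequality itself is sound and follows essentially the paper's route: the Fuglede expansion for $\Pe$, Lemma~\ref{lem: Fuglede Yukawa Potential} for $\nl$, $(1-\beta/\beta_*)$-coercivity on the modes $l\geq2$, and the volume and barycenter constraints for $l=0,1$ (which the paper imports from \cite[Eqs. (2.25), (2.28)]{FFMMM}). The gap is in the ``Moreover'' part: the $\ell(\alpha)$ you build is not linear.

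You bound $\beta\,\tfrac{C}{2}[u]_\alpha^2$ via $Y_\alpha\leq Y_0$ and then replace $\beta$ by $\beta_*(\alpha)$. So your $\ell$ contains the term $C\beta_*(\alpha)$. By \eqref{eqn:beta_star}, however, $\beta_*(\alpha)\sim\alpha^3/(2\pi)$ as $\alpha\to\infty$, so your $\ell$ grows cubically. Using the kernel's decay alone does not fix this: $\mu_l^\alpha<2\mu_0^*(\alpha)\approx 4\pi/\alpha$ still leaves growth like $\alpha^2$. In fact $[u]_0^2$ is already controlled by $\|u\|_{L^2}^2$, since its eigenvalues $16\pi l/(2l+1)$ are bounded, so the loss is entirely in the factor $\beta_*(\alpha)$. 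Your remark that $\mu_1^\alpha\leq\mu_1^0$ has the same problem: $\beta_*(\alpha)\mu_1^0$ is again cubic. Your ``main obstacle'' paragraph singles out $\beta_*(\alpha)B(\alpha)$ and overlooks these terms. The fallback to ``any explicit upper bound'' gives up the linear claim.

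The missing idea is never to pay a factor $\beta_*(\alpha)$. Instead, absorb every $\beta$-weighted nonlocal quantity into $\mathcal{QP}$ through the spectral definition of $\beta_*$.

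\begin{itemize}
\item Write $[u]_\alpha^2=\mathcal{QN}_\alpha(u)+\mu_1^\alpha\|u\|_{L^2}^2$.
\item Since $\beta(\mu_l^\alpha-\mu_1^\alpha)\leq\lambda_l-\lambda_1$ for all $l\geq2$, you get $\beta\,\mathcal{QN}_\alpha(u)\leq\mathcal{QP}(u)\leq\|u\|_{H^1}^2$ uniformly in $\alpha$, up to low-mode corrections.
\item For the $L^2$ part, the paper proves $\mu_2^\alpha\geq\frac65\mu_1^\alpha$ by an elementary Taylor estimate of $(3\alpha^2-15)e^{2\alpha}$. Combined with the low-mode estimate, this gives $\mu_1^\alpha\|u\|_{L^2}^2\leq 10\,\mathcal{QN}_\alpha(u)$. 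In particular $\beta_*(\alpha)\mu_1^\alpha=4\mu_1^\alpha/(\mu_2^\alpha-\mu_1^\alpha)\leq20$.
\item From the explicit formulas, $B(\alpha)\leq\tfrac{25}{12}(\alpha+1)\mu_1^\alpha$.
\end{itemize}

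Together these give $\beta\bigl([u]_\alpha^2+\tfrac{B(\alpha)}{C}\|u\|_{L^2}^2\bigr)\leq(c_1\alpha+c_2)\,\beta\,\mathcal{QN}_\alpha(u)\leq(c_1\alpha+c_2)\,\mathcal{QP}(u)$. The only source of $\alpha$-growth is then the ratio $B(\alpha)/\mu_1^\alpha$, and this is exactly what makes $\ell$ linear with $\ell(0),\ell'(0)>0$.
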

	
\begin{proof}
As in Lemma~\ref{lem: Fuglede Yukawa Potential} we consider $u\in C^1(\partial B_1)$ with $\|u\|_{C^1(\partial B_1)}\leq 1/2$ and $t\in (0,2\epsilon_{\beta})$ so that the set $E\subset \mathbb{R}^3$ with boundary 
	\[
		\partial E = \Big\lbrace (1+tu(x))x \colon x\in \partial B_1\Big\rbrace
	\]
satisfies $|E|=|B_1|$ and $\int_E x \dd x=0$. If $\epsilon_{\beta}$ is small enough, then for some $C>0$
	\begin{multline*}
		\ene(E)-\ene(B_1) \geq \frac{t^2}{2}\left(\mathcal{QP}(u)-\beta \mathcal{QN}_{\alpha}(u)\right) \\
  -Ct^3\left(\int_{\partial B_1}|\nabla_{\tau} u|^2 \dd\mathcal{H}^2+\lambda_1\|u\|_{L^2}^2+\beta \left([u]_{\alpha}^2+\frac{B(\alpha)}{C}\right) \|u\|_{L^2(\partial B_1)}^2\right).
	\end{multline*}	
Recall now from \cite[Eqs. (2.25) and (2.28)]{FFMMM} that for $\epsilon_{\beta}$ small enough we have
	\begin{gather}
		\frac{1}{2}\sum_{l=2}^{\infty}\sum_{m=-l}^l |a_l^m(u)|^2 - \sum_{m=-1}^1 |a_1^m(u)|^2-|a_0(u)|^2 \geq \frac{1}{4}\|u\|_{L^2(\partial B_1)}^2, \label{eqn: Sums Bound L2}	\\
\int_{\partial B_1} |\nabla_{\tau}u|^2 \dd\mathcal{H}^2 - \lambda_1\|u\|_{L^2(\partial B_1)}^2 \geq \frac{1}{4}\int_{\partial B_1} |\nabla_{\tau}u|^2 \dd\mathcal{H}^2+\frac{\lambda_1}{4}\|u\|_{L^2(\partial B_1)}^2 .\label{eqn: Poincare}
	\end{gather}
We estimate first $\mathcal{QP}(u)-\beta\mathcal{QN}_{\alpha}(u)$. By \eqref{eqn:expansion} and the definition of $\beta_*(\alpha)$ in \eqref{eqn: beta* Definition}
	\begin{align*}
		\mathcal{QP}(u)-\beta \mathcal{Q\nl}(u) &= \sum_{l=2}^{\infty}\sum_{m=-l}^{l}\left((\lambda_l - \lambda_1)-\beta(\mu_l^{\alpha} - \mu_1^\alpha)\right) |a_l^m(u)|^2 \\
					&\geq \left(1-\frac{\beta}{\beta_*(\alpha)}\right)\sum_{l=2}^{\infty}\sum_{m=-l}^{l}(\lambda_l - \lambda_1) |a_l^m(u)|^2 .
	\end{align*}
Since
	\[
	 \sum_{l=2}^{\infty}\sum_{m=-l}^{l}(\lambda_l - \lambda_1) |a_l^m(u)|^2 = \int_{\partial B_1}|\nabla_{\tau} u|^2 \dd\mathcal{H}^2 - \lambda_1 \|u\|_{L^2(\partial B_1)}^2,
	\]
by applying \eqref{eqn: Poincare} we therefore have
	\begin{equation}\label{eqn: Functional Difference Bound}
		\mathcal{QP}(u)-\beta \mathcal{Q\nl}(u) \geq  \frac{1}{4}\left(1-\frac{\beta}{\beta_*(\alpha)}\right)\left(\int_{\partial B_1}|\nabla_{\tau} u|^2 \dd\mathcal{H}^2 + \lambda_1 \|u\|_{L^2(\partial B_1)}^2\right).
	\end{equation}
In particular this implies $\mathcal{QP}(u)\geq \beta \mathcal{QN}_{\alpha}(u)$. We now control the cubic terms. From Lemma~\ref{lem: Eigenvalue Y Properties} we have
	\[
		\frac{\mu_2^{\alpha}}{\mu_1^{\alpha}} = \frac{(3\alpha^2-9)e^{\alpha} + \left((\alpha^2+3\alpha+3)^2-\alpha^4\right)e^{-\alpha}}{\alpha^2e^{\alpha}-(2\alpha^4+2\alpha^3+\alpha^2)e^{-\alpha}} \geq \frac{6}{5} 
	\]
This follows since the above is equivalent to
	\[
		(3\alpha^2-15)e^{2\alpha} \geq -15-30\alpha-27\alpha^2-14\alpha^3-4\alpha^4,
	\]
and noting that the right-hand side is the fourth order Taylor expansion of the left-hand side with a positive remainder. Then, observe as in \cite[Eq. (8.9)]{FFMMM} that, from \eqref{eqn: Sums Bound L2}, the above, and Lemma~\ref{lem: Eigenvalue Y Properties}, we have
	\[
\mu_1^{\alpha}\|u\|_{L^2(\partial B_1)} \leq 2\mu_1^{\alpha}\sum_{l=2}^{\infty}\sum_{m=-l}^l|a_l^m(u)|^2 \leq 10\sum_{l=2}^{\infty}\sum_{m=-l}^l(\mu_l^{\alpha}-\mu_1^{\alpha})|a_l^m(u)|^2 = 10\mathcal{QN}_{\alpha}(u).
	\]
We are thus reduced to controlling $B(\alpha)$ in terms of $\mu_1^{\alpha}$. From Lemma~\ref{lem: Properties of Yukawa B_1}(i) and \eqref{eqn: mu1}, we deduce
	\[
		B(\alpha) \leq \frac{25}{12}(\alpha+1)\mu_1^{\alpha},
	\]
since in particular for $\alpha$ sufficiently large $B(\alpha)=B_2(\alpha)$. Hence, for some $c_1,c_2>0$
	\begin{align}\label{eqn: Cubic Bound}
		\beta \left([u]_{\alpha}^2+\frac{B(\alpha)}{C} \|u\|_{L^2(\partial B_1)}^2\right) &= \beta\mathcal{QN}_{\alpha}(u)+\beta \left(\mu_1^{\alpha}+\frac{B(\alpha)}{C}\right) \|u\|_{L^2(\partial B_1)}^2\nonumber \\
					&\leq  \left(c_1\alpha+c_2\right)\beta \mathcal{QN}_{\alpha}(u) \leq  \left(c_1\alpha+c_2\right)\mathcal{QP}(u).
	\end{align}
Again applying \eqref{eqn: Poincare} we have
	\[
	\mathcal{QP}(u) = \int_{\partial B_1} |\nabla_{\tau} u|^2 \dd \mathcal{H}^2-\lambda_1 \|u\|_{L^2(\partial B_1)}^2 \geq \frac{1}{4}\left(\int_{\partial B_1} |\nabla_{\tau} u|^2 \dd \mathcal{H}^2+\lambda_1 \|u\|_{L^2(\partial B_1)}^2\right).
	\]
Combining this with \eqref{eqn: Functional Difference Bound} and \eqref{eqn: Cubic Bound} we have thus shown
	\[
		\ene(E)-\ene(B_1)  
				\geq \left(\frac{t^2}{8}\left(1-\frac{\beta}{\beta_*(\alpha)}\right)-\ell(\alpha)t^3\right)\left(\int_{\partial B_1}|\nabla_{\tau} u|^2 \dd\mathcal{H}^2 + \lambda_1 \|u\|_{L^2(\partial B_1)}^2\right)
	\]
for some linear function $\ell(\alpha)$ with $\ell(0), \ell'(0)>0$. Therefore, for sufficiently small $t$ depending on $\alpha$, we have
	\[
		\ene(E)-\ene(B_1)\geq ct^2\left(1-\frac{\beta}{\beta_*(\alpha)}\right)\|u\|_{H^1(\partial B_1)}^2,
	\]
as desired.
\end{proof}
We now prove Theorem \ref{thm: L1 Local Minimality Beta}. The core of the proof is essentially the same as in \cite[Lemma 8.5]{FFMMM}, but with some new ideas to overcome the lack of homogeneity in $\nl$.

\begin{proof}[Proof of Theorem~\ref{thm: L1 Local Minimality Beta}] Since minimality implies stability, if $B_1$ is a local volume-constrained minimizer of $\ene$ then $\beta \leq \beta_*(\alpha)$. So we only need to prove that if $\beta < \beta_*(\alpha)$ then $B_1$ is a local volume-constrained minimizer of $\ene$. Supposing not, then there exists $\beta<\beta_*(\alpha)$ and a sequence $\{E_i\}_{i=1}^{\infty}$ of sets of finite perimeter with 
	\begin{equation}\label{eqn: Contradiction Hypothesis}
	|E_i|=|B_1| \quad \text{and} \quad |E_i\Delta B_1|\to 0, \quad \text{but} \quad \ene(E_i)\leq \ene(B_1).
	\end{equation}
The idea now is to use a selection principle type argument to generate a sequence $F_i$ with better regularity that allows us to apply Theorem~\ref{thm: Quantitative Deficit} to derive a contradiction.

\textit{Step one:} We first show that the sequence $\{E_i\}_{i=1}^{\infty}$ above can be chosen such that $E_i\subset B_R$ for some $R>0$. Fix $\eta>0$, to be chosen later, so that for $i$ large enough we have $|E_i\Delta B_1|<\eta$. By the truncation lemma \cite[VI. 14]{Almgren}, see also \cite[Lemma 29.12]{Mag}, there exists $c_1,c_2>0$ and $r_i>0$ such that $1\leq r_i \leq 1+c_1\eta^{1/3}$ and
	\begin{equation}\label{eqn: Truncation}
		\Pe(E_i\cap B_{r_i}) \leq \Pe(E_i) - \frac{|E_i\setminus B_{r_i}|}{c_2\eta^{1/3}}.
	\end{equation}
\indent Now let $\lambda_i$ be such that $E_i':=\lambda_i(E_i\cap B_{r_i})$ satisfies $|E_i'|=|B_1|$. We aim to show $\{E_i'\}_{i=1}^{\infty}$ still satisfies \eqref{eqn: Contradiction Hypothesis}, while being uniformly bounded in some ball. Since $r_i\to 1^+$ and $|E_i\Delta B_1|\to 0$, it follows that $\lambda_i\to 1^+$ and $|E_i'\Delta B_1|\to 0$. In particular, for $i$ large enough we have that $E_i'\subset B_R$ for $R = 2+c_1\eta^{1/3}$. We now show $\ene(E_i') \leq \ene(B_1)$. Since $\lambda_i \geq 1$, we have $\calN_{\alpha \lambda_i}(E)\leq \nl(E)$. Hence for $\lambda_i$ sufficiently close to $1$ we have
	\begin{align*}
		\ene(E_i') &= \lambda_i^5\left(\lambda_i^{-3}\Pe(E_i\cap B_{r_i})+\beta \calN_{\alpha \lambda_i}(E_i\cap B_{r_i})\right)\\
		&\leq (1+C|E_i \setminus B_{r_i}|)\left(\Pe(E_i \cap B_{r_i}) + \beta \nl(E_i \cap B_{r_i})\right).
	\end{align*}
Next, applying \eqref{eqn: Truncation}, $\nl(E_i\cap B_{r_i})\leq \nl(E_i)$, and \eqref{eqn: Contradiction Hypothesis} we further estimate
	\[
		\ene(E_i') \leq \ene(E_i)+C\left(\ene(B_1) - \frac{1}{c_2\,\eta^{1/3}} \right)|E_i \setminus B_{r_i}|.
	\]
Hence if $\eta$ was chosen so that $\eta^{1/3}<1/(c_2\,\ene(B_1))$ then the term in parentheses is negative and we conclude $\ene(E_i')\leq \ene(E_i)\leq \ene(B_1)$.

\textit{Step two:} Assuming now that the sequence $\{E_i\}_{i=1}^{\infty}$ additionally satisfies $E_i \subset B_R$ with $R>0$ as above, we aim to generate a new sequence $\{F_i\}_{i=1}^{\infty}$ with better regularity, namely that the $F_i$ are nearly spherical. To this end for $\Lambda>0$ we consider the variational problem for each $i\in \mathbb{N}$
	\begin{equation}\label{eqn: Selection Principle}
		\inf\left\lbrace \ene(E)+\Lambda\,|E\Delta E_i| \ | \ E\subset \mathbb{R}^3 \right\rbrace.
	\end{equation}
Note that in the above we assume no volume constraint nor that a priori $E\subset B_R$. We first show these problems admit a minimizer. Indeed, if $F=E\cap B_R$ then $\ene(F) \leq \ene(E)$ by trivial inclusions and $|F\Delta E_i|\leq |E\Delta E_i|$. Hence we can assume $E\subset B_R$, and a minimizer $F_i$ exists with $F_i\subset B_R$. We now claim that there exists a $\Lambda'>0$ such that
	\begin{equation}\label{eqn: Lambda Minimality}
		\Pe(F_i) \leq \Pe(E) + \Lambda'\,|E\Delta F_i|  \qquad \text{for all} \ E\subset \mathbb{R}^3.
	\end{equation}
First if $E\subset \mathbb{R}^3$ then
	\begin{equation}\label{eqn: Nonlocal Lipschitz}
		\nl(E)-\nl(F_i) \leq 2\int_E\int_{E\setminus F_i} \frac{e^{-\alpha|x-y|}}{|x-y|} \dd x\dd y \leq 2|E\setminus F_i| \int_{B_r} \frac{e^{-\alpha|z|}}{|z|}\dd z
	\end{equation}
where $r =(|E|/|B_1|)^{1/3}$. The latter integral is estimated as 
	\[
		\int_{B_r} \frac{e^{-\alpha |z|}}{|z|} \dd z = \frac{4\pi(1-(1+\alpha r)e^{-\alpha r})}{\alpha^2} \leq \frac{4\pi}{\alpha^2}.
	\]
Hence, by \eqref{eqn: Selection Principle} we get
	\[
		\Pe(F_i) \leq \Pe(E)+\beta \nl(E)-\beta \nl(F_i) + \Lambda\, (|E\Delta E_i|-|F_i\Delta E_i|) \leq \Pe(E) + \left(\frac{8\pi\beta}{\alpha^2}+\Lambda\right) |E\Delta F_i|
	\]
which proves \eqref{eqn: Lambda Minimality} for $\Lambda' = 8\pi \beta/\alpha^2+\Lambda$. Next, by testing \eqref{eqn: Selection Principle} with $E_i$ and applying \eqref{eqn: Contradiction Hypothesis} we get
	\begin{equation}\label{eqn: Selection Principle Energy Estimate}
		\ene(F_i) + \Lambda\, |F_i\Delta E_i| \leq \ene(E_i) \leq \ene(B_1)
	\end{equation}
which implies by the triangle inequality, since $|E_i \Delta B_1|\to 0$, that for sufficiently large $i$
	\[
		|F_i\Delta B_1| \leq \frac{2\ene(B_1)}{\Lambda}.
	\]
By standard regularity theory, see for instance \cite[Theorem 26.6]{Mag}, by choosing $\Lambda$ (depending on $\alpha, \beta>0$) sufficiently large  there exists $u_i \in C^{1,\gamma}(\partial B_1)$ where
	\[
		\partial F_i = \Big\lbrace(1+u_i(x))x \, : \, x\in \partial B_1\Big\rbrace \qquad \text{and} \qquad \|u_i\|_{L^{\infty}(\partial B_1)} < \epsilon_{\beta}
	\]
with $\epsilon_{\beta}$ as in Theorem \ref{thm: Quantitative Deficit}.

\textit{Step three:} We conclude by deriving a contradiction by applying Theorem \ref{thm: Quantitative Deficit}. To this end set $G_i = x_i+t_iF_i$ where $x_i$ is such that $\int_{G_i} x \dd x = 0$ and $t_i>0$ is such that $|G_i|=|B_1|$. Up to increasing the value of $\Lambda$, we find $v_i \in C^{1,\gamma}(\partial B_1)$ such that
	\[
		\partial G_i = \Big\lbrace(1+v_i(x))x \, : \, x\in \partial B_1\Big\rbrace \qquad \text{and} \qquad \|v_i\|_{L^{\infty}(\partial B_1)} < \epsilon_{\beta}.
	\]
We can now apply Theorem \ref{thm: Quantitative Deficit} to each $G_i$ to conclude
	\begin{equation}\label{eqn: Selection Principle Contradiction Estimate}
		\ene(B_1)+C\left(1-\frac{\beta}{\beta_*(\alpha)}\right)\|v_i\|_{H^1(\partial B_1)}^2 \leq \ene(G_i) = t_i^2\Pe(F_i) + \beta\,t_i^5 \calN_{t_i\alpha}(F_i).	
	\end{equation}
We first show that $t_i\geq 1$. To this end we prove when $t_i<1$ that
	\begin{equation}\label{eqn: Nonlocal Scaling Selection Principle}
		t_i^3\calN_{t_i\alpha}(F_i) \leq \nl(F_i)+c|F_i\Delta E_i|
	\end{equation}
for some $c>0$ depending only on $\alpha$. By the Lipschitz estimate \eqref{eqn: Nonlocal Lipschitz} we have 
	\[
		|\nl(E_i)-\nl(F_i)| \leq \frac{8\pi^2}{\alpha^2}|F_i\Delta E_i|,
	\]
so that, since $t_i< 1$, 
	\[
		t_i^3\calN_{t_i\alpha}(F_i) \leq \nl(F_i)+ \frac{16\pi^2}{\alpha^2}|F_i\Delta E_i|+|t_i^3\calN_{t_i\alpha}(E_i)-\nl(E_i)| .
	\]
We estimate the latter difference as 
	\[
		|t_i^3\calN_{t_i\alpha}(E_i)-\nl(E_i)| \leq (1-t_i^3)\,\calN_{t_i\alpha}(E_i)+|\calN_{t_i\alpha}(E_i)-\nl(E_i)|.
	\]
For the first term, since $|E_i|=|B_1|$, then by Lemma~\ref{lem:kernel_prop}(iv)
	\[
		(1-t_i^3)\,\calN_{t_i\alpha}(E_i) \leq 3(1-t_i)\calN_0(B_1) \leq \frac{32\pi^2}{5}(1-t_i).
	\]
For the second term, using again $|E_i|=|B_1|$, we have
	\[
		|\calN_{t_i\alpha}(E_i)-\nl(E_i)| \leq \int_{E_i}\int_{E_i} \frac{|e^{-t_i\alpha|x-y|}-e^{-\alpha|x-y|}|}{|x-y|} \dd x \dd y \leq \alpha(1-t_i)|B_1|^2.
	\]
Finally, since $|t_iF_i|=|B_1|$ and $t_i< 1$ then 
	\[
	|F_i\Delta E_i|\geq ||F_i|-|B_1||=|B_1|\left(\frac{1}{t_i^3}-1\right) \geq  |B_1|(1-t_i),
	\]
and \eqref{eqn: Nonlocal Scaling Selection Principle} is proved with $c=16\pi^2/\alpha^2+24\pi/5+4\pi\alpha/3$. Finally, from \eqref{eqn: Selection Principle Energy Estimate} and \eqref{eqn: Nonlocal Scaling Selection Principle} we have
	\begin{align*}
		t_i^2\Pe(F_i) + \beta\, t_i^5\calN_{t_i\alpha}(F_i)  &\leq t_i^2\left(\ene(F_i) + c\beta|F_i\Delta E_i|\right) \\
		&\leq  t_i^2\left(\ene(B_1) +(c\beta-\Lambda )|F_i\Delta E_i|\right) < \ene(B_1)
	\end{align*}
in contradiction to \eqref{eqn: Selection Principle Contradiction Estimate} if $\Lambda>c\beta$, which depends only on $\alpha$ and $\beta$. Hence $t_i \geq 1$ as desired. In this case, we have $\calN_{t_i\alpha}(F_i) \leq \calN_{\alpha}(F_i)$, and applying \eqref{eqn: Selection Principle Energy Estimate} in \eqref{eqn: Selection Principle Contradiction Estimate} gives
	\begin{equation}\label{eqn: Energy Estimate}
		t_i^{-5}\ene(B_1)+Ct_i^{-5}(1-\beta/\beta_*(\alpha))\|v_i\|_{H^1(\partial B_1)}^2 +\Lambda|F_i\Delta E_i|\leq \ene(B_1).
	\end{equation}
Now, if $t_i=1$ then $F_i=E_i$ and $\|v_i\|_{H^1(\partial B_1)}=0$. Hence $E_i=B_1(-x_i)$, a contradiction to \eqref{eqn: Contradiction Hypothesis} unless equality holds. If instead $t_i>1$ then $|F_i\Delta E_i|\geq ||F_i|-|B_1||=|B_1|(1-t_i^{-3})$. Applying this in \eqref{eqn: Energy Estimate} gives
	\[
	\Lambda |B_1|\left(1-\frac{1}{t_i^3}\right)	\leq \ene(B_1)\left(1-\frac{1}{t_i^5}\right)
	\]
where for instance taking $\Lambda>2\ene(B_1)$ yields a contradiction.
\end{proof}

\begin{remark}
	We can deduce the rigidity statement in Theorem~\ref{thm: L1 Local Minimality Beta} by keeping the $\|v_i\|_{H^1(\partial B_1)}$ term in \eqref{eqn: Energy Estimate}. In \cite[Lemma 8.5]{FFMMM}, the authors choose to discard this term; were they to keep it, they would also deduce the same rigidity.
\end{remark}


\appendix
\section{Characterization of the critical threshold \texorpdfstring{$\beta_*(\alpha)$}{β\_*(α)}}\label{sec:appendix}

Here we prove the identity \eqref{eqn: beta* Value} by obtaining the monotonicity of the sequence that defines $\beta_*(\alpha)$ as in \eqref{eqn: beta* Definition}. Similar properties of eigenvalues have been shown in \cite{BCT,FFMMM}, where in the Riesz setting the analogous eigenvalues are ratios of Gamma functions, which exhibit a product recursion across modes and the infimum of ratios can be identified via an induction argument. Such a strategy is not available here. Instead, following the classical literature on potential theory and boundary integral equations (cf. \cite{HW,McLean}), we use a variational characterization of the eigenvalues, which replaces the product structure. To this end, we define
	\[
		\beta_*^{(l)}(\alpha)\defeq \frac{\lambda_l - \lambda_1}{\mu_l^\alpha - \mu_1^\alpha}.
	\]
Our goal is to prove the following result.
	\begin{proposition}\label{prop:monotonicity_beta*}
	 		For any $\alpha>0$ the sequence $(\beta_*^{(l)}(\alpha))_{l\in\N}$ is strictly increasing.
	\end{proposition}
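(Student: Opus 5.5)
Since $\lambda_l-\lambda_1=(l-1)(l+2)$ and $\mu_l^\alpha-\mu_1^\alpha=2(\mu_1^*(\alpha)-\mu_l^*(\alpha))$, the claim $\beta_*^{(l)}<\beta_*^{(l+1)}$ for $l\geq 2$ is equivalent to
\[
\frac{\mu_1^*-\mu_{l+1}^*}{\mu_1^*-\mu_l^*}<\frac{(l)(l+3)}{(l-1)(l+2)} .
\]
In other words, the normalized decrements $d_l\defeq(\mu_1^*-\mu_l^*)/(\lambda_l-\lambda_1)$ are strictly decreasing in $l\geq2$. (The statement is about $l\in\N$ with $l\geq 2$, since $\beta_*^{(1)}$ is undefined.)

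The plan is to obtain a representation of $\mu_l^*(\alpha)=4\pi I_{l+1/2}(\alpha)K_{l+1/2}(\alpha)$ as a superposition of Riesz-type eigenvalues, for which the analogous monotonicity is known. Concretely, I would use the subordination (Laplace-type) formula $Y_\alpha(z)=\int_0^\infty \frac{1}{\sqrt{\pi s}}e^{-|z|^2/(4s)... }$; more conveniently, write $e^{-\alpha r}/r=\int_0^\infty \rho_\alpha(\tau)\, e^{-\tau r^2}\dd\tau$ with $\rho_\alpha\geq0$. This holds because $e^{-\alpha\sqrt{x}}/\sqrt{x}$ is completely monotone in $x=r^2$, being a product of completely monotone functions. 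On $\partial B_1$ one has $|x-y|^2=2-2x\cdot y$, so each Gaussian $e^{-\tau|x-y|^2}=e^{-2\tau}e^{2\tau x\cdot y}$ has Funk--Hecke eigenvalues $\gamma_l(\tau)=4\pi e^{-2\tau} i_l(2\tau)$, up to normalization. Hence $\mu_l^*=\int_0^\infty\rho_\alpha(\tau)\gamma_l(\tau)\dd\tau$, and $d_l=\int\rho_\alpha\, d_l^{G}(\tau)\dd\tau$, where $d_l^G(\tau)=(\gamma_1-\gamma_l)/(\lambda_l-\lambda_1)$. It then suffices to prove that $d_l^G(\tau)$ is nonincreasing in $l$ for each $\tau>0$, with strict decrease on a set of positive $\rho_\alpha$-measure.

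For the Gaussian step I would use the power series $e^{2\tau t}=\sum_n (2\tau t)^n/n!$ and the Funk--Hecke eigenvalue of each monomial $t^n$, which is explicit and vanishes for $l>n$ or $l\not\equiv n \pmod 2$. This reduces the problem to the monomial kernels $t^n$ (equivalently $|x-y|^{2k}$ after re-expansion). As an alternative, I would reduce to the Riesz kernels $|x-y|^{-s}$ directly via $e^{-\alpha r}/r=\sum$ or an integral over $s$, since for those the ratio monotonicity is exactly what \cite{FFMMM,BCT} prove by Gamma-function recursion. The cleanest choice is probably the Bernstein representation over the powers $|x-y|^{2k-1}$ obtained by expanding $e^{-\alpha r}=\sum_k(-\alpha r)^k/k!$. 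However, these terms come with alternating signs, so the complete-monotonicity (Gaussian) route is the one I would commit to.

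The main obstacle is the inequality for fixed $\tau$: showing that $(\gamma_1(\tau)-\gamma_l(\tau))/((l-1)(l+2))$ decreases in $l$, i.e. a concavity-type statement for $l\mapsto i_l(2\tau)$ relative to $l(l+1)$. I would first try to write $\gamma_1-\gamma_l=\sum_{j=2}^{l}(\gamma_{j-1}-\gamma_j)$ and show that the increments $(\gamma_{j-1}-\gamma_j)/(\lambda_j-\lambda_{j-1})$ are decreasing in $j$. Since $\lambda_j-\lambda_{j-1}=2j$, this requires $i_{j-1}-i_j$ to decay faster than linearly in $j$, which should follow from Segura-type ratio bounds $i_{j}/i_{j-1}\leq 2\tau/(j+\sqrt{j^2+4\tau^2})$ as already used in Lemma~\ref{lem: Eigenvalue Y Properties}. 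Decreasing increments then imply that their averages $d_l$ decrease, by a standard mediant argument. If the increment monotonicity fails for small $j$ and large $\tau$, I would instead verify directly via the variational (Rayleigh quotient) characterization that the Dirichlet-form ratio $[u]_\alpha^2/\|\nabla_\tau u\|^2$ restricted to degree-$l$ harmonics is decreasing.
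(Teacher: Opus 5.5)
Your argument is correct, and it takes a genuinely different route from the paper.

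\textbf{The paper's route.} The paper never decomposes the kernel. Lemma~\ref{lem:characterization_G} identifies $\mu_l^*(\alpha)=4\pi/G(\lambda_l)$, where $G(\tau)$ is the minimal value of $\int_0^\infty((\varphi')^2+\alpha^2\varphi^2)r^2\,dr+\tau\int_0^\infty\varphi^2\,dr$ over profiles with $\varphi(1)=1$. This uses that $Y_\alpha$ is the Green function of $-\Delta+\alpha^2$, together with the Wronskian of $i_l,k_l$. Since $G$ is an infimum of increasing affine functions of $\tau$, it is concave, so $1/G$ is convex. The chord slopes of $1/G$ from $\lambda_1$ then increase, which is exactly your statement that $d_l$ decreases.

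\textbf{Your route.} You write $Y_\alpha$ as a positive Gaussian mixture, explicitly $\rho_\alpha(\tau)=(\pi\tau)^{-1/2}e^{-\alpha^2/(4\tau)}$. You get the Gaussian eigenvalues $4\pi e^{-2\tau}i_l(2\tau)$ from Funk--Hecke, and you prove discrete convexity in $\lambda_l$ one Gaussian at a time.

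\textbf{The step you left open.} You say the increment monotonicity ``should follow'' from Segura's bound, and it does. However, what makes it work is the three-term recurrence, not a decay heuristic. With $z=2\tau$, the identity $i_{j-1}(z)-i_{j+1}(z)=\frac{2j+1}{z}i_j(z)$ shows that $(j+1)(i_{j-1}-i_j)>j(i_j-i_{j+1})$ is equivalent to $i_j(z)/i_{j-1}(z)<z/(z+j)$. This in turn follows from the Segura bound $i_j/i_{j-1}<z/(j+\sqrt{j^2+z^2})$, which the paper already uses for the monotonicity of $\mu_l^*$. So the increments strictly decrease for every $\tau>0$. Your mediant argument and $\rho_\alpha>0$ then finish the proof, and your fallback is unnecessary.

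\textbf{Comparison.} Your route buys generality: it applies to any radial kernel that is a positive superposition of Gaussians (Riesz kernels included, whenever the differences $\mu_1^*-\mu_l^*$ are finite), and it needs no extension problem. The paper's route gives a structural reason tied to the local operator $-\Delta+\alpha^2$. It needs no Bessel ratio bounds beyond the Wronskian identity.
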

	
Let us define the functionals
	\[
		A(\varphi) \defeq \int_0^\infty \Big((\varphi'(r))^2+\alpha^2\varphi^2(r)\Big)\,r^2\dd r, \qquad B(\varphi)\defeq \int_0^\infty \varphi^2(r)\dd r,
	\]
over the class $\mathscr{A}\defeq \big\{\varphi \in AC_{\loc}((0,\infty)) \colon A(\varphi)+B(\varphi) < \infty \big\}$. For any $\tau\geq 0$, let
	\begin{equation}\label{eqn:G}
		G(\tau) \defeq \inf \Big\{A(\varphi)+\tau B(\varphi) \colon \varphi\in\mathscr{A}, \ \varphi(1)=1\Big\}
	\end{equation}
such that $G(\lambda_l)$ is the minimal screened Dirichlet energy among extensions of $Y_l^m$ to $\R^3$ with trace $Y_l^m$ on the unit sphere. 
	\begin{lemma}\label{lem:characterization_G}
		For any $l\geq 1$, $\disp G(\lambda_l) = \frac{1}{\alpha \, i_l(\alpha) k_l(\alpha)}$,
		and the infimum in \eqref{eqn:G} at $\tau=\lambda_l$ is attained at
			\[
				\varphi_l(r) \defeq \begin{dcases*}
						 				\frac{i_l(\alpha r)}{i_l(\alpha)} &  if $0<r\leq 1$, \\
						 				\frac{k_l(\alpha r)}{k_l(\alpha)}	&  if $r \geq 1$.
									\end{dcases*}
			\]
	\end{lemma}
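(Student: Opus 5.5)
The plan is to treat \eqref{eqn:G} as a one-dimensional strictly convex quadratic minimization. The infimum will be identified by solving the Euler--Lagrange equation explicitly, and minimality will then follow by a completing-the-square argument. The Euler--Lagrange equation of $A(\varphi)+\tau B(\varphi)$, away from the constraint point $r=1$, is
\[
-(r^2\varphi')'+(\alpha^2r^2+\tau)\varphi=0 \qquad\text{on }(0,1)\cup(1,\infty).
\]
With $\tau=\lambda_l=l(l+1)$ and the substitution $s=\alpha r$, this is exactly the modified spherical Bessel equation $s^2w''+2sw'-(s^2+l(l+1))w=0$. Its solutions on each interval are spanned by $i_l(\alpha r)$ and $k_l(\alpha r)$.

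\textbf{Step 1: pinning down the solution.} The finiteness of $A+B$ selects the solution on each side. Near $0$ we have $k_l(s)\sim c\,s^{-l-1}$, which makes $B(\varphi)=\int\varphi^2\dd r$ diverge, so on $(0,1)$ only $i_l(\alpha r)$ is allowed. Near $\infty$ the function $i_l$ grows like $e^{s}/s$, so on $(1,\infty)$ only $k_l(\alpha r)$ is allowed. Normalizing by $\varphi(1)=1$ gives exactly $\varphi_l$. We then check that $\varphi_l\in\mathscr{A}$: it is continuous at $1$, behaves like $r^l$ at $0$, and decays exponentially at $\infty$.

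\textbf{Step 2: minimality and the value.} Take any admissible $\varphi$ with $\varphi(1)=1$ and write $\varphi=\varphi_l+\psi$, so that $\psi(1)=0$. Then
\[
A(\varphi)+\lambda_lB(\varphi)=A(\varphi_l)+\lambda_lB(\varphi_l)+A(\psi)+\lambda_lB(\psi)+2\,\mathcal{C}(\varphi_l,\psi),
\]
where $\mathcal{C}(\varphi_l,\psi)=\int_0^\infty\big(r^2\varphi_l'\psi'+(\alpha^2r^2+\lambda_l)\varphi_l\psi\big)\dd r$ is the cross term. We integrate $\mathcal{C}$ by parts separately on $(0,1)$ and $(1,\infty)$ and use the equation. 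This leaves only the boundary terms $[r^2\varphi_l'\psi]$ at $0$, $1^\pm$ and $\infty$. The terms at $1^\pm$ vanish because $\psi(1)=0$. Hence the cross term vanishes, $\varphi_l$ is a minimizer, and since $A(\psi)+\lambda_lB(\psi)>0$ for $\psi\neq0$, it is the unique one.

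To compute the value, we integrate by parts the same way with $\psi$ replaced by $\varphi_l$. This gives
\[
G(\lambda_l)=\varphi_l'(1^-)-\varphi_l'(1^+)=\alpha\Big(\frac{i_l'(\alpha)}{i_l(\alpha)}-\frac{k_l'(\alpha)}{k_l(\alpha)}\Big)=\alpha\,\frac{i_l'(\alpha)k_l(\alpha)-i_l(\alpha)k_l'(\alpha)}{i_l(\alpha)k_l(\alpha)}.
\]
We then invoke the Wronskian identity $i_l'(s)k_l(s)-i_l(s)k_l'(s)=s^{-2}$ for the normalization $i_l=\sqrt{\pi/2s}\,I_{l+1/2}$, $k_l=\sqrt{2/\pi s}\,K_{l+1/2}$. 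This yields $G(\lambda_l)=\alpha\cdot\alpha^{-2}/(i_lk_l)=1/(\alpha\,i_l(\alpha)k_l(\alpha))$, as claimed.

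\textbf{Main obstacle: the boundary terms at $0$ and $\infty$.} The vanishing of the boundary terms at the endpoints requires care, because $\psi$ is only locally absolutely continuous with $\int(r^2\psi'^2+\alpha^2r^2\psi^2+\psi^2)\dd r<\infty$. Instead of taking pointwise limits, we integrate over $(r_n,1)$ and $(1,R_n)$ along well-chosen sequences.

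At $0$: since $\int_0^1\psi^2\dd r<\infty$, we have $\liminf_{r\to0}r\psi(r)^2=0$. Choose $r_n\to0$ with $r_n\psi(r_n)^2\to0$. Then $r_n^2\varphi_l'(r_n)\psi(r_n)\sim r_n^{l+1}\psi(r_n)\to0$ for $l\geq1$.

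At $\infty$: $\int r^2\psi^2\dd r<\infty$ gives $R_n\to\infty$ with $R_n^3\psi(R_n)^2\to0$. Combined with the exponential decay of $\varphi_l'$, the boundary term $R_n^2\varphi_l'(R_n)\psi(R_n)\to0$.

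Dominated convergence applied to the integrals then closes the argument. The same sequences, with $\psi=\varphi_l$, justify the value computation.
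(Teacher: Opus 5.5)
Your proposal is correct and follows the paper's proof essentially step for step. Both arguments complete the square around the Euler--Lagrange solution $\varphi_l$, kill the cross term by integrating by parts separately on $(0,1)$ and $(1,\infty)$, and read off $G(\lambda_l)=\varphi_l'(1^-)-\varphi_l'(1^+)=1/(\alpha\,i_l(\alpha)k_l(\alpha))$ from the Wronskian. The only difference is cosmetic: you control the endpoint terms along $\liminf$ sequences (from $\int\psi^2\,\mathrm{d}r<\infty$ near $0$ and $\int r^2\psi^2\,\mathrm{d}r<\infty$ near $\infty$), whereas the paper uses the pointwise bound $|w(\varepsilon)|\le C_w\varepsilon^{-1/2}$ coming from $A(w)$ and radii $R_j\in[j,2j]$ coming from $B(w)$. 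Both versions work.
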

	
\begin{proof}
Using the behavior of $i_l$ at $r=0$ and $k_l$ at $r=\infty$ we see that $\varphi_l \in \mathscr{A}$. Also, $\varphi_l$ is continuous at $r=1$ and $\varphi_l(1)=1$. Since $i_l$ and $k_l$ solve the modified spherical Bessel equation $x^2u'' +2xu'-\big(x^2+l(l+1)\big)u=0$, $\varphi_l$ solves $\big(r^2\varphi'\big)'=\big(\lambda_l+\alpha^2r^2\big)\varphi$ on $(0,1)$ and $(1,\infty)$, which is exactly the Euler--Lagrange equation of $Q(\varphi)\defeq A(\varphi)+\lambda_l B(\varphi)$.

Now let $\varphi\in\mathscr{A}$ be arbitrary with $\varphi(1)=1$. Define $w\defeq\varphi-\varphi_l$. Then $w\in\mathscr{A}$ with $w(1)=0$, and $Q(\varphi)=Q(\varphi_l)+2I(\varphi_l,w)+Q(w)$, where
	\[
		I(\varphi_l,w) \defeq \int_0^\infty \big(r^2\varphi_l'w'+(\lambda_l+\alpha^2r^2)\varphi_l w\big)\dd r. 
	\]
We claim that $I \equiv 0$. To prove this, first note that since $\varphi_l$ solves the Euler--Lagrange equation on $(0,1)$ and $(1,\infty)$, we have
	\[
		I(\varphi_l,w) = \int_0^1 \big(r^2\varphi_l'w\big)'\dd r + \int_1^\infty \big(r^2\varphi_l'w\big)'\dd r.
	\]
Hence, for $0<\e<1<R$, we get
	\[
		\int_{\e}^R \big(r^2\varphi_l'w\big)'\dd r = R^2 \varphi_l'(R)w(R) - \e^2\varphi_l'(\e)w(\e),
	\]
since $w(1)=0$.

For $0<\e<1/2$ we estimate
	\[
		|w(\e)| \leq |w(1/2)| + \int_{\e}^{1/2} |w'|\dd r \leq |w(1/2)| + A^{1/2}(w)\left(\int_{\e}^{1/2}\frac{\dd r}{r^2}\right)^{1/2} \leq C_w \e^{-1/2}.
	\]
On the other hand, using the properties of $i_l$, $\e^2 \varphi_l'(\e)=O(\e^{l+1})$. Thus $\e^2\varphi_l'(\e)w(\e) = O(\e^{l+1/2})\to 0$ for $l\geq 1$ as $\e\to 0$.

Since $B(w)<\infty$ we have that $w\in L^2((0,\infty))$. So, there exist $R_j \in [j,2j]$ with $w^2(R_j) \leq \frac{1}{j}\int_j^{2j} w^2\dd r \to 0$. Again, a direct calculation using the properties of $k_l$ implies that $R^2\varphi_l'(R)\to 0$ as $R\to \infty$. Thus $R^2 \varphi_l'(R)w(R)\to 0$ as $R\to\infty$. Then the claim follows sending $\e\to 0$ and $R\to \infty$.

As a consequence we obtain that $Q(\varphi)=Q(\varphi_l)+Q(w)$. Since $Q(w) \geq \alpha^2\int_0^\infty w^2 r^2\dd r \geq 0$, and $Q(w)$ vanishes only when $w=0$ we conclude that $\varphi_l$ is the unique minimizer of $Q$.

Therefore, taking $\varphi_l$ instead of $w$,
	\[
		G(\lambda_l) = Q(\varphi_l) = \varphi_l'(1^-) - \varphi_l'(1^+) = \alpha \left(\frac{i_l'(\alpha)}{i_l(\alpha)}-\frac{k_l'(\alpha)}{k_l(\alpha)}\right) =\frac{1}{\alpha\,i_l(\alpha)k_l(\alpha)},
	\]
where we used the fact that 
	\[
		i_l(\alpha)k_l'(\alpha) - i_l'(\alpha)k_l(\alpha) = \frac{1}{\alpha}\big(I_{l+1/2}(\alpha)K_{l+1/2}'(\alpha) - I_{l+1/2}'(\alpha)K_{l+1/2}(\alpha)\big) = -\frac{1}{\alpha^2}
	\]
by direct computation.
\end{proof}

The next lemma establishes the properties of the function $G$; its monotonicity and concavity are key in proving the monotonicity of the eigenvalues.
	\begin{lemma}\label{lem:properties_G}
		The function $G$ is finite, nonnegative and concave on $[0,\infty)$. Moreover we have $G(\lambda_j)<G(\lambda_l)$ whenever $1\leq j < l$.
	\end{lemma}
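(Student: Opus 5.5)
Since $G$ is defined as an infimum over $\varphi\in\mathscr{A}$ with $\varphi(1)=1$ of $A(\varphi)+\tau B(\varphi)$, and each such expression is affine in $\tau$, $G$ is a pointwise infimum of affine functions. It is therefore concave on $[0,\infty)$, as soon as it is finite. Nonnegativity is immediate because $A,B\geq 0$.

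For finiteness I will exhibit one admissible competitor, for instance $\varphi(r)=e^{1-r}$ for $r\geq 1$ and $\varphi(r)=1$ on $(0,1)$. Then $A(\varphi)<\infty$ thanks to the exponential decay and the weight $r^2$. Also $B(\varphi)\leq 1+\int_1^\infty e^{2(1-r)}\dd r<\infty$. Hence $G(\tau)\leq A(\varphi)+\tau B(\varphi)<\infty$ for every $\tau\geq 0$. The function $G$ is also nondecreasing in $\tau$, since $B\geq 0$.

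For the strict inequality $G(\lambda_j)<G(\lambda_l)$ when $1\leq j<l$, I use Lemma~\ref{lem:characterization_G}: the infimum at $\tau=\lambda_l$ is attained at $\varphi_l$, so $G(\lambda_l)=A(\varphi_l)+\lambda_l B(\varphi_l)$. Testing the problem at $\tau=\lambda_j$ with the same $\varphi_l$ gives
\[
G(\lambda_j)\leq A(\varphi_l)+\lambda_j B(\varphi_l)=G(\lambda_l)-(\lambda_l-\lambda_j)B(\varphi_l).
\]
Since $\varphi_l$ is continuous with $\varphi_l(1)=1$, we have $B(\varphi_l)>0$. As $\lambda_l>\lambda_j$, the strict inequality follows.

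The only point needing care is confirming that $\varphi_l\in\mathscr{A}$, in particular that $B(\varphi_l)=\int_0^\infty\varphi_l^2\dd r<\infty$. This uses the behaviour $i_l(\alpha r)\sim r^l$ near $0$ and the exponential decay of $k_l$ at infinity, which is already asserted in the proof of Lemma~\ref{lem:characterization_G}. Alternatively, the strict monotonicity could be deduced from concavity together with strict positivity of the increments, but the direct comparison above suffices, and I expect no real obstacle.
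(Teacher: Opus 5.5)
Your proof is correct and follows the paper's argument essentially verbatim: concavity and monotonicity come from $G$ being an infimum of nondecreasing affine functions of $\tau$, and strictness comes from testing $\varphi_l$ at $\tau=\lambda_j$ to get $G(\lambda_j)\leq G(\lambda_l)-(\lambda_l-\lambda_j)B(\varphi_l)$. The only difference is that for finiteness you use the explicit competitor ($\varphi=1$ on $(0,1)$, $\varphi=e^{1-r}$ on $[1,\infty)$) where the paper uses $\varphi_1$, which has the minor advantage of avoiding any appeal to Bessel-function asymptotics.
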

	
\begin{proof}
Nonnegativity of $G$ clearly follows by its definition. It is also finite since, for example, $A(\varphi_1)+\tau B(\varphi_1) <\infty$. Furthermore the map $\tau \mapsto A(\varphi) +\tau B(\varphi)$ is affine with positive slope. Thus, being a positive infimum of increasing affine functions over a $\tau$-independent index, the function $G$ is increasing and concave. Now let $\varphi_l$ be the minimizer in Lemma~\ref{lem:characterization_G} above. Then
	\[
		G(\lambda_j) \leq A(\varphi_l)+\lambda_j B(\varphi_l) = G(\lambda_l)-(\lambda_l-\lambda_j)B(\varphi_l) < G(\lambda_l),
	\]
as claimed.
\end{proof}

We now prove Proposition \ref{prop:monotonicity_beta*}.

\begin{proof}[Proof of Proposition~\ref{prop:monotonicity_beta*}]
Note that by Lemma~\ref{lem:characterization_G},
	\[
		\mu_l^{\alpha} - \mu_1^{\alpha} = 8\pi \left(\frac{1}{G(\lambda_1)}-\frac{1}{G(\lambda_l)}\right);
	\]
hence, it suffices to prove that
	\begin{equation}\label{eqn:equiv_monotonicity}
		\frac{\lambda_j-\lambda_1}{1/G(\lambda_1)-1/G(\lambda_j)} < \frac{\lambda_l-\lambda_1}{1/G(\lambda_1)-1/G(\lambda_l)}.
	\end{equation}
Let $t\defeq (\lambda_l-\lambda_j)/(\lambda_l-\lambda_1)\in (0,1)$ so that $\lambda_j = t\lambda_1 + (1-t)\lambda_l$. Since $G$ is concave,
	\[
		\frac{1}{G(\lambda_j)} \leq \frac{1}{tG(\lambda_1)+(1-t)G(\lambda_l)}.
	\]
Now, note that
	\begin{multline*}
		\big(tG(\lambda_1)+(1-t)G(\lambda_l)\big)\left(\frac{t}{G(\lambda_1)}+\frac{1-t}{G(\lambda_l)}\right)\\ 
					=t^2 + (1-t)^2 + t(1-t)\left(\frac{G(\lambda_1)}{G(\lambda_l)}+\frac{G(\lambda_l)}{G(\lambda_1)}\right) > 1,
	\end{multline*}
where we used the fact that $x+x^{-1}>2$ for $x\neq 1$. Thus
	\[
		\frac{1}{G(\lambda_j)} < \frac{t}{G(\lambda_1)}+\frac{1-t}{G(\lambda_l)},
	\]
which, in turn, implies that
	\[
		\frac{1}{G(\lambda_1)}-\frac{1}{G(\lambda_j)} > (1-t)\left(\frac{1}{G(\lambda_1)}-\frac{1}{G(\lambda_l)}\right) = \frac{\lambda_j-\lambda_1}{\lambda_l-\lambda_1}\left(\frac{1}{G(\lambda_1)}-\frac{1}{G(\lambda_l)}\right).
	\]
Since all four differences are positive \eqref{eqn:equiv_monotonicity} follows.
\end{proof}


\setlength{\parskip}{0pt plus 1pt}
\bibliographystyle{alpha}
\bibliography{Yukawa-refs}

\end{document}